\newcommand{\bsx}{\boldsymbol{x}}
\newcommand{\bsy}{\boldsymbol{y}}
\newcommand{\bsz}{\boldsymbol{z}}
\newcommand{\bsb}{\boldsymbol{b}}
\newcommand{\bsc}{\boldsymbol{c}}
\newcommand{\bsg}{\boldsymbol{g}}
\newcommand{\bsh}{\boldsymbol{h}}
\newcommand{\bsq}{\boldsymbol{q}}
\newcommand{\bsk}{\boldsymbol{k}}
\newcommand{\bsw}{\boldsymbol{w}}
\newcommand{\bsf}{\boldsymbol{f}}
\newcommand{\bsa}{\boldsymbol{a}}
\newcommand{\bsn}{\boldsymbol{n}}
\newcommand{\bsj}{\boldsymbol{j}}
\newcommand{\bsm}{\boldsymbol{m}}
\newcommand{\bst}{\boldsymbol{t}}
\newcommand{\bsalpha}{\boldsymbol{\alpha}}
\newcommand{\bszero}{\boldsymbol{0}}
\newcommand{\bfn}{\mathbf{n}}
\newcommand{\bfl}{\mathbf{l}}
\newcommand{\bfk}{\mathbf{k}}
\newcommand{\bfx}{\mathbf{x}}
\newcommand{\RR}{\mathbb{R}}
\newcommand{\QQ}{\mathbb{Q}}
\newcommand{\ZZ}{\mathbb{Z}}
\newcommand{\NN}{\mathbb{N}}
\newcommand{\FF}{\mathbb{F}}
\newcommand{\CC}{\mathbb{C}}
\newcommand{\EE}{\mathbb{E}}
\newcommand{\DD}{\mathbb{D}}
\newcommand{\walb}{\,_b{\rm wal}}
\newcommand{\icomp}{\mathtt{i}}
\newcommand{\Dcal}{\mathcal{D}}
\newcommand{\cH}{\mathcal{H}}
\newcommand{\cP}{\mathcal{P}}
\newcommand{\cZ}{\mathcal{Z}}
\newcommand{\cS}{\mathcal{S}}
\newcommand{\uu}{\mathfrak{u}}
\newcommand{\dint}{\,{\rm d}}
\newtheorem{theorem}{Theorem}[section]
\newtheorem{lemma}[theorem]{Lemma}
\newtheorem{proposition}[theorem]{Proposition}
\newtheorem{definition}[theorem]{Definition}
\newcommand{\satop}[2]{\stackrel{\scriptstyle{#1}}{\scriptstyle{#2}}}
\newcommand{\rdots}{\mathinner{\mkern1mu\lower-1\p@\vbox{\kern7\p@\hbox{.}}
\mkern2mu \raise4\p@\hbox{.}\mkern2mu\raise7\p@\hbox{.}\mkern1mu}} \makeatother
\newenvironment{proof}{\begin{trivlist}
    \item[\hskip\labelsep{\it Proof.}]}{$\hfill\Box$\end{trivlist}}
\title{Proof Techniques in Quasi-Monte Carlo Theory}
\author{Josef Dick\thanks{J.~Dick is supported by an Australian Research Council QEII Fellowship.}, Aicke Hinrichs, Friedrich Pillichshammer\thanks{F. Pillichshammer is supported by the Austrian Science Fund (FWF): Project F5509-N26, which is a part of the Special Research Program "Quasi-Monte Carlo Methods: Theory and Applications".}}
\begin{document}

\maketitle 
\label{firstpage}

\begin{abstract}
In this survey paper we discuss some tools and methods which are of use in quasi-Monte Carlo (QMC) theory. We group them in chapters on Numerical Analysis, Harmonic Analysis, Algebra and Number Theory, and Probability Theory. We do not provide a comprehensive survey of all tools, but focus on a few of them, including reproducing and covariance kernels, Littlewood-Paley theory, Riesz products, Minkowski's fundamental theorem, exponential sums, diophantine approximation, Hoeffding's inequality and empirical processes, as well as other tools. We illustrate the use of these methods in QMC using examples.
\end{abstract}

\tableofcontents

% Updated 26 March 2014, JD.

\section{Introduction}\label{sec_intro}

Quasi-Monte Carlo (QMC) rules are quadrature rules which can be used to approximate integrals defined on the $s$-dimensional unit cube $[0,1]^s$
\begin{equation*}
\int_{[0,1]^s} f(\bsx) \,\mathrm{d} \bsx \approx \frac{1}{N} \sum_{n=0}^{N-1} f(\bsx_n),
\end{equation*}
where $\cP=\{\bsx_0,\bsx_1,\ldots,\bsx_{N-1}\}$ are deterministically chosen quadrature points in $[0,1)^s$.
In QMC theory one is interested in a number of questions. Of importance is the integration error
\begin{equation*}
\left| \int_{[0,1]^s} f(\bsx) \,\mathrm{d} \bsx - \frac{1}{N} \sum_{n=0}^{N-1} f(\bsx_n) \right|
\end{equation*}
and how it behaves as $N$ and/or $s$ increases. Various settings can be defined to analyze this error. For instance, one can consider the worst-case error: Here one uses a Banach space $(\cH, \|\cdot\|)$ and considers
\begin{equation*}
{\rm wce}(\cH,\cP)=\sup_{\satop{f \in \cH}{\|f\| \le 1}} \left| \int_{[0,1]^s} f(\bsx) \,\mathrm{d} \bsx - \frac{1}{N} \sum_{n=0}^{N-1} f(\bsx_n) \right|.
\end{equation*}
Particularly nice examples of such function spaces are so-called reproducing kernel Hilbert spaces. We review essential properties of reproducing kernel Hilbert spaces in Section~\ref{sec_numer_anal}. Other settings include the average case error: In this case one defines a probability measure $\mu$ on the function space $\cH$ and then studies the expectation value of the integration error
\begin{equation*}
{\rm ace}_p(\cH,\cP)=\left( \mathbb{E} \left| \int_{[0,1]^s} f(\bsx) \,\mathrm{d} \bsx - \frac{1}{N} \sum_{n=0}^{N-1} f(\bsx_n) \right|^p \right)^{1/p}.
\end{equation*}
Such an investigation can be carried out with the help of covariance kernels. There are a number of relations to reproducing kernels, which we also discuss in Section~\ref{sec_numer_anal}.

Covariance kernels also appear in stochastic processes, which themselves are important in applications in financial mathematics and partial differential equations (PDEs) with random coefficients, for instance. We discuss all these connections in the section on numerical analysis, Section~\ref{sec_numer_anal}, in which we also treat some further useful tools, like the use of bump functions to prove lower bounds and the Rader transform. Also the connection between the integration error and discrepancy of the quadrature points is shown and the Koksma-Hlawka inequality is described in this context.

The analysis of the integration error is often greatly helped by using orthogonal expansions. These can be Fourier series, Walsh series or Haar series for instance. Tools from harmonic analysis are important here. For instance the proof of strong lower bounds is facilitated by using the Littlewood-Paley inequality and Riesz products. We devote a section on harmonic analysis (Section~\ref{sec_har_anal}) to this topic to give the reader an idea of how those methods are applied in QMC.

Another important topic in QMC is the construction of good quadrature points which can be used in computation. This area makes fundamental use of algebra and number theory. Finite fields, characters and duality theory are of importance here, as well as a number of other topics including exponential sums, $b$-adic numbers, and diophantine approximation. These tools are reviewed and illustrated in the context of QMC in Section~\ref{sec_alg_num}.

Although many useful explicit constructions are known based on algebraic and number theoretic methods, in some instance one can show stronger results by switching to methods which only prove the existence of some point sets, rather than explicit constructions. The simplest instance of proving an existence result can be illustrated by the principle that for a given set of real numbers $a_1, a_2, \ldots, a_N$, at least one of those numbers is bounded above by the average $\frac{1}{N} \sum_{n=1}^N a_n$. This can be rephrased in terms of random variables and expectation values and leads to the probabilistic method. There are a number of sophisticated tools available from this area which go much further than the simple averaging argument described above, for instance Hoeffding's inequality, VC-classes and empirical processes. These methods are illustrated in Section~\ref{sec_prob}, which is devoted to the use of probability theory in QMC.

This article does not provide an introduction to QMC theory per se. The main goal is to illustrate the use of the tools mentioned above in QMC theory via some examples. The results in QMC theory which we use to illustrate these ideas are not always the most interesting cases since the emphasis is mainly on the tools and not the QMC results. Often we use results from QMC theory which highlight the concepts from the areas of numerical analysis, harmonic analysis, algebra and number theory and probability theory, and not the particular results from QMC theory. 

The motivation for the approach taken in this paper lies in the fact that introductions to various aspects of QMC theory have already appeared in a number of monographs and major survey articles in recent years. We mention those which are in preparation, to appear or appeared in the last ten years at the writing of this paper in chronological order. Strauch and Porubsk{\'y}~\cite{SP} provide a sampler of results on the distribution of sequences. This book includes many of the older results which are not included in other publications. The series of monographs \cite{NW08, NW10, NW12} by Novak and Wo\'zniakowski is devoted to Information-Based Complexity. QMC plays some role in there since it can be used to show tractability results in high dimensional integration problems. These monographs also provide the necessary background on various settings, from function spaces to different error criteria, which can be used to study QMC methods. Lemieux's work \cite{Lem} discusses Monte Carlo methods, including pseudo 
random number 
generation, QMC and Markov chain Monte Carlo, and various aspects of their use in applications. The monograph \cite{DP10} by Dick and Pillichshammer studies digital nets and sequences. These point sets and sequences can be used in QMC integration. Results on numerical integration and their connection to discrepancy theory are also explained in there. Triebel~\cite{tri2010, tri2014} studies connections of discrepancy theory and numerical integration via the study of function spaces. Another introductory book on Monte Carlo methods is by M\"uller-Gronbach, Novak and Ritter~\cite{MGNR} (in German). It discusses algorithmic aspects, simulation techniques, variance reduction, Markov chain Monte Carlo and numerical integration. The survey article \cite{DKS13} by Dick, Kuo and Sloan focuses on high dimensional numerical integration using QMC rules. Numerical integration in infinite dimensional spaces is also briefly discussed. The textbook \cite{LP} by Leobacher and Pillichshammer provides an introduction to 
QMC theory and discusses applications to various areas. A number of articles covering various aspects of discrepancy theory is provided in the  monograph \cite{CST}, edited by Chen, Srivastav and Travaglini. One of those articles relates discrepancy theory to QMC methods and shows how various parts of discrepancy theory can be used in QMC theory. Also deep results on discrepancy theory are discussed in various articles. Kritzer, Niederreiter, Pillichshammer and Winterhof \cite{KNPW14} are editors of a further book consisting of survey articles focusing on number theoretic constructions of point sets and sequences, uniform distribution theory, and quasi-Monte Carlo methods. Owen~\cite{Owen} is preparing a comprehensive introduction to Monte Carlo methods covering anything from Monte Carlo, quasi-Monte Carlo to Markov chain Monte Carlo, non-uniform random number generation, variance reduction and importance sampling as well as other aspects.

Given that many aspects of QMC theory have been surveyed  or covered in textbooks and research monographs, we aim to provide a survey of proof techniques and tools which are used in QMC theory. Although these tools often appear as part of proofs of theorems in QMC theory, they have usually not been the focus themselves in these other works. We do so here by introducing various methods and illustrating them via examples. 
% Created 8 October 2013; JD
%Modified 3 April; JD

\section{Numerical Analysis}\label{sec_numer_anal}

Numerical integration is a classical topic in numerical analysis. The Koksma-Hlawka inequality is a basic result in QMC theory. Its establishment (1941 in dimension one by Koksma and 1961 in arbitrary dimension by Hlawka) can be considered as a starting point for the analysis of QMC methods. In the modern context, such inequalities can be considered as bounds for worst-case errors in reproducing kernel Hilbert spaces or more general function spaces. Thus reproducing kernel functions play a significant role in studying QMC methods. Reproducing kernel functions themselves have many similarities to covariance kernels. The latter are important when studying average case errors, or problems defined over random fields or stochastic processes. Stochastic processes are for instance used in financial mathematics to model the stock price, or in physical applications to model the permeability of porous media. These applications lead to stochastic differential equations and partial differential equations with 
random coefficients. In some of these applications, QMC is used successfully as a sampling technique to obtain estimations of the expectation value of, for instance, the payoff function of an option or a linear functional of a solution of a PDE. In the following we survey some of the essential tools in this area.

\subsection{Reproducing Kernel Hilbert Spaces}\label{sec_RKHS}

Reproducing kernel Hilbert spaces play a fundamental role in QMC theory nowadays. The basic reference for reproducing kernel Hilbert spaces is \cite{Aron50}. Since we consider QMC in this paper, we restrict the domain to the unit cube $[0,1]^s$. In the following let $\overline{a}$ denote the complex conjugate of a complex number $a \in \mathbb{C}$.

\begin{definition}
A function $K: [0,1]^s \times [0,1]^s \to \mathbb{C}$ is a reproducing kernel if
\begin{enumerate}
\item $K(\bsx, \bsy) = \overline{ K(\bsy, \bsx) }$ for all $\bsx, \bsy \in [0,1]^s$ (\emph{symmetry of $K$}), and
\item for all $a_1, a_2, \ldots, a_N \in \mathbb{C}$ and all $\bsx_1, \bsx_2,\ldots, \bsx_N \in [0,1]^s$ we have
\begin{equation*}
\sum_{n, m = 1}^N a_n \overline{a}_m K(\bsx_n, \bsx_m) \ge 0
\end{equation*}
(\emph{positive semi-definiteness of $K$}).
\end{enumerate}
\end{definition}

A reproducing kernel $K$ uniquely defines a space $\cH_K$ of functions on $[0,1]^s$ and an inner product $\langle \cdot, \cdot \rangle_K$ on $\cH_K$. The corresponding norm is denoted by $\|\cdot\|_K$. The following properties are equivalent to the symmetry and positive semi-definiteness above.
\begin{enumerate}
\item[i)] $K(\cdot, \bsy) \in \cH_K$ for each fixed $\bsy \in [0,1]^s$;
\item[ii)] $\langle f, K(\cdot, \bsy) \rangle_K = f(\bsy)$ for all $\bsy \in [0,1]^s$ and $f \in \cH_K$;
\item[iii)] if $L: [0,1]^s \times [0,1]^s \to \mathbb{R}$ satisfies i) and ii), then $L = K$.
\end{enumerate}

\subsubsection*{Examples: Reproducing kernel Hilbert spaces derived from expansions}

\begin{enumerate}
\item \emph{Polynomial space} \\ In the first example we consider a space $\cH$ of polynomials $f(x) = a_0 + a_1 x + \cdots + a_r x^r$  on the interval $[0,1]$ of degree at most $r$, where $a_i \in \mathbb{C}$. The basic functions are the monomials $x^i$, $0 \le i \le r$, and each polynomial can be represented as a linear combination of these functions. We can define an inner product for polynomials $f_i = a_{i,0} + a_{i,1} x + \cdots + a_{i,r} x^r$ by
\begin{equation*}
\langle f_1, f_2 \rangle_1 = \sum_{\ell=0}^r a_{1,\ell} \overline{a_{2, \ell}}.
\end{equation*}
With this inner product, the monomials $x^i$ are orthonormal, that is
\begin{equation*}
\langle x^i, x^j \rangle_1 = \delta_{i,j},
\end{equation*}
where $\delta_{i,j}$ is the Kronecker $\delta$-symbol.

The task now is to find a function $K_1(x,y):[0,1] \times [0,1] \to \mathbb{C}$ which satisfies the reproducing property $\langle f, K_1(\cdot, y)\rangle_1 = f(y)$. This function is given by $K_1(x,y) = 1 + x \overline{y} + x^2 \overline{y}^2 + \cdots + x^r \overline{y}^r$ as can easily be verified. (Since we assume that $x, y \in [0,1]$, we also have $K_1(x,y) = 1 + x y + x^2 y^2 + \cdots + x^r y^r$ and hence the kernel $K_1 : [0,1] \times [0,1] \to \mathbb{R}$ is actually real-valued.) 

An alternative way of defining an inner product on the space of polynomials of degree at most $r$ is the following approach.
For $i \in \mathbb{N}_0$ let $B_i$ denote the Bernoulli polynomial of degree $i$. Use the expansion $g(x) = b_0 B_0 + b_1 B_1(x) + \cdots + b_r B_r(x)$, where $b_i \in \mathbb{C}$. Again one obtains polynomials of degree at most $r$ this way. We can define the inner product
\begin{equation*}
\langle g_1, g_2 \rangle_2 = \sum_{\ell=0}^r b_{1, \ell} \overline{b_{2, \ell}},
\end{equation*}
for 
\begin{equation}\label{expansion_Bernoulli}
g_i(x) = b_{i,0} + b_{i,1} B_1(x) + \cdots + b_{i,r} B_r(x). 
\end{equation}
This inner product differs from the first case. In fact, now the Bernoulli polynomials are an orthonormal basis $\langle B_i, B_j \rangle_2 = \delta_{i,j}$. The reproducing kernel is now given by $K_2(x,y) = B_0(x) B_0(y) + B_1(x) B_1(y) + \cdots + B_{r}(x) B_r(y)$ (note that the coefficients of the Bernoulli polynomials are all real numbers, hence for $y \in [0,1]$ we have $B_k(y) = \overline{B_k(y)}$).

{\it Caution:} We provide an example where the above principles fail. Consider all polynomials of degree at most $1$ of the form 
\begin{equation}\label{expansion_poly_fail}
f_i(x) = a_{i,0} + a_{i,1} x + b_{i,1} B_1(x).
\end{equation}
One could define the inner product $\langle f_1, f_2 \rangle_3 = a_{1,0} \overline{a_{2,0}} + a_{1,1} \overline{ a_{2,1} } + b_{1,1} \overline{ b_{2,1} }$. However, this is not well defined, since in the expansion \eqref{expansion_poly_fail} the values of $a_{i,0}, a_{i,1}, b_{i,1}$ are not uniquely defined.

\item \emph{Korobov space} \\ This space is a space of Fourier series
\begin{equation*}
f(x) = \sum_{k \in \mathbb{Z}} \widehat{f}(k) \exp(2\pi \icomp k x),
\end{equation*}
where $\icomp=\sqrt{-1}$ and $\widehat{f}(k)=\int_0^1 f(x) \exp(-2 \pi \icomp k x) \, \mathrm{d} x$. For $\alpha > 1/2$ we define an inner product by
\begin{equation*}
\langle f, g \rangle_{K_\alpha} = \sum_{k \in \mathbb{Z}} \widehat{f}(k) \overline{\widehat{g}(k)} \max(1, |k|)^{2 \alpha}.
\end{equation*}
Its reproducing kernel $K_\alpha: [0,1] \times [0,1] \to \mathbb{C}$ is given by
\begin{equation*}
K_\alpha(x, y) = \sum_{k \in \mathbb{Z}} \max(1, |k|)^{- 2 \alpha} \exp(2\pi \icomp k (x-y)).
\end{equation*}
(In fact we have $K_\alpha(x, y) \in \mathbb{R}$ for all $x, y \in [0,1]$.)

\item \emph{Unanchored Sobolev space} \\ The unanchored Sobolev space is the direct sum of the Korobov space and the polynomial space using the Bernoulli expansion \eqref{expansion_Bernoulli}.

For $i=1,2$ let $h_i$ be a function in the Korobov space where $\alpha = 1$ such that $\int_0^1 h_i(x) \,\mathrm{d} x = 0$. Let
\begin{equation*}
f_i(x) = b_{i,0} B_0(x) + b_{i,1} B_1(x) + h_i(x) = b_{i,0} B_0(x) + b_{i,1} B_1(x) + \sum_{k \in \mathbb{Z} \setminus \{0\}} \widehat{h}_i(k) \exp(2\pi \icomp k x),
\end{equation*}
where $B_0(x) = 1$ and $B_1(x) = x-1/2$ are the Bernoulli polynomials. By assuming that $\int_0^1 h_i(x) \,\mathrm{d} x = 0$ this representation is unique, since the constant part is in $b_{i,0}$ and $B_1(x) = x-1/2$ is not in the Korobov space. We can define an inner product by
\begin{equation*}
\langle f_1, f_2 \rangle_K = b_{1,0} \overline{b_{2,0}} + b_{1,1} \overline{ b_{2,1} } + \frac{1}{(2\pi)^2}  \sum_{k \in \mathbb{Z} \setminus \{0\}} \widehat{h}_1(k) \overline{\widehat{h}_2(k)} \, |k|^2.
\end{equation*}
The role of the normalizing factor $(2\pi)^{-2}$ will soon become clear, but has otherwise no bearings on the principles used to define the inner product. The reproducing kernel is given by
\begin{equation*}
K(x,y) = B_0(x)  B_0(y) + B_1(x) B_1(y) + (2\pi)^2 \sum_{k \in \mathbb{Z}\setminus \{0\}} |k|^{-2} \exp(2\pi \icomp k (x-y)).
\end{equation*}

The representation above can be simplified. The inner product is given by
\begin{equation*}
\langle f_1, f_2 \rangle_K = \int_0^1 f_1(x) \,\mathrm{d} x \int_0^1 \overline{f_2(x)} \,\mathrm{d} x + \int_0^1 f_1'(x) \overline{f_2'(x)} \,\mathrm{d} x.
\end{equation*}

For $\ell \in \mathbb{N}$, the Bernoulli polynomial $B_\ell$ has the Fourier series expansion
\begin{equation*}
B_\ell(x) = - \frac{\ell!}{(2\pi \icomp)^\ell} \sum_{k \in \mathbb{Z}\setminus \{0\}} k^{-\ell} \exp(2\pi \icomp k x).
\end{equation*}
Thus we can write the reproducing kernel as $$K(x, y) = 1 + B_1(x) B_1(y) + \tfrac{1}{2} B_2(|x-y|).$$ 

This approach can be extended to smoothness $\alpha > 1$ with $\alpha \in \mathbb{N}$, by using the Korobov space of smoothness $\alpha$ and the space of Bernoulli polynomials of degree up to $\alpha$. (Note that the Bernoulli polynomials of degree $\ell \le \alpha$ are not in the Korobov space of smoothness $\alpha$, thus this approach is well defined.)

\item \emph{Anchored Sobolev space} \\ The anchored Sobolev space is based on the Taylor series expansion with integral remainder
\begin{equation*}
f(y) = f(0) + \int_0^1 f'(t) 1_{[0,y]}(t) \,\mathrm{d} t.
\end{equation*}
We define an inner product by
\begin{equation*}
\langle f, g \rangle_K = f(0) \overline{g(0)} + \int_0^1 f'(t) \overline{g'(t)} \,\mathrm{d} t.
\end{equation*}
The reproducing kernel is given by 
\begin{equation*}
K(x, y) = 1 + \int_0^1 1_{[0,x]}(t) 1_{[0,y]}(t) \,\mathrm{d} t =  1 + \min(x, y).
\end{equation*}

By using the same principle as above but with a Taylor series expansion with integral remainder involving derivatives up to order $r$, we obtain the anchored Sobolev space of order $r$.

\end{enumerate}

\medskip

To define $s$-variate function spaces we can use the $s$-fold tensor product $\cH \otimes \cH \otimes \cdots \otimes \cH$. The reproducing kernel is in this case given by the $s$-fold product of the one-dimensional reproducing kernels, i.e.,
\begin{equation*}
K(\bsx, \bsy) = \prod_{j=1}^s K(x_j, y_j).
\end{equation*}

\subsubsection*{An important property}

The following property, valid for any reproducing kernel Hilbert space, is frequently used in QMC theory. Let $T: \cH \to \mathbb{R}$ be a continuous linear functional. Then the order of inner product and linear functional can always be interchanged, that is
\begin{equation*}
T(\langle f, K(\cdot, \bsx) \rangle_K) = \langle f, T(K(\cdot, \bsx)) \rangle_K.
\end{equation*}
This follows from the Riesz representation theorem. A proof can be found in \cite[Section~2.3.3]{DP10}.

\subsubsection*{Reproducing kernels and the worst-case error}%\label{subsec_rk_wce}

The {\it worst-case integration error} of a {\it QMC rule} $$\frac{1}{N} \sum_{n=0}^{N-1} f(\bsx_n) \ \ \ \mbox{ for } \ f \in \cH$$ based on a point set $\cP=\{\bsx_0,\ldots,\bsx_{N-1}\}$ over a certain function space $\cH$ with norm $\| \cdot \|$  is an important tool for assessing the quality of the quadrature point set. It is defined as
\begin{equation*}
\mathrm{wce}(\cH, \cP) = \sup_{\satop{f \in \cH}{\|f\|} \le 1} \left|\int_{[0,1]^s} f(\bsx) \,\mathrm{d} \bsx - \frac{1}{N} \sum_{n=0}^{N-1} f(\bsx_n) \right|.
\end{equation*}

If $\cH=\cH_K$ is a reproducing kernel Hilbert space, then the worst-case error can be stated explicitly in terms of the reproducing kernel $K$. Indeed we have for any $f \in \cH_K$ that
\begin{align}\label{int_rep_kernel}
\int_{[0,1]^s} f(\bsx) \,\mathrm{d} \bsx - \frac{1}{N} \sum_{n=0}^{N-1} f(\bsx_n)  = &  \int_{[0,1]^s} \langle f, K(\cdot, \bsx) \rangle_K \,\mathrm{d} \bsx - \frac{1}{N} \sum_{n=0}^{N-1} \langle f, K(\cdot, \bsx_n) \rangle_K \nonumber  \\ = &   \left\langle f,  \int_{[0,1]^s} K(\cdot, \bsx)  - \frac{1}{N} \sum_{n=0}^{N-1}  K(\cdot, \bsx_n) \right\rangle_K = \langle f, h \rangle_K,
\end{align}
where
\begin{equation}\label{def_h}
h(\bsy) = \int_{[0,1]^s} K(\bsy, \bsx)\, {\rm d} \bsx - \frac{1}{N} \sum_{n=0}^{N-1}  K(\bsy, \bsx_n).
\end{equation}
Thus, for any function $f \in \cH_K$ with $f \neq 0$ we have
\begin{equation*}
\frac{1}{\|f\|_K} \left| \int_{[0,1]^s} f(\bsx) \,\mathrm{d} \bsx - \frac{1}{N} \sum_{n=0}^{N-1} f(\bsx_n) \right| \le \|h\|_K.
\end{equation*}
On the other hand, we can achieve equality by considering the integration error of the function $h$. Thus we obtain that
\begin{align}
\mathrm{wce}^2(\cH_K, \cP) & = \|h\|_K^2  = \langle h, h \rangle_{K} \label{wce_h} \\ = & \int_{[0,1]^s} \int_{[0,1]^s} K(\bsx, \bsy) \,\mathrm{d} \bsx \,\mathrm{d} \bsy - \frac{2}{N} \sum_{n=0}^{N-1} \int_{[0,1]^s} K(\bsx, \bsx_n) \,\mathrm{d} \bsx + \frac{1}{N^2} \sum_{n, n'=0}^{N-1} K(\bsx_n, \bsx_{n'}). \label{wce_k}
\end{align}

\subsection{Koksma-Hlawka Inequality}\label{subsec_KH_ugl}

The Koksma-Hlawka inequality is a classic bound on the integration error of QMC rules. We give an example of this type of inequality using reproducing kernel Hilbert spaces.

We start by introducing the reproducing kernel $K:[0,1]\times [0,1] \to \mathbb{R}$ given by
\begin{equation*}
K(x,y) = 1 + \int_0^1 1_{[x,1]}(t) 1_{[y,1]}(t) \,\mathrm{d} t = 1 + \min(1-x, 1-y).
\end{equation*}
The inner product in the corresponding reproducing kernel Hilbert space is given by
\begin{equation*}
\langle f, g \rangle_K = f(1) g(1) + \int_0^1 f'(t) g'(t) \,\mathrm{d} t.
\end{equation*}
For dimensions $s > 1$ we use the kernel
\begin{equation*}
K(\bsx, \bsy) = \prod_{j=1}^s K(x_j, y_j).
\end{equation*}
Then the inner product is given by
\begin{equation*}
\langle f, g \rangle_K = \sum_{\uu \subseteq [s]} \int_{[0,1]^{|\uu|}} \frac{\partial^{\uu} f}{\partial \bsx_{\uu}}(\bsx_{\uu}; \boldsymbol{1}) \frac{\partial g}{\partial \bsx_{\uu}}(\bsx_{\uu}; \boldsymbol{1}) \,\mathrm{d} \bsx_{\uu},
\end{equation*}
where $[s]:=\{1,2,\ldots,s\}$ and where for $\uu \subseteq [s]$ and $\bsx=(x_1,x_2,\ldots,x_s)$ we write $\bsx_{\uu}=(x_j)_{j \in \uu}$ and $(\bsx_{\uu}; \boldsymbol{1})=(z_1,z_2,\ldots,z_s)$ with $$z_j=\left\{ 
\begin{array}{ll}
x_j & \mbox{ if } j \in \uu,\\
1 & \mbox{ if } j \not\in \uu. 
\end{array}\right.$$

Eq. \eqref{int_rep_kernel} provides a representation of the integration error in terms of the reproducing kernel. Of essence here is the function $h$, which for our specific reproducing kernel $K$ is given by
\begin{align*}
h(\bsy) = & \prod_{j=1}^s \int_0^1 K(y_j, x_j) \,\mathrm{d} x_j - \frac{1}{N} \sum_{n=0}^{N-1} \prod_{j=1}^s K(y_j, x_{n, j}) \\ = & \prod_{j=1}^s \frac{3-y_j^2}{2} - \frac{1}{N} \sum_{n=0}^{N-1} \prod_{j=1}^s \left(1 + \min(1-y_j, 1-x_{n,j}) \right).
\end{align*}

From \eqref{wce_h} we have that the worst-case error for integration in $\cH_K$ is given by $\|h\|_K$. We now compute this norm explicitly. To do so, we need the partial derivatives
\begin{align*}
\frac{\partial^{\uu} h}{\partial \bsy_{\uu}}(\bsy_{\uu}; \boldsymbol{1}) = & (-1)^{|\uu|+1} \left( \frac{1}{N} \sum_{\bsz\in{\cal P}} {\mathbf 1}_{B_{(\bsy_{\uu}, \boldsymbol{1}) }}(\bsz) -  \, {\rm vol} (B_{(\bsy_{\uu}, \boldsymbol{1}) })\right).
\end{align*}
Here ${\rm vol} (B_{\bsy}) = y_1 \cdots y_s$ denotes the volume of the rectangular box $B_{\bsy}=[0, y_1) \times \ldots \times[0,y_s)$ for $\bsy=(y_1,\ldots,y_s)\in [0,1]^s$ and ${\mathbf 1}_{B_{\bsy}}$ is the characteristic function of the box $B_{\bsy}$.

Thus \eqref{wce_h} implies that
\begin{equation}\label{wce_L2dis}
\mathrm{wce}(\cH_K, \cP) = \left(\sum_{\uu \subseteq [s]} \int_{[0,1]^{|\uu|}} \left( \frac{1}{N} \sum_{\bsz\in{\cal P}} {\mathbf 1}_{B_{(\bsy_{\uu},1)}}(\bsz) -  \, {\rm vol} (B_{(\bsy_{\uu},1)})\right)^2 \,\mathrm{d} \bsy_{\uu} \right)^{1/2}.
\end{equation}

For an $N$-element point set ${\cal P}$ in the $s$-dimensional unit cube $[0,1)^s$ the {\em discrepancy function} 
$D_N$ is defined as
\begin{equation}
\label{eq:_disc}
   D_N({\cal P}, \bsy) := \frac{1}{N} \sum_{\bsz\in{\cal P}} {\mathbf 1}_{B_{\bsy}}(\bsz) -  \, {\rm vol} (B_{\bsy}). 
\end{equation}
The sum in the discrepancy function counts the number of points of ${\cal P}$ contained in $B_{\bsy}$ and the
discrepancy function measures the deviation of this number from the fair number of points $N \, {\rm vol} (B_{\bsy})$ which
would be achieved by a perfect (but impossible) uniform distribution of the points of ${\cal P}$. 

Since \eqref{wce_L2dis} is the $L_2$ norm of the discrepancy function, it is also called the $L_2$ discrepancy of the point set $\cP$. 
The $L_p$ version of the discrepancy function also makes sense and can also be motivated by numerical integration. Using \eqref{int_rep_kernel} we have
\begin{align*}
\int_{[0,1]^s} f(\bsx) \,\mathrm{d} \bsx - \frac{1}{N} \sum_{n=0}^{N-1} f(\bsx_n) = & \langle f, h \rangle_K \\ = & \sum_{\uu \subseteq [s]} (-1)^{|\uu|+1} \int_{[0,1]^{|\uu|}} \frac{\partial^{\uu} f}{\partial \bsx_{\uu}}(\bsx_{\uu}; \boldsymbol{1}) D_N(\cP, (\bsx_{\uu}; \boldsymbol{1})) \,\mathrm{d} \bsx_{\uu}.
\end{align*}
Taking the absolute value and applying H\"older's inequality for integrals and sums, we obtain that
\begin{equation}\label{KH_ugl}
\left|\int_{[0,1]^s} f(\bsx) \,\mathrm{d} \bsx - \frac{1}{N} \sum_{n=0}^{N-1} f(\bsx_n) \right| \le L_{p,q}(\cP) \|f\|_{p',q'},
\end{equation}
where $1/p + 1/p' = 1$ and $1/q + 1/q'=1$,
\begin{equation*}
L_{p,q}(\cP) = \left(\sum_{\uu \subseteq [s]} \left( \int_{[0,1]^{|\uu|}} \left( D_N(\cP,(\bsy_{\uu};\boldsymbol{1}))\right)^{q} \,\mathrm{d} \bsy_{\uu} \right)^{p/q} \right)^{1/p}
\end{equation*}
and
\begin{equation*}
\|f\|_{p',q'}=  \left(\sum_{\uu \subseteq [s]} \left(\int_{[0,1]^{|\uu|}} \left|\frac{\partial^{\uu} f}{\partial \bsx_{\uu}}(\bsx_{\uu}; \boldsymbol{1}) \right|^{q'} \,\mathrm{d} \bsx_{\uu} \right)^{p'/q'} \right)^{1/p'},
\end{equation*}
with the obvious modifications if $p, p', q$ or $q'$ are $\infty$. The error estimate \eqref{KH_ugl} is called a {\it Koksma-Hlawka inequality}. In its classical form it uses $q=p=\infty$ and the variation of $f$ in the sense of Hardy and Krause instead of the norm $\|f\|_{1,1}$ (see, e.g., \cite{kuinie}).

Let $\cH$ be a normed function space which contains the discrepancy function of any point set. 
Then we denote the norm of the discrepancy function $D_N({\cal P}, \,\cdot\,)$, as defined in \eqref{eq:_disc}, by $D_N({\cal P}, \cH)$.
For $0<p\le\infty$, the (quasi)-norm $D_N({\cal P}, L_p)$ is called the {\it $L_p$-discrepancy} of the point set ${\cal P}$.
In particular, we abbreviate $$D_N({\cal P}) = D_N({\cal P}, L_\infty) = \sup_{\bsx\in [0,1]^s} \big| D_N({\cal P}, x) \big|,$$ which is often called the {\it star discrepancy} of $\cP$.

\subsection{Jensen's Inequality}

Jensen's inequality is an important tool to show improved error convergence rates.

\begin{theorem}[Jensen's inequality]\label{jens}
For any $\lambda \in (0,1]$ and nonnegative reals $a_k$ we have  
\begin{equation*}
\left(\sum_k a_k\right)^{\lambda} \le \sum_k a_k^{\lambda}.
\end{equation*} 
\end{theorem}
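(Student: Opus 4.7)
The plan is to normalize the sum and reduce the statement to the elementary observation that for $t \in [0,1]$ and $\lambda \in (0,1]$ we have $t^\lambda \ge t$ (since $t^{1-\lambda} \le 1$). This single pointwise inequality, once summed, essentially gives everything.

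First I would dispose of trivial cases: if $\lambda = 1$ the inequality is an equality, and if all $a_k$ vanish both sides are zero. Otherwise set $S := \sum_k a_k > 0$; if the index set is infinite I would first prove the inequality for finite partial sums and pass to the limit only at the end. Put $t_k := a_k/S \in [0,1]$, so that $\sum_k t_k = 1$.

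Next, I would apply the bound $t_k^\lambda \ge t_k$ term by term and sum to obtain $\sum_k t_k^\lambda \ge \sum_k t_k = 1$. Multiplying through by $S^\lambda$ gives $\sum_k a_k^\lambda \ge S^\lambda = \left(\sum_k a_k\right)^\lambda$, which is exactly the claim. For an infinite index set, applying this finite version to the partial sums $\sum_{k \le N} a_k$ and letting $N \to \infty$ finishes the argument: the right-hand side converges (possibly to $+\infty$) by monotone convergence, and the left-hand side converges by continuity of $t \mapsto t^\lambda$ on $[0,\infty)$.

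There is no genuine obstacle in this proof; the entire content is the remark that the curve $t \mapsto t^\lambda$ lies above the identity on $[0,1]$ when $\lambda \in (0,1]$, and homogenization does the rest. The only minor technical care concerns the infinite-sum case, which is handled cleanly by truncation. An alternative route would be to establish the two-summand version $(a+b)^\lambda \le a^\lambda + b^\lambda$ first (by the same normalization, with $a/(a+b), b/(a+b) \in [0,1]$) and then iterate by induction on the number of summands, but the direct argument above avoids the induction entirely.
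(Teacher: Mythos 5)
Your proof is correct and is essentially identical to the paper's: both normalize by $S=\sum_k a_k$, use the pointwise inequality $t \le t^{\lambda}$ for $t\in[0,1]$ and $\lambda\in(0,1]$, sum over $k$, and rescale by $S^{\lambda}$. The only difference is that you explicitly treat the trivial cases and the passage to infinite index sets, which the paper leaves implicit.
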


\begin{proof}
We have $0 \le a_j/\left(\sum_{k} a_k\right) \le 1$ and hence, since $\lambda \in (0,1]$, we have $$\frac{a_j}{\sum_k a_k} \le \left(\frac{a_j}{\sum_k a_k}\right)^{\lambda}.$$ Summation over all $j$ implies $$1= \frac{\sum_j a_j}{\sum_k a_k} \le \frac{\sum_j a_j^\lambda}{\left(\sum_k a_k \right)^{\lambda}}$$ which finally yields the result.
\end{proof} 

As an example we consider the worst-case error of lattice rules in the Korobov space $\cH_{K_\alpha}$ with smoothness parameter $\alpha>1/2$ (see Section~\ref{sec_RKHS}). Let $N$ be a prime number. For an $N$-element lattice rule with generating vector $\bsg \in \ZZ^s$ (see Section~\ref{secLPS}) it can be shown that the worst-case error is given by
\begin{equation}\label{wce_sum}
{\rm wce}^2(\cH_{K_\alpha},\cP(\boldsymbol{g},N)) = \sum_{\boldsymbol{h} \in L^\perp \setminus \{\boldsymbol{0}\}} r_\alpha(\boldsymbol{h}),
\end{equation}
where $$L^\bot = \{ \boldsymbol{h} \in \mathbb{Z}^s: \boldsymbol{h} \cdot \boldsymbol{g} \equiv 0 \pmod{N}\}$$ is the so-called dual lattice (see also \eqref{def_duallatt_rank1} in Section~\ref{grchardual}), $r_\alpha(\bsh)=\prod_{j=1}^s r_\alpha(h_j)$, for $\bsh=(h_1,\ldots,h_s) \in  \mathbb{Z}^s$ and where for $h \in \mathbb{Z}$
\begin{equation*}
r_\alpha(h) = \left\{ 
\begin{array}{ll}
1 & \mbox{ if }  h=0,\\
|h|^{-2 \alpha} & \mbox{ if } h \not=0.
\end{array}
\right. 
\end{equation*}

A simple principle in showing the existence of a mathematical object with a certain property is to prove a bound on the average and then to conclude that there is at least one instance which is at least as good as average. In our context we average the squared worst-case error ${\rm wce}^2(\cH_{K_\alpha},\cP(\boldsymbol{g},N))$ over all lattice rules from a certain finite set of lattice rules and deduce that there must exist at least one lattice rule for which the squared worst-case error is as good as the upper bound on this average. Let $G_N=\{1,2,\ldots,N-1\}$. Then it can be shown that
\begin{align}\label{av_wc_kor}
\frac{1}{(N-1)^s} \sum_{\boldsymbol{g} \in G_N^s} {\rm wce}^2(\cH_{K_\alpha},\cP(\boldsymbol{g},N)) \le \frac{(1+2 \zeta(2 \alpha))^s}{N-1},
\end{align}
where $\zeta(x)=\sum_{j=1}^{\infty} j^{-x}$ is the Riemann zeta function. Hence there must exist a generating vector $\bsg_\ast \in G_N^s$ which satisfies $${\rm wce}^2(\cH_{K_\alpha},\cP(\boldsymbol{g}_\ast,N)) \le \frac{(1+2 \zeta(2 \alpha))^s}{N-1}.$$ This yields a convergence rate for the worst-case error in $\cH_{K_\alpha}$ of order $O(N^{-1/2})$. The problem with this bound is that it does not reflect the smoothness $\alpha$ of the considered function space. This problem can be overcome with the help of Jensen's inequality which, by applying it to \eqref{wce_sum}, implies that
\begin{equation}\label{e_Jensen}
[{\rm wce}^2(\cH_{K_\alpha},\cP(\boldsymbol{g},N))]^\lambda \le {\rm wce}^2(\cH_{K_{\alpha \lambda}},\cP(\boldsymbol{g},N)) \ \ \ \mbox{ for all } \ \frac{1}{2\alpha} < \lambda \le 1,
\end{equation}
where the restriction $\frac{1}{2\alpha} < \lambda$ is added to ensure that ${\rm wce}^2(\cH_{K_{\alpha \lambda}},\cP(\boldsymbol{g},N))$ is finite.

Now we can apply the same averaging principle as above to ${\rm wce}^2(\cH_{K_{\alpha \lambda}},\cP(\boldsymbol{g},N))$. This implies for given $\frac{1}{2\alpha} < \lambda \le 1$ the existence of a generating vector $\bsg_\ast \in G_N^s$ which satisfies $${\rm wce}^2(\cH_{K_{\alpha \lambda}},\cP(\boldsymbol{g}_\ast,N)) \le \frac{(1+2 \zeta(2 \alpha \lambda))^s}{N-1}.$$ Inserting this result into \eqref{e_Jensen} one obtains the existence of a generating vector $\bsg_\ast \in G_N^s$ which satisfies $${\rm wce}^2(\cH_{K_\alpha},\cP(\boldsymbol{g}_\ast,N)) \le \frac{(1+2 \zeta(2 \alpha \lambda))^{s/\lambda}}{(N-1)^{1/\lambda}}.$$ In fact, let $\bsg_\ast$ be the generating vector which minimizes the worst-case error, that is, $${\rm wce}^2(\cH_{K_\alpha},\cP(\boldsymbol{g}_\ast,N)) = \min_{\bsg \in G_N^s} {\rm wce}^2(\cH_{K_\alpha},\cP(\boldsymbol{g},N)).$$ Then $${\rm wce}^2(\cH_{K_\alpha},\cP(\boldsymbol{g}_\ast,N)) \le \frac{(1+2 \zeta(2 \alpha \lambda))^{s/\lambda}}{(N-1)^{1/\lambda}} \quad \mbox{for all } \lambda \in \left(\tfrac{1}{2\alpha},  1\right],$$ i.e. $\boldsymbol{g}_\ast$ does not depend on $\lambda$. Since $\lambda$ can be chosen arbitrary close to $\frac{1}{2\alpha}$ this leads to an improved convergence rate for the worst-case error in $\cH_{K_\alpha}$ of order $O(N^{-\alpha+\varepsilon})$ for arbitrary small $\varepsilon>0$.

We point out that Jensen's inequality holds more generally for concave functions (or convex functions in the opposite direction). Let $\phi(x) = x^\lambda$ with $0 < \lambda \le 1$, then we can write the above form of Jensen's inequality as
\begin{equation}\label{Jensen_gen}
\phi\left(\sum_k a_k \right) \le \sum_k \phi(a_k).
\end{equation}
In particular, \eqref{Jensen_gen} holds also for concave functions $\phi:[0,\infty) \to [0, \infty)$. Considering the above example of the Korobov space, the general version of Jensen's inequality can be useful, for instance, when the endpoint $\lambda = \frac{1}{2\alpha}$ is of interest; i.e. when one aims for a bound on the worst-case error of the form $N^{-\alpha} (\log N)^c$, for some $c > 0$. In this case one may use $\phi$ which maps $|h|^{-2\alpha}$ to $|h|^{-1} (\log  2|h| )^{-\lambda}$ for some suitable choice of $\lambda > 1$. Or consider the case where $r_\alpha(h) = |h|^{-1} (\log 2 |h|)^\alpha$ for $h \neq 0$ and $\alpha > 1$. Then the corresponding Korobov space is still well defined, however, the inequality in Theorem~\ref{jens} cannot be used to yield an improved rate of convergence. However a different choice of $\phi$ does yield a convergence rate beyond $O(N^{-1/2})$. This case has been studied in \cite{DKLP14}.

\subsection{Mercer's Theorem}\label{subsec_Mercer}

In the examples of reproducing kernel Hilbert spaces we have seen that some expansions of functions (polynomials or Fourier series for instance) yield reproducing kernel Hilbert spaces in a natural way. One may ask whether such expansions exist for any reproducing kernel (i.e. any symmetric and positive semi-definite function). An affirmative answer to this question for continuous reproducing kernels is given by Mercer's theorem.

Let $K: [0,1]^s \times [0,1]^s \to \mathbb{C}$ be a reproducing kernel. Assume that $K$ is continuous. We define the linear operator $T_K: L_2([0,1]^s) \to L_2([0,1]^s)$ by
\begin{equation*}
T_K(f)(\bsx) = \int_{[0,1]^s} K(\bsx, \bsy) f(\bsy) \,\mathrm{d} \bsy.
\end{equation*}
Then $T_K$ is a self-adjoint, positive, compact operator on $L_2([0,1]^s)$. In the following we state a version of Mercer's theorem~\cite{Mercer} which we adapt to our situation.
\begin{theorem}[Mercer]
Let the reproducing kernel $K:[0,1]^s \times [0,1]^s \to \mathbb{C}$ be a continuous function. Then there exists a sequence of $L_2$ orthonormal eigenfunctions $\psi_{\ell}: [0,1]^s \to \mathbb{C}$, $\ell \in \mathbb{N}$, with corresponding nonnegative eigenvalues $(\lambda_\ell)_{\ell=1}^\infty$ of the operator $T_K$
\begin{equation*}
T_K (\psi_\ell)(\bsx) = \lambda_\ell \psi_\ell(\bsx) \quad \mbox{for all } \ell \in \mathbb{N}.
\end{equation*}
The reproducing kernel $K$ has the representation
\begin{equation*}
K(\bsx, \bsy) = \sum_{\ell=1}^\infty \lambda_\ell \psi_\ell(\bsx) \overline{\psi_\ell(\bsy)}.
\end{equation*}
\end{theorem}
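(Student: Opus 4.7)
The plan is to apply the spectral theorem for compact self-adjoint operators to $T_K$ and then upgrade the resulting $L_2$-expansion to a genuine pointwise identity by using the continuity of $K$ together with the positive semi-definiteness of the remainder after truncation.

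First I would verify the operator-theoretic hypotheses. Since $K$ is continuous on the compact set $[0,1]^s\times[0,1]^s$, it is bounded and in $L_2$, so $T_K$ is Hilbert--Schmidt and in particular compact. The symmetry $K(\bsx,\bsy)=\overline{K(\bsy,\bsx)}$ makes $T_K$ self-adjoint on $L_2([0,1]^s)$, and the defining positive semi-definiteness of $K$, upgraded from finite sums to integrals by approximating $\int\int K(\bsx,\bsy)f(\bsx)\overline{f(\bsy)}\dint\bsx\dint\bsy$ with Riemann sums, gives $\langle T_K f,f\rangle_{L_2}\ge 0$ for every $f\in L_2([0,1]^s)$. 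The spectral theorem for compact self-adjoint operators then provides an $L_2$-orthonormal system of eigenfunctions $\psi_\ell$ with real eigenvalues $\lambda_\ell$, and the positivity of $T_K$ forces $\lambda_\ell\ge 0$. Moreover, each eigenfunction with $\lambda_\ell>0$ satisfies $\psi_\ell=\lambda_\ell^{-1}T_K\psi_\ell$, and because $T_K$ sends $L_2$ to the space of continuous functions (by continuity of $K$ and dominated convergence), every such $\psi_\ell$ is continuous on $[0,1]^s$.

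Next I would set $K_n(\bsx,\bsy):=\sum_{\ell=1}^n\lambda_\ell\psi_\ell(\bsx)\overline{\psi_\ell(\bsy)}$ and study the residual $R_n:=K-K_n$. A direct computation shows that $R_n$ is again the kernel of an integral operator, namely the restriction of $T_K$ to the orthogonal complement of $\mathrm{span}\{\psi_1,\ldots,\psi_n\}$; in particular, $R_n$ is symmetric, continuous, and positive semi-definite in the sense of the definition. Plugging $\bsx_1=\cdots=\bsx_N=\bsx$ into the defining inequality gives the crucial diagonal bound
\begin{equation*}
\sum_{\ell=1}^n\lambda_\ell|\psi_\ell(\bsx)|^2\le K(\bsx,\bsx)\qquad\text{for all }\bsx\in[0,1]^s.
\end{equation*}
Thus the partial sums $s_n(\bsx):=\sum_{\ell=1}^n\lambda_\ell|\psi_\ell(\bsx)|^2$ form a monotone increasing sequence of continuous functions bounded above by the continuous function $K(\bsx,\bsx)$, so they converge pointwise to some limit; by Dini's theorem the convergence is uniform on $[0,1]^s$, provided I can show the limit equals $K(\bsx,\bsx)$ and is continuous. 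The identification of the limit with $K(\bsx,\bsx)$ follows from the $L_2$ spectral expansion of $T_K$ combined with the reproducing-type identity $K(\bsx,\bsx)=\sum_\ell\lambda_\ell|\psi_\ell(\bsx)|^2$ holding almost everywhere, and then everywhere by continuity after a standard approximation argument.

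Finally I would promote this diagonal uniform convergence to uniform convergence of the bilinear series. The Cauchy--Schwarz inequality gives
\begin{equation*}
\bigg|\sum_{\ell=m+1}^n\lambda_\ell\psi_\ell(\bsx)\overline{\psi_\ell(\bsy)}\bigg|\le\bigg(\sum_{\ell=m+1}^n\lambda_\ell|\psi_\ell(\bsx)|^2\bigg)^{1/2}\bigg(\sum_{\ell=m+1}^n\lambda_\ell|\psi_\ell(\bsy)|^2\bigg)^{1/2},
\end{equation*}
and uniform smallness of the diagonal tails yields a uniform Cauchy condition for $K_n(\bsx,\bsy)$, so $K_n$ converges uniformly to a continuous function $\widetilde{K}$. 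Since the expansion holds in $L_2([0,1]^s\times[0,1]^s)$ by the spectral theorem, we have $\widetilde{K}=K$ almost everywhere, and continuity of both sides upgrades this to equality everywhere, giving the stated representation. The main obstacle is precisely this last upgrade from $L_2$-convergence of the kernel expansion to pointwise (in fact uniform) convergence; without continuity of $K$ and the positive semi-definiteness of the residual $R_n$, Dini's theorem is unavailable and one has no better than $L_2$-sense equality.
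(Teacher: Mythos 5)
The paper itself offers no proof of this statement: it simply states the adapted version and cites Mercer's original article. So the only question is whether your argument stands on its own. Its architecture is the standard proof of Mercer's theorem and is sound in outline: $T_K$ is compact, self-adjoint and positive, the spectral theorem yields the orthonormal eigenfunctions with $\lambda_\ell\ge 0$, eigenfunctions with $\lambda_\ell>0$ are continuous, the residual kernel $R_n$ is positive semi-definite and gives the diagonal bound $\sum_{\ell\le n}\lambda_\ell|\psi_\ell(\bsx)|^2\le K(\bsx,\bsx)$, and the final Cauchy--Schwarz step correctly converts uniform convergence on the diagonal into uniform convergence of the bilinear series.

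There is, however, a genuine gap at the pivotal step, namely the identification of $\lim_n\sum_{\ell\le n}\lambda_\ell|\psi_\ell(\bsx)|^2$ with $K(\bsx,\bsx)$. You propose to get the identity ``almost everywhere'' from the $L_2$ spectral expansion of the kernel; but the Hilbert--Schmidt expansion converges in $L_2([0,1]^s\times[0,1]^s)$, and the diagonal $\{(\bsx,\bsx)\}$ is a null set of the product space, so an $L_2$-identity on the square says nothing about its restriction to the diagonal. Moreover, Dini's theorem cannot be invoked before the limit is known to be the continuous function $K(\bsx,\bsx)$, so as written the argument is circular at exactly the point where Mercer's theorem is nontrivial. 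The standard repair is to argue with $\bsx$ fixed: the coefficients of $K(\bsx,\cdot)$ in the eigenbasis are $\lambda_\ell\psi_\ell(\bsx)$, and $K(\bsx,\cdot)$ has no component in $\ker T_K$ (if $T_Kg=0$ then $\int K(\bsx,\bsy)g(\bsy)\,\mathrm{d}\bsy=0$ for a.e.\ $\bsx$, hence for all $\bsx$ since $T_Kg$ is continuous), so $K(\bsx,\cdot)=\sum_\ell\lambda_\ell\psi_\ell(\bsx)\overline{\psi_\ell(\cdot)}$ in $L_2(\mathrm{d}\bsy)$ for each fixed $\bsx$. Your diagonal bound at $\bsx$, Bessel's inequality in $\bsy$, and Cauchy--Schwarz then show this series converges uniformly in $\bsy$ to a continuous function, which therefore equals $K(\bsx,\bsy)$ for every $\bsy$; setting $\bsy=\bsx$ gives the pointwise diagonal identity, after which Dini and your concluding Cauchy--Schwarz argument go through exactly as you describe.
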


\subsubsection*{Examples}

We now show some examples of reproducing kernels and their expansions. We have already seen an example where the eigenvalues and eigenfunctions are obvious:
\begin{enumerate}
\item \emph{Korobov space} \\
The reproducing kernel is given by $K_{\alpha}(x,y) = \sum_{k \in \mathbb{Z}} \max(1, |k|)^{-2\alpha} \exp(2\pi \icomp k (x-y))$; here the eigenvalues are $(\max(1, |k|)^{-2\alpha} )_{k \in \mathbb{Z}}$ and the eigenfunctions are $\exp(2\pi \icomp k x)$ for $k \in \mathbb{Z}$.

\medskip
\hspace{-1cm}
We consider now the unanchored and anchored Sobolev spaces.

\item \emph{Unanchored Sobolev space} \\ The eigenvalues and eigenfunctions of the reproducing kernel $K(x,y) = 1 + B_1(x) B_1(y) + \tfrac{1}{2} B_2(|x-y|)$ have been found in \cite{DNP}. The eigenvalues are $1, \pi^{-2}, (2\pi)^{-2}, (3\pi)^{-2}, \ldots$ and the eigenfunctions are $1, \sqrt{2} \cos(\pi x), \sqrt{2} \cos(2\pi x), \sqrt{2} \cos(3\pi x), \ldots$.

\item \emph{Anchored Sobolev space} \\ The eigenvalues and eigenfunctions of the reproducing kernel $K(x,y) = 1 + \min(x, y)$ have been found in \cite{WW99}. The eigenvalues are $\lambda_\ell = \alpha_\ell^{-2}$ for all $\ell \in \mathbb{N}$, where $\alpha_\ell \in ( (\ell-1) \pi, \ell \pi)$ is the unique solution of the equation
\begin{equation*}
\tan \alpha_\ell = \frac{1}{\alpha_\ell}.
\end{equation*}
\end{enumerate}

\subsubsection*{Example of the derivation of eigenvalues and eigenfunctions}

We consider another related example where we derive the eigenvalues and eigenfunctions via a solution to an ODE. Namely, consider the function
\begin{equation}\label{kernel_W}
K(x,y) = \min(x, y).
\end{equation}
This function is symmetric and positive semi-definite and therefore a reproducing kernel. We are interested in obtaining the eigenvalues and eigenfunctions of the operator
\begin{equation*}
T_K(f)(x) = \int_0^1 K(x,y) f(y) \,\mathrm{d} y = \int_0^1 \min(x, y) f(y) \,\mathrm{d} y.
\end{equation*}
Let $\lambda_\ell$ be an eigenvalue and $\psi_\ell$ the corresponding eigenfunction. Then
\begin{equation*}
\lambda_\ell \psi_\ell(x) = \int_0^1 \min(x, y) \psi_\ell(y) \,\mathrm{d} y = \int_0^x y \psi_\ell(y) \,\mathrm{d} y + \int_x^1 x \psi_\ell(y) \,\mathrm{d} y.
\end{equation*}
By setting $x = 0$ we obtain
\begin{equation*}
 \lambda_\ell \psi_\ell(0) = 0.
\end{equation*}
By differentiating with respect to $x$ we obtain
\begin{equation*}
\lambda_\ell \psi'_\ell(x) = \int_x^1 \psi_\ell(y) \,\mathrm{d} y.
\end{equation*}
Setting $x = 1$ in the above equation yields $\lambda_\ell \psi'_\ell(1) = 0$. By twice differentiating with respect to $x$ we obtain
\begin{equation*}
\lambda_\ell \psi_\ell''(x)= - \psi_\ell(x).
\end{equation*}
The function $\psi_\ell$ which satisfies the two boundary conditions and the last ODE is given by
\begin{equation*}
\psi_\ell(x) = \sqrt{2} \sin\left(\left(\ell - \frac{1}{2} \right) \pi x \right)
\end{equation*}
with corresponding eigenvalue $$\lambda_\ell = \left(\left(\ell-\frac{1}{2} \right) \pi\right)^{-2}$$ for $\ell \in \mathbb{N}$. The normalizing factor $\sqrt{2}$ is introduced, such that the functions $\psi_\ell$ are $L_2$ orthonormal.

Thus the reproducing kernel \eqref{kernel_W} can be written as
\begin{equation*}
K(x,y) = \sum_{\ell=1}^\infty \frac{\sqrt{2} \sin((\ell-1/2) \pi x)}{(\ell-1/2)\pi} \frac{\sqrt{2} \sin((\ell-1/2) \pi y)}{(\ell-1/2) \pi}.
\end{equation*}
Functions $f_i$ in the corresponding reproducing kernel Hilbert space $\cH_K$ have an expansion of the form
\begin{equation}\label{fi_Wiener}
f_i(x) = \sum_{\ell=1}^\infty \widehat{f}_i(\ell) \sqrt{2} \sin((\ell-1/2) \pi x),
\end{equation}
where
\begin{equation*}
\widehat{f}_i(\ell) = \int_0^1 f_i(x) \sqrt{2} \sin ( (\ell - 1/2) \pi x) \, {\rm d} x,
\end{equation*}
and the inner product is given by
\begin{equation*}
\langle f_1, f_2 \rangle_K = \sum_{\ell=1}^\infty \widehat{f}_1(\ell) \overline{\widehat{f}_2(\ell)} (\ell-1/2)^2 \pi^2.
\end{equation*}

\subsection{Covariance Kernel}

The covariance kernel has many similarities with the reproducing kernel. We restrict ourselves again to the domain $[0,1]^s$. A {\it covariance kernel} $C:[0,1]^s \times [0,1]^s \to \mathbb{R}$ is again a symmetric and positive semi-definite function (and is therefore also a reproducing kernel).

In QMC theory, the covariance kernel has two different uses. One is the study of the so-called average-case error and the other appears in the study of PDEs with random coefficients, where the covariance kernel describes the underlying random coefficients (or random field). These two cases are based on different interpretations of the covariance kernel.

%In the following we use the term `stochastic process' in an informal manner without giving a definition of what a stochastic process is. However, we will provide some concrete examples below. More information on stochastic processes, martingales and stochastic differential equations can for instance be found in \cite{KS, RW1, RW2}.

\begin{enumerate}
\item \emph{Random function} \\ Let $\cH$ be a function class defined on $[0,1]^s$ and $\mathcal{B}(\cH)$ be a $\sigma$ algebra on $\cH$. Further let $\mu$ be a probability measure defined on $(\cH, \mathcal{B}(\cH))$. Then we define the covariance kernel $C:[0,1]^s \times [0,1]^s \to \mathbb{R}$ by
\begin{equation*}
C(\bsx, \bsy) = \int_{\cH} f(\bsx) f(\bsy) \, \mu(\mathrm{d} f).
\end{equation*}
That is, the covariance kernel is the expectation value over all functions in the class $\cH$ evaluated at the points $\bsx$ and $\bsy$. The functions $f \in \cH$ themselves are not random variables, but we choose $f \in \cH$ randomly, i.e., once a function $f$ is chosen it is entirely deterministic.

\item \emph{Stochastic processes and random fields}

In general, a stochastic process is a parameterized collection of random variables $\{X_{t}: t \in T\}$ defined on a probability space $(\Omega, \mathcal{F}, \mathbb{P})$ which assumes values in a measurable space $(S, \Sigma)$ and which is indexed by a totally ordered set $T$. A random field is also a parameterized collection of random variables $\{Z(\bsx): \bsx \in X\}$ defined on a probability space $(\Omega, \mathcal{F}, \mathbb{P})$ which assumes values in a measurable space $(S, \Sigma)$ but which is now indexed by a topological set $X$. As an intuitive guideline, in a stochastic process the parameter $t$ can be thought of as time, whereas in a random field one thinks of the parameter $\bsx$ as a location in space.

Here we will consider random fields $Z(\bsx)$, where $\bsx \in [0,1]^s$ and $S = \mathbb{R}$. Note that for $s=1$ $Z(x)$ can be thought of as a stochastic process or also a random field. The covariance kernel gives the covariance of the values of the random field $Z(\bsx)$ at the locations $\bsx, \bsy \in [0,1]^s$
\begin{equation*}
C(\bsx, \bsy) = \mathrm{cov}(Z(\bsx), Z(\bsy)).
\end{equation*}
For each value $\bsx$ in the domain $[0,1]^s$, the values $Z(\bsx)$ are random variables with some given distribution. More information on stochastic processes, martingales and stochastic differential equations can for instance be found in \cite{KS, RW1, RW2}.
\end{enumerate}

In the remainder of this subsection we deal with the first case of random functions.

\subsubsection*{Example: Continuous functions and the Wiener sheet measure}

A classic result in QMC theory is concerned with the average case error of the set of continuous functions which vanish at $0$ endowed with the Wiener sheet measure \cite{Woz91}. 

We give an example of how one can define a probability measure on a function space $\cH$. Let $\cH$ be the class of functions given by
\begin{equation*}
\cH = \left\{f:[0,1]\to\mathbb{R}: f(0) = 0, f \mbox{ is continuous} \right\}.
\end{equation*}

The functions $\left( \sqrt{2} \sin ((\ell-1/2) \pi x) \right)_{\ell \in \mathbb{N}}$ are $L_2$ orthonormal. First note that the functions in $\cH$ permit expansions of the form
\begin{equation}\label{exp_rand_fun}
f(x) = \sum_{\ell=1}^\infty a_\ell \frac{\sqrt{2} \sin ((\ell-1/2) \pi x)}{(\ell-1/2)\pi},
\end{equation}
i.e., every continuous function $f$ which vanishes at $0$ can be described by Eq. \eqref{exp_rand_fun}. We can identify a function $f \in \cH$ with the sequence of coefficients $\bsa = (a_\ell)_{\ell \in \mathbb{N}}$ via the injective mapping $T: \cH \to \mathbb{R}^{\mathbb{N}}$, where $T(f) = \bsa$. To define a probability measure on $\cH$, it thus suffices to define a probability measure on the set of sequences $\bsa$.

In one dimension, we use the Gaussian distribution with mean $0$ and variance $1$ and for sequences we use the infinite product measure. That is, the measure of any interval $[\bsb, \bsc] := \prod_{j=1}^\infty [b_j, c_j]$, with $b_j \le c_j$, is given by
\begin{equation*}
\mathbb{Q}([\bsb,\bsc]) := \prod_{j=1}^\infty \frac{1}{\sqrt{2\pi}} \int_{b_j}^{c_j} \exp\left(-\frac{x^2}{2}\right) \,\mathrm{d} x.
\end{equation*}
In other words, the probability that $\bsa \in [\bsb, \bsc]$ is given by $\mathbb{Q}([\bsb,\bsc])$. This can then be extended to any Borel set $A \in \mathbb{R}^{\mathbb{N}}$. The Borel $\sigma$-algebra on $\mathbb{R}^{\mathbb{N}}$ defines a $\sigma$-algebra $\mathcal{F}$ on $\cH$ via the mapping $T$. For a Borel set $A \subseteq \mathbb{R}^{\mathbb{N}}$ let $F_A = \{f \in \cH: T(f) \in A\}$. The probability measure $\mathbb{P}$ on $(\cH, \mathcal{F})$ is now given by
\begin{equation*}
\mathbb{P}(F_A) = \mathbb{Q}(A) \quad \mbox{for any Borel set } A.
\end{equation*}

It is known that if one chooses the coefficients $a_\ell$ in \eqref{exp_rand_fun} i.i.d. with Gaussian distribution with mean $0$ and variance $1$, then the function $f$ is almost surely continuous. This follows since a Wiener process or Brownian motion is almost surely continuous. This means that
\begin{equation*}
\mathbb{Q}(\mathbb{R}^{\mathbb{N}} \setminus T(\cH) ) = 0.
\end{equation*}

The covariance kernel is now given by
\begin{align*}
C(x,y) = & \int_{\cH} f(\bsx) f(\bsy) \, \mathbb{P}(\mathrm{d} f) \\ = & \sum_{k, \ell=1}^\infty  \EE(a_k a_\ell) \frac{\sqrt{2} \sin ( (k-1/2) \pi x)}{(k-1/2) \pi} \frac{\sqrt{2} \sin( (\ell-1/2) \pi y)}{(\ell-1/2) \pi}.
\end{align*}
The expectation value for $k \neq \ell$ is $0$, whereas for $k = \ell$ it is $1$, since the mean of $a_k$ is $0$ and the variance is $1$. Thus
\begin{align}\label{defC}
C(x, y) = &  \sum_{\ell=1}^\infty \frac{\sqrt{2} \sin ( (\ell-1/2) \pi x)}{(\ell-1/2) \pi} \frac{\sqrt{2} \sin( (\ell-1/2) \pi y)}{(\ell-1/2) \pi}  =  \min(x, y).
\end{align}

\subsubsection*{Average-case error}

We have seen how reproducing kernels can be used to give a formula for the worst-case error. We now provide an analogue for the covariance kernel and the average-case error.

Let $\cH$ be a function space defined on $[0,1]^s$ and let $(\cH, \mathcal{F}, \mathbb{P})$ be a probability space. For $1 \le p \le \infty$ we define the {\it $L_p$ average-case} error by
\begin{equation*}
\mathrm{ace}_p(\cH, \cP) = \left( \int_{\cH} \left|\int_{[0,1]^s} f(\bsx) \,\mathrm{d} \bsx - \frac{1}{N} \sum_{\bsx \in \cP} f(\bsx) \right|^p \, \mathbb{P}(\mathrm{d} f) \right)^{1/p},
\end{equation*}
with the obvious modifications for $p=\infty$.

We consider now the case $p =2$. Let $C:[0,1]^s \times [0,1]^s \to \mathbb{R}$ be the covariance kernel, that is
\begin{equation*}
C(\bsx, \bsy) = \int_{\cH} f(\bsx) f(\bsy) \mathbb{P}(\mathrm{d} f).
\end{equation*}
Then we have
\begin{align*}
\mathrm{ace}^2_2(\cH, \cP) = & \int_{\cH} \int_{[0,1]^s} \int_{[0,1]^s} f(\bsx) f(\bsy) \,\mathrm{d} \bsx \,\mathrm{d} \bsy \,\mathbb{P}(\mathrm{d} f) \\ & - \int_{\cH}  \frac{2}{N} \sum_{\bsx \in \cP} \int_{[0,1]^s} f(\bsx) f(\bsy) \,\mathrm{d} \bsy \,\mathbb{P}(\mathrm{d} f)  + \int_{\cH} \frac{1}{N^2} \sum_{\bsx, \bsy \in \cP} f(\bsx) f(\bsy) \,\mathbb{P}(\mathrm{d} f) \\  = &  \int_{[0,1]^s} \int_{[0,1]^s}  \int_{\cH} f(\bsx) f(\bsy) \, \mathbb{P}(\mathrm{d} f) \,\mathrm{d} \bsx \,\mathrm{d} \bsy \\ & - \frac{2}{N} \sum_{\bsx \in \cP} \int_{[0,1]^s} \int_{\cH} f(\bsx) f(\bsy) \,\mathbb{P}(\mathrm{d} f) \,\mathrm{d} \bsy  + \frac{1}{N^2} \sum_{\bsx, \bsy \in \cP} \int_{\cH} f(\bsx) f(\bsy) \,\mathbb{P}(\mathrm{d} f) \\  = & \int_{[0,1]^s} \int_{[0,1]^s} C(\bsx,\bsy) \,\mathrm{d} \bsx \,\mathrm{d} \bsy - \frac{2}{N} \sum_{\bsx \in \cP} \int_{[0,1]^s} C(\bsx, \bsy) \,\mathrm{d} \bsy + \frac{1}{N^2} \sum_{\bsx, \bsy \in \cP} C(\bsx, \bsy).
\end{align*}
This formula is analogous to \eqref{wce_k}. Since symmetric positive definite functions can be interpreted as reproducing kernels or covariance kernels, this allows one to interpret the error either as worst-case error or as average-case error (for a different function space).  We refer the reader to \cite[Chapter~24]{NW12} and \cite{ritt} for more information on covariance kernels and average-case errors.

% Now let $p=\infty$. Then
% \begin{equation*}
% \mathrm{ace}_\infty(\cH, \cP) = \esssup_{f \in \cH} \left|\int_{[0,1]^s} f(\bsx) \,\mathrm{d} \bsx - \frac{1}{N} \sum_{\bsx \in \cP} f(\bsx) \right|.
% \end{equation*}
% \todo{When can we replace $\esssup$ by $\sup$? If one can replace $\esssup$ by $\sup$, then $\lim_{p\to \infty} \mathrm{ace}_p(\cH, \cP) = \mathrm{wce}(\cH, \cP)$. }

%\medskip

%In the following we discuss stochastic processes.

\subsection{Karhunen-Lo\'eve Expansion}

The Karhunen-Lo\'eve expansion of the covariance kernel follows from Mercer's theorem by using the fact that the covariance kernel is also a reproducing kernel.

\begin{theorem}
Let $Z(\bsx)$ be a zero-mean square integrable random field (stochastic process) defined over a probability space $(\Omega, \mathcal{F}, \mathbb{P})$ and indexed over the interval $[0,1]^s$, with continuous covariance kernel $C: [0,1]^s \times [0,1]^s \to \mathbb{R}$. Then $C$ satisfies the conditions in Mercer's theorem and we have the expansion
\begin{equation*}
C(\bsx, \bsy) = \sum_{\ell=1}^\infty \lambda_\ell \psi_\ell(\bsx) \overline{ \psi_\ell(\bsy) },
\end{equation*}
where $\psi_\ell$ are $L_2([0,1]^s)$ orthonormal eigenfunctions with corresponding eigenvalues $(\lambda_\ell)_{\ell=1}^{\infty}$. Then the random field (stochastic process)  $Z(\bsx)$ admits the presentation
\begin{equation*}
Z(\bsx) = \sum_{\ell=1}^\infty \xi_\ell \sqrt{\lambda_\ell } \psi_{\ell}(\bsx),
\end{equation*}
where the convergence is in $L_2$ norm, uniform in $\bsx$ and
\begin{equation*}
\xi_\ell = \frac{1}{\sqrt{\lambda_\ell }} \int_{[0,1]^s} Z(\bsx) \overline{ \psi_\ell(\bsx) } \,\mathrm{d} \bsx.
\end{equation*}
The random variables $\xi_\ell$ have zero-mean, are uncorrelated and have variance $1$.
\end{theorem}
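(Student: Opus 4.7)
The plan is to first verify that the covariance kernel $C$ satisfies the hypotheses of Mercer's theorem, then define the random coefficients $\xi_\ell$ and check their distributional properties, and finally establish $L_2$ convergence of the series via a direct second-moment computation that reduces to the pointwise Mercer expansion of $C(\bsx,\bsx)$.

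First I would record the two structural properties of $C$ needed to invoke Mercer. Symmetry is immediate from the definition of covariance. For positive semi-definiteness, I would observe that for any $a_1,\ldots,a_N \in \mathbb{C}$ and $\bsx_1,\ldots,\bsx_N \in [0,1]^s$,
\begin{equation*}
\sum_{n,m=1}^N a_n \overline{a_m}\, C(\bsx_n,\bsx_m) = \EE\Bigl|\sum_{n=1}^N a_n Z(\bsx_n)\Bigr|^2 \ge 0,
\end{equation*}
so $C$ is a reproducing kernel in the sense of the paper. Continuity is given, so Mercer applies and yields orthonormal eigenfunctions $\psi_\ell$ with nonnegative eigenvalues $\lambda_\ell$ and the expansion of $C$.

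Next I would analyse $\xi_\ell := \lambda_\ell^{-1/2}\int_{[0,1]^s} Z(\bsx)\,\overline{\psi_\ell(\bsx)}\dint\bsx$ (for indices with $\lambda_\ell>0$; the degenerate cases can be handled separately). Since $Z$ is zero mean and square integrable, Fubini gives $\EE[\xi_\ell]=0$. For the covariance, another application of Fubini plus the eigenrelation $\int C(\bsx,\bsy)\psi_m(\bsy)\dint\bsy = \lambda_m \psi_m(\bsx)$ gives
\begin{equation*}
\EE[\xi_\ell \overline{\xi_m}] = \frac{1}{\sqrt{\lambda_\ell \lambda_m}} \int_{[0,1]^s}\!\!\int_{[0,1]^s} C(\bsx,\bsy)\,\psi_m(\bsy)\,\overline{\psi_\ell(\bsx)}\dint\bsy \dint\bsx = \frac{\lambda_m}{\sqrt{\lambda_\ell \lambda_m}}\,\delta_{\ell,m} = \delta_{\ell,m},
\end{equation*}
so the $\xi_\ell$ are uncorrelated with unit variance.

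Finally, I would prove the series representation by a direct $L_2$ computation of the truncation error $Z_N(\bsx):=\sum_{\ell=1}^N \xi_\ell\sqrt{\lambda_\ell}\,\psi_\ell(\bsx)$. Using $\EE[Z(\bsx)\overline{\xi_\ell}] = \lambda_\ell^{-1/2}\int C(\bsx,\bsy)\psi_\ell(\bsy)\dint\bsy = \sqrt{\lambda_\ell}\,\psi_\ell(\bsx)$ together with the orthonormality of the $\xi_\ell$ already proved, the cross-term and the $\EE|Z_N(\bsx)|^2$ term both collapse to $\sum_{\ell=1}^N \lambda_\ell |\psi_\ell(\bsx)|^2$, leaving
\begin{equation*}
\EE\bigl|Z(\bsx)-Z_N(\bsx)\bigr|^2 = C(\bsx,\bsx) - \sum_{\ell=1}^N \lambda_\ell\,|\psi_\ell(\bsx)|^2.
\end{equation*}
The right-hand side tends to zero as $N\to\infty$, and the crucial point — and the main obstacle — is that the convergence must be \emph{uniform} in $\bsx$. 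This does not follow from Mercer's theorem as stated in the paper (which only asserts a pointwise representation), but from the stronger classical form of Mercer's theorem, which guarantees that the series $\sum_\ell \lambda_\ell \psi_\ell(\bsx)\overline{\psi_\ell(\bsy)}$ converges absolutely and uniformly on $[0,1]^s\times[0,1]^s$ when $C$ is continuous. Taking $\bsx=\bsy$ and invoking Dini's theorem (monotone convergence of continuous functions to a continuous limit on a compact set) produces the required uniform convergence of the partial sums $\sum_{\ell=1}^N \lambda_\ell|\psi_\ell(\bsx)|^2$ to $C(\bsx,\bsx)$, completing the proof.
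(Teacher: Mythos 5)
Your argument is correct, and it is the standard proof of the Karhunen--Lo\'eve theorem; note that the paper itself offers no proof of this statement (it is presented as a known consequence of Mercer's theorem, with a reference to the surrounding discussion), so there is nothing to compare against line by line. Your key identity
\begin{equation*}
\EE\bigl|Z(\bsx)-Z_N(\bsx)\bigr|^2 = C(\bsx,\bsx) - \sum_{\ell=1}^N \lambda_\ell\,|\psi_\ell(\bsx)|^2
\end{equation*}
is verified correctly, as are the moment computations for the $\xi_\ell$, and you are right to flag that the uniformity in $\bsx$ is the only nontrivial point and that it does not follow from the pointwise form of Mercer's theorem quoted in the paper. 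Two small remarks. First, your final step is slightly redundant: if you invoke the classical Mercer theorem with its uniform convergence on the diagonal, you are done immediately; Dini's theorem is the tool you would use to \emph{prove} that uniform statement from pointwise convergence (continuity of the partial sums requires noting that eigenfunctions with $\lambda_\ell>0$ are continuous, since $\psi_\ell=\lambda_\ell^{-1}T_C\psi_\ell$ and $T_C$ maps $L_2$ into continuous functions). Pick one route. Second, there are two technical points you wave at but should at least name: the integral $\int_{[0,1]^s} Z(\bsx)\overline{\psi_\ell(\bsx)}\dint\bsx$ is only well defined as a random variable once one fixes a jointly measurable modification of $Z$ (which exists because continuity of $C$ gives mean-square continuity), and the applications of Fubini are justified by $\EE\int|Z(\bsx)\psi_\ell(\bsx)|\dint\bsx<\infty$, which follows from square integrability and Cauchy--Schwarz. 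The treatment of zero eigenvalues (omit those terms, or assign the $\xi_\ell$ arbitrarily) is indeed routine as you say. None of these is a genuine gap.
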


The Karhnunen-Lo\'eve expansion yields a bi-orthogonal expansion of a random field (stochastic process), since the random variables $\xi_\ell$ are uncorrelated and hence $\mathbb{E}(\xi_\ell \xi_k) = \delta_{\ell, k}$, the Kronecker $\delta$ symbol, and the eigenfunctions are $L_2$ orthonormal.

The Wiener process or Brownian motion can be expanded in terms of its Karhunen-Lo\'eve expansion, which we describe in the following.

\subsubsection*{Example: Karhunen-Lo\'eve expansion of Wiener process or Brownian motion}

The covariance kernel of the {\it Wiener process} is given by
\begin{equation*}
W(x,y) = \min(x, y).
\end{equation*}
We have analyzed the corresponding reproducing kernel in Section~\ref{subsec_Mercer}. Functions in the corresponding reproducing kernel Hilbert space have the expansion given in \eqref{fi_Wiener}.

We can now use this expansion to describe a Wiener process (or also called Brownian motion) on the interval $[0,1]$. Compared to its deterministic counterpart (i.e. functions in the corresponding reproducing kernel Hilbert space), the coefficients in the expansion are now random variables.

Let $\xi_\ell \in \mathcal{N}(0,1)$ for $\ell \in \mathbb{N}$ be independent Gaussian random variables with mean $0$ and variance $1$. Then the Wiener process $Z(x)$ has the expansion
\begin{equation*}
Z(x) = \sum_{\ell=1}^\infty \xi_\ell \frac{\sqrt{2} \sin ((\ell-1/2) \pi x)}{(\ell-1/2) \pi}.
\end{equation*}
It is easy to see that the expectation value of $Z(x)$ satisfies $\mathbb{E}(Z(x)) = 0$, since all $\xi_\ell$ have mean $0$. The covariance is now given by
\begin{align*}
\mathrm{cov}(Z(x), Z(y)) = & \mathbb{E}\left(\sum_{\ell=1}^\infty \xi_\ell \frac{\sqrt{2}\sin ((\ell-1/2) \pi x)}{(\ell-1/2) \pi} \sum_{k=1}^\infty \xi_k \frac{\sqrt{2} \sin ((k-1/2) \pi y)}{(k-1/2) \pi } \right) \\ = & \sum_{k, \ell=1}^\infty \mathbb{E}(\xi_\ell \xi_k) \frac{\sqrt{2}\sin ((\ell-1/2) \pi x)}{(\ell-1/2) \pi} \frac{\sqrt{2}\sin ((k -1/2) \pi y)}{(k -1/2) \pi}.
\end{align*}
Since the random variables $\xi_\ell$ are independent with mean $0$ we have $\mathbb{E}(\xi_\ell \xi_k) = 0$ for $k \neq \ell$. If $k = \ell$ it follows that $\mathbb{E}(\xi_\ell \xi_\ell) = 1$, since the variance of $\xi_\ell$ is also $1$. Thus we have
\begin{equation*}
\mathrm{cov}(Z(x), Z(y)) =  \sum_{\ell=1}^\infty \frac{\sqrt{2}\sin ((\ell-1/2) \pi x)}{(\ell-1/2) \pi} \frac{\sqrt{2}\sin ((\ell -1/2) \pi y)}{(k -1/2) \pi} = \min(x, y)=C(x,y),
\end{equation*}
where $C$ is as in \eqref{defC}.

A smooth version of the Brownian motion can be obtained via integration. The covariance kernel of the integrated Brownian motion is discussed in \cite{GHT03}.

\subsubsection*{Partial differential equations with random coefficients}

As an application of random fields we describe partial differential equations (PDE) with random coefficients.

We consider the physical domain $[0,1]^d$ (usually $d=1,2,3$). Let
\begin{equation*}
a(\bsx, \bsz) = a_0(\bsx) + \sum_{\ell=1}^\infty z_\ell \lambda_\ell \psi_\ell(\bsx),
\end{equation*}
where $\bsz = (z_1, z_2, \ldots)$. The $z_\ell$ are i.i.d. random variables with mean $0$ and finite variance $\sigma$. In the simplest case, the $z_\ell$ are uniformly distributed in $[-1/2, 1/2]$, but other distributions can be studied as well. Then $a-a_0$ is a random field with mean $0$, or, in other words, the mean of $a$ is $a_0$. The underlying covariance kernel $C$ corresponding to $a-a_0$ is given by
\begin{equation*}
C(\bsx, \bsy) = \sum_{\ell=1}^\infty \sigma \lambda_\ell^2 \psi_\ell(\bsx) \overline{\psi_\ell(\bsy)}={\rm cov}(a(\bsx,\cdot),a(\bsy,\cdot)).
\end{equation*}

We consider now the PDE
\begin{equation*}
- \nabla \cdot (a(\bsx, \bsz) \nabla u(\bsx, \bsz) ) = f(\bsx) \mbox{ in } D = [0,1]^d, \quad u(\bsx, \bsz) = 0 \mbox{ on } \partial D.
\end{equation*}

Since the $z_\ell$ are random variables, the solution $u$ of the PDE also depends on the random variables $z_\ell$, and is therefore also a random variable. One is for instance interested in approximating the expectation value of $u$ (or a linear functional of $u$). To approximate the expectation value of the solution $u$, one ansatz is to use QMC points to sample $(z_1, z_2, \ldots, z_s)$ for some large enough $s$, set $z_{s+1} = z_{s+2} = \ldots = 0$ and use a PDE solver to approximate $u(\bsx, (z_1, z_2, \ldots, z_s, 0, 0, \ldots))$. Averaging the solution $u$ over all QMC points yields an approximation of the expectation value. Such a study is carried out in \cite{KSS1}. See also \cite{GKNSS} where the covariance kernel was used directly to sample from the random field.

\subsection{Lower Bounds Using Bump Functions}\label{subsec_bumbf}

A standard approach to proving lower bounds involves so-called {\it bump functions}. Let $\cH$ be a Banach space with norm $\|\cdot\|$. To prove a lower bound on the worst-case error one possible strategy is to construct a bump function. Let $\cP = \{\bsx_0, \bsx_1, \ldots, \bsx_{N-1}\} \subseteq [0,1]^s$ be an arbitrary but fixed point set. The idea is to construct a function $f$ with the following properties:
\begin{enumerate}
\item $f(\bsx_n) = 0$ for all $0 \le n < N$;
\item $\|f\| = 1$;
\item $\int_{[0,1]^s} f(\bsx) \,\mathrm{d} \bsx$ is large.
\end{enumerate}
If we can construct for every $N$-point set $\cP$ such a function $f$ which satisfies those three properties, with $\int_{[0,1]^s} f(\bsx) \,\mathrm{d} \bsx \ge \varepsilon(N, s)$, say, then
\begin{equation*}
\inf_{\cP \subseteq [0,1]^s \atop |\cP| = N} \mathrm{wce}(\cH, \cP) \ge \varepsilon(N, s).
\end{equation*}

We illustrate the idea in a simple example. 
\begin{theorem}\label{lower_bound_bump}
Let $\cH_K$ be the reproducing kernel Hilbert space with reproducing kernel $K(\bsx, \bsy) = \prod_{j=1}^s ( 1 + \min(x_j, y_j))$.  Let $\cP = \{\bsx_0, \bsx_1, \ldots, \bsx_{N-1}\} \subseteq [0,1]^s$ be an arbitrary point set. Then there exists a constant $c_s > 0$ independent of $N$ and $\cP$ such that
\begin{equation*}
\mathrm{wce}(\cH_K, \cP) \ge c_s \frac{(\log N)^{\frac{s-1}{2}}}{N}.
\end{equation*}
\end{theorem}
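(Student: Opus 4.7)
The plan is to carry out Roth's classical bump-function construction and verify that it yields a valid test function in $\cH_K$. First, set $K=\lceil \log_2(2N)\rceil$ so that $2^K\ge 2N$, and for each multi-index $\bsk=(k_1,\ldots,k_s)\in\NN_0^s$ with $|\bsk|=k_1+\cdots+k_s=K$ partition $[0,1]^s$ into the $2^K$ dyadic boxes $I_{\bsk,\bsm}=\prod_{j=1}^s[m_j 2^{-k_j},(m_j+1)2^{-k_j}]$ indexed by $0\le m_j<2^{k_j}$. Let $\phi_{k,m}(x)=\max\{0,2^{-k-1}-|x-(m+1/2)2^{-k}|\}$ be the univariate tent function supported on $[m 2^{-k},(m+1)2^{-k}]$ (vanishing at both endpoints, and in particular at $0$), and set $\phi_{\bsk,\bsm}(\bsx)=\prod_{j=1}^s\phi_{k_j,m_j}(x_j)$. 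Since $|\cP|=N\le 2^K/2$, at most half of the $2^K$ boxes at level $\bsk$ can contain a point of $\cP$, so letting $A_{\bsk}$ denote the indices of the empty boxes gives $|A_{\bsk}|\ge 2^{K-1}\ge N$.

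Next, define the candidate $f=\sum_{|\bsk|=K}\sum_{\bsm\in A_{\bsk}}\phi_{\bsk,\bsm}$. By construction $f(\bsx_n)=0$ for all $\bsx_n\in\cP$, so the integration error of $f$ against $\cP$ is exactly $\int_{[0,1]^s} f\,\mathrm{d}\bsx$, and it remains to bound $\int f$ from below and $\|f\|_K$ from above. The crux is an orthogonality argument: since every univariate $\phi_{k,m}$ vanishes at $0$, the tensor-product inner product formula analogous to the one in Section~\ref{subsec_KH_ugl} (with anchor $\boldsymbol{0}$ in place of $\boldsymbol{1}$) collapses to its top-order term
\begin{equation*}
\langle \phi_{\bsk,\bsm},\phi_{\bsk',\bsm'}\rangle_K=\prod_{j=1}^s\int_0^1\phi'_{k_j,m_j}(t)\phi'_{k'_j,m'_j}(t)\,\mathrm{d} t.
\end{equation*}
Each $\phi'_{k,m}$ is a rescaled Haar function at scale $k$, so the classical one-dimensional Haar orthogonality — a finer Haar function integrates to zero against the constant $\pm 1$ value of any coarser Haar function on its support — forces the above product to vanish unless $(\bsk,\bsm)=(\bsk',\bsm')$, in which case it equals $\prod_j 2^{-k_j}=2^{-K}$. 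Thus the $\phi_{\bsk,\bsm}$ appearing in $f$ are mutually orthogonal in $\cH_K$ and
\begin{equation*}
\|f\|_K^2=\sum_{|\bsk|=K}|A_{\bsk}|\cdot 2^{-K}\;\le\;\binom{K+s-1}{s-1},
\end{equation*}
which is at most a constant depending on $s$ times $K^{s-1}$.

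For the numerator, a direct calculation gives $\int_0^1\phi_{k,m}(x)\,\mathrm{d} x=2^{-2k-2}$, hence $\int_{[0,1]^s}\phi_{\bsk,\bsm}\,\mathrm{d}\bsx=4^{-s}\,2^{-2K}$, so using $|A_{\bsk}|\ge N$ and $2^{2K}\asymp N^2$ one obtains
\begin{equation*}
\int_{[0,1]^s} f\,\mathrm{d}\bsx \;\ge\; \binom{K+s-1}{s-1}\cdot N\cdot 4^{-s}\cdot 2^{-2K},
\end{equation*}
which is bounded below by a constant depending on $s$ times $K^{s-1}/N$. Applying the definition of worst-case error to $f/\|f\|_K$ then yields $\mathrm{wce}(\cH_K,\cP)\ge \int f/\|f\|_K$, at least a constant depending on $s$ times $K^{(s-1)/2}/N\asymp (\log N)^{(s-1)/2}/N$, which is the asserted bound.

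The main obstacle is the orthogonality step: one must verify that tent-function tensor products indexed by \emph{different} partitions $\bsk$ of $K$ (not merely different $\bsm$ at a fixed $\bsk$) are orthogonal in the mixed-derivative norm. This reduces in each coordinate to the Haar orthogonality mentioned above and uses crucially that all $s$ tent factors vanish at $0$, so only the top-order term in the ANOVA-type expansion of $\langle\cdot,\cdot\rangle_K$ survives. Once this is in hand, the remainder is bookkeeping — counting the $\binom{K+s-1}{s-1}$ compositions of $K$, counting empty boxes, and combining the resulting bounds on $\int f$ and $\|f\|_K$ to extract the rate $(\log N)^{(s-1)/2}/N$.
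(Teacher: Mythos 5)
Your proposal is correct, and its skeleton is the same Roth-type bump-function construction as the paper's: sum localized bumps over the empty dyadic boxes of every shape at a fixed level $K\approx\log_2 N$, bound the norm above by $\asymp K^{(s-1)/2}$ (after normalization), bound the integral below by $\asymp K^{s-1}/N$, and take the quotient. Where you genuinely diverge is in the one step you correctly identify as the crux, the norm estimate. The paper uses the quadratic bump $\phi(t)=t(1-t)$, whose derivative is \emph{not} orthogonal across scales; it therefore must compute the nested cross-correlations $\int\phi'(2^{m_j}x-a_j)\phi'(2^{m'_j}x-a'_j)\,\mathrm{d}x$ explicitly and control the resulting double sum over shapes via $\sum_{\bsk}2^{-|k_1|-\cdots-|k_s|}\le 3^s$. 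You instead choose tent functions whose derivatives are exactly ($L_\infty$-normalized) Haar functions, so that the anchored mixed-derivative inner product collapses (all lower-order ANOVA terms vanish because every factor vanishes at $0$, as in the paper) to a product of one-dimensional Haar inner products, and the whole family $\{\phi_{\bsk,\bsm}\}$ becomes exactly orthogonal in $\cH_K$ — including across different shapes $\bsk\neq\bsk'$, since $|\bsk|=|\bsk'|$ forces some coordinate where the two Haar scales differ. This turns the norm bound into a clean Parseval identity, $\|f\|_K^2=\sum|A_{\bsk}|2^{-K}\le\binom{K+s-1}{s-1}$, eliminating the cross-term bookkeeping entirely and making the link to the Haar-based Roth argument of Section~\ref{OFLB} explicit. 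What you lose is only flexibility: the paper's $t^r(1-t)^r$ variant adapts to higher smoothness, where exact orthogonality is no longer available. One cosmetic point: take the boxes half-open (or, equivalently, require the \emph{open} box to miss $\cP$, as the paper does), so that each point of $\cP$ kills at most one box per shape; since your tents vanish on the box boundaries this changes nothing in the estimates.
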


\begin{proof}
To construct $f$, we start with the one-dimensional case. One choice of a basic function $\phi:\mathbb{R} \to \mathbb{R}$ is
\begin{equation*}
\phi(t) = \left\{ \begin{array}{ll}  t (1-t) & \mbox{if } 0 < t < 1, \\ 0 & \mbox{otherwise}. \end{array} \right.
\end{equation*}
(If one considers function spaces of smoothness $r$, then one could use $t^{r}(1-t)^{r}$.) The scaled and shifted versions are
\begin{equation*}
\phi(2^m t - a)
\end{equation*}
for integers $m \in \mathbb{N}_0$ and $0 \le a < 2^m$. The support of this scaled and shifted function is $[a/2^m, (a+1)/2^m]$.

Choose the integer $m$ such that $2^{m-2} \le N < 2^{m-1}$. Let $\bsm = (m_1, m_2,\ldots, m_s) \in \mathbb{N}_0$ and let $|\bsm|=m_1+m_2+\cdots+m_s$. Define $\DD_j=\{0,1,\ldots,2^j-1\}$ and $\DD_{\bsm}=\DD_{m_1} \times \ldots \times \DD_{m_s}$. We can now define a function $g_{\bsm}$ which satisfies 1. by setting
\begin{equation*}
g_{\bsm}(\bsx) = \sum_{\satop{ \bsa \in \DD_{\bsm} }{(\bsa/2^{\bsm}, (\bsa+\boldsymbol{1})/2^{\bsm}) \cap \cP = \emptyset} }  \prod_{j=1}^s \phi(2^{m_j} x_j - a_j),
\end{equation*}
where $(\bsa/2^{\bsm}, (\bsa+\boldsymbol{1})/2^{\bsm} ) = \prod_{j=1}^s (a_j/2^{m_j}, (a_j+1)/2^{m_j} )$. The condition $(\bsa/2^{\bsm}, (\bsa+\boldsymbol{1})/2^{\bsm}) \cap \cP = \emptyset$ ensures that $g_{\bsm}(\bsx_n) = 0$ for all $0 \le n < N$. We define the function
\begin{equation}\label{bump_fun_g}
g(\bsx ) = \sum_{\satop{\bsm \in \mathbb{N}_0^s}{|\bsm| = m}} g_{\bsm}(\bsx).
\end{equation}
Again we have $g(\bsx_n) = 0$ for all $0 \le n < N$.

In the next step, we estimate the norm of $g$ and then set $f = g/\|g\|_K$. Then $f$ also satisfies the second condition. The squared norm in our particular function space is given by
\begin{align*}
\|h\|_K^2 = & \sum_{\uu \subseteq [s]} \int_{[0,1]^{\uu}} \left| \frac{\partial^{\uu} h}{\partial \bsx_{\uu}}(\bsx_{\uu}; \boldsymbol{0}) \right|^2 \,\mathrm{d} \bsx_{\uu},
\end{align*}
where $[s]=\{1,2,\ldots,s\}$, and for $\uu \subseteq [s]$ and $\bsx=(x_1,x_2,\ldots,x_s)$ we write $(\bsx_{\uu}; \boldsymbol{0})$ for the $s$-dimensional vector whose $j$th component is $x_j$ for $j \in \uu$ and $0$ otherwise.

We consider now the norm of \eqref{bump_fun_g}. Since for $t=0$ we have $\phi(2^m t- a) = \phi(-a) = 0$ for all integers $a$, we obtain that
\begin{align*}
\|g\|_K^2 = & \int_{[0,1]^s} \left|\frac{\partial^s g}{\partial \bsx}(\bsx) \right|^2 \,\mathrm{d} \bsx \\ = &  \sum_{\satop{\bsm \in \mathbb{N}_0^s}{|\bsm| = m}}  \sum_{\satop{\bsm' \in \mathbb{N}_0^s}{|\bsm'| = m}} \int_{[0,1]^s} \frac{\partial^s g_{\bsm} }{\partial \bsx}(\bsx) \frac{\partial^s g_{\bsm'} }{\partial \bsx}(\bsx) \,\mathrm{d} \bsx \\ = & \sum_{\satop{\bsm \in \mathbb{N}_0^s}{|\bsm| = m}} \sum_{\satop{\bsm' \in \mathbb{N}_0^s}{|\bsm'| = m}} \sum_{\satop{ \bsa \in \DD_{\bsm} }{(\bsa/2^{\bsm}, (\bsa+\boldsymbol{1})/2^{\bsm} ) \cap \cP = \emptyset} }  \sum_{\satop{ \bsa' \in \DD_{\bsm'}}{(\bsa'/2^{\bsm'}, (\bsa'+\boldsymbol{1})/2^{\bsm'}) \cap \cP = \emptyset} } \prod_{j=1}^s  \int_0^1   \phi'(2^{m_j} x_j - a_j)  \phi'(2^{m'_j} x_j - a'_j) \,\mathrm{d} x_j.
\end{align*}
For $m_j \ge m'_j$ we have
\begin{equation*}
\int_0^1 \phi'(2^{m_j} x_j - a_j) \phi'(2^{m'_j} x_j - a'_j) \,\mathrm{d} x_j = \left\{\begin{array}{ll}  \frac{1}{3} 2^{2m'_j - m_j} & \mbox{if } \left[\frac{a_j}{2^{m_j}}, \frac{a_j+1}{2^{m_j}}\right] \subseteq \left[\frac{a'_j}{2^{m'_j}},\frac{a'_j+1}{2^{m'_j}}\right], \\ 0 & \mbox{otherwise}. \end{array} \right.
\end{equation*}
Note that $[a_j 2^{-m_j}, (a_j+1) 2^{-m_j}]$ is the support of $\phi'(2^{m_j} x_j - a_j)$. The condition that the support of $\phi'(2^{m_j} x_j - a_j)$ is contained in the support of $\phi'(2^{m'_j} x_j - a'_j)$ is equivalent to $2^{m_j - m'_j} a'_j \le a_j < 2^{m_j - m'_j} (a'_j + 1)$. Thus for given $a'_j, m_j, m'_j$ there are $2^{m_j-m'_j}$ possible choices for $a_j$. Thus we have
\begin{align*}
\|g\|_K^2 \le & \frac{1}{3^s} \sum_{\satop{\bsm \in \mathbb{N}_0^s}{|\bsm|= m}}  \sum_{\satop{\bsm' \in \mathbb{N}_0^s}{|\bsm'|= m}} \prod_{j=1}^s \left( 2^{2\min\{m_j, m'_j\} - \max\{m_j, m'_j\}} 2^{\min\{m_j, m'_j\} } 2^{\max\{m_j, m'_j\} - \min\{m_j, m'_j\}} \right) \\ = & \frac{2^{2m}}{3^s} \sum_{\satop{\bsm \in \mathbb{N}_0^s}{|\bsm|= m}}  \sum_{\satop{\bsm' \in \mathbb{N}_0^s}{|\bsm'| = m}} \prod_{j=1}^s 2^{-|m_j-m'_j| }.
\end{align*}
For any fixed $\bsm \in \mathbb{N}_0^s$ we have
\begin{align*}
\sum_{\satop{\bsm' \in \mathbb{N}_0^s}{|\bsm'|= m}} \prod_{j=1}^s 2^{-|m_j-m'_j| } \le & \sum_{\satop{\bsk \in \mathbb{Z}^s}{k_1+ k_2 + \cdots + k_s = 0}} 2^{-|k_1| - |k_2| - \cdots - |k_s|} \le  \left(\sum_{k=-\infty}^\infty 2^{-|k| } \right)^s  =  3^{s}.
\end{align*}
This implies that
\begin{equation*}
\|g\|_K^2 \le 2^{2m}   \sum_{\satop{\bsm \in \mathbb{N}_0^s}{|\bsm|= m}} 1 \le 2^{2m} {m+s-1 \choose s-1}.
\end{equation*}

Further we have
\begin{align*}
\int_{[0,1]^s} g(\bsx) \,\mathrm{d}\bsx = & \sum_{\satop{\bsm \in \mathbb{N}_0^s}{|\bsm| = m}} \sum_{\satop{ \bsa \in \DD_{\bsm}}{(\bsa/2^{\bsm}, (\bsa+\boldsymbol{1})/2^{\bsm}) \cap \cP = \emptyset} }  \prod_{j=1}^s \int_0^1 \phi(2^{m_j} x_j - a_j) \,\mathrm{d} x_j \\ = &\sum_{\satop{\bsm \in \mathbb{N}_0^s}{|\bsm|= m}} \sum_{\satop{ \bsa \in \DD_{\bsm} }{(\bsa/2^{\bsm}, (\bsa+\boldsymbol{1})/2^{\bsm}) \cap \cP = \emptyset} }  \frac{1}{2^m 6^s} \\ \ge & {m +s-1 \choose s-1} \frac{2^m-N}{2^m 6^s} \ge {m+s-1 \choose s-1} \frac{1}{2\cdot 6^s}.
\end{align*}

Let now $f = g / \|g\|_K$. Then we have $\|f\|_K = 1$ and there is a constant $c_s > 0$ such that
\begin{align*}
\int_{[0,1]^s} f(\bsx) \,\mathrm{d} \bsx = & \frac{1}{\|g\|} \int_{[0,1]^s} g(\bsx) \,\mathrm{d}\bsx \ge \frac{1}{2 \cdot 6^s} \frac{1}{2^m} \sqrt{{m+s-1 \choose s-1}} \ge  c_s \frac{(\log N)^{\frac{s-1}{2}}}{N}.
\end{align*}

Since $f$ satisfies all three conditions, we obtain
\begin{equation*}
\mathrm{wce}(\cH, \cP) \ge c_s \frac{(\log N)^{\frac{s-1}{2}}}{N}
\end{equation*}
for any $N$-element point set $\cP \subseteq [0,1]^s$.
\end{proof}

% \begin{remark}\rm
% The bound of Theorem~\ref{lower_bound_bump} also applies to the $L_2$ discrepancy. An alternative method of proof based on orthogonal functions is discussed in Section~\ref{OFLB} (see Theorem~\ref{thm:roth}). The constant $c_s$ can be chosen as
% \begin{equation*}
% c_s = \frac{1}{2} \left(\frac{1}{2(\sqrt{3}+\sqrt{6})} \right)^{s} \frac{1}{(\log 2)^{(s-1)/2} (s-1)!}.
% \end{equation*}
% This is worse than the constant in \cite{HM2011}.
% 
% For $s=2$, we can use the estimation
% \begin{equation*}
% \sum_{\satop{\bsk \in \mathbb{Z}^2}{k_1+k_2 = 0}} 2^{-|k_1|/2 - |k_2|/2} = \sum_{k \in \mathbb{Z}} 2^{-|k|} = 3.
% \end{equation*}
% Using this estimation we obtain the constant $c_2 = \frac{1}{72 \sqrt{\log 2} } \approx 0.020037431\ldots$. The constant shown in \cite{HM2011} is $0.038925\ldots$.
% \end{remark}

\subsection{The Rader Transform}

The {\it Rader transform} can be used to permute certain matrices such that the resulting matrices are circulant. Circulant matrices are very useful since a fast matrix-vector multiplication using the fast Fourier transform exists in this case. The Rader transform is used in the fast component-by-component construction of lattice rules (see Section~\ref{secLPS}) and polynomial lattice rules (see Section~\ref{secPafLs}). The Rader transform in the context of the component-by-component construction was introduced in \cite{NCa, NCb, NCc}.

We explain a special case of the Rader transform in the context of lattice rules. Let $N$ be a prime number and let $\omega: \{0, 1, \ldots, N-1\} \to \mathbb{R}$ be an arbitrary mapping. Let $C = (c_{k, \ell})_{1 \le k, \ell < N}$ be the $(N-1) \times (N-1)$ matrix with
\begin{equation*}
c_{k, \ell} = \omega( k \ell \pmod{N}).
\end{equation*}
In the following we show how the Rader transform can be used to obtain permutation matrices $P$ and $Q$ such that $P C Q$ is a circulant matrix. A matrix $D = (d_{k, \ell})$ is {\it circulant} if $d_{k, \ell} = e_{k - \ell \pmod{N-1}}$ for some numbers $e_0, e_1, \ldots, e_{N-2} \in \mathbb{R}$.

Let $\mathbb{F}_N = \{0, 1, \ldots, N-1\}$ be the finite field of order $N$ (we identify the elements in $\mathbb{Z}_N$ with the integers $0, 1, \ldots, N-1$). Then there exists a primitive element $g \in \mathbb{F}_N$, that is, the multiplicative group $\mathbb{F}_N^{\times}$ of $\mathbb{F}_N$ is given by
\begin{equation*}
\mathbb{F}_N^{\times} = \{g^0, g^1, g^2, \ldots, g^{N-2}\}.
\end{equation*}
Note that we always have $g^{N-1} = 1$. Let $D= (d_{k,\ell})$ where
\begin{equation*}
d_{k, \ell} = e_{k-\ell \pmod{N-1}} = \omega(g^{k-\ell} \pmod{N} ).
\end{equation*}

We define now the permutation matrix $\Pi(g) = (\pi_{k,\ell}(g))_{1 \le k, \ell < N}$ by
\begin{equation*}
\pi_{k, \ell}(g) = \left\{\begin{array}{rl} 1 & \mbox{if } \ell = g^k \pmod{N}, \\ 0 & \mbox{otherwise}. \end{array} \right.
\end{equation*}
Then we have
\begin{equation*}
D = \Pi(g) C \Pi(g^{-1})^\top
\end{equation*}
and the matrix $D$ is a circulant matrix, since
\begin{equation*}
d_{k, \ell} = \sum_{u, v=1}^{N-1} \pi_{k, u}(g) c_{u, v} \pi_{\ell, v}(g^{-1}) = c_{g^k, g^{-\ell}} = \omega(g^{k-\ell} \pmod{N}).
\end{equation*}

% Created 8 October 2013; JD
% last changed 13 March 2014; AH  

\section{Harmonic Analysis}\label{sec_har_anal}

Methods from harmonic analysis used in QMC range from basic applications of orthogonality like Parseval's equality and Bessel's inequality to sophisticated tools like Riesz products and Littlewood-Paley theory. In this section we explain some of the tools by showing some central results in simplified settings.

\subsection{Orthogonal Bases - Error Bounds for QMC}\label{OBEB}

Orthogonal bases in $L_2\big([0,1]^s\big)$ useful for the analysis of errors of QMC rules and discrepancy estimates are
\begin{itemize}
	\item the trigonometric bases
	\item Walsh bases
	\item Haar bases.
\end{itemize}   
The first two are systems of characters on $[0,1]^s$ with respect to different group structures which makes them very suitable for the analysis of point sets respecting that group structure (see Section~\ref{grchardual}). The Haar bases have the advantage that the orthogonal functions are local and can be used to characterize function spaces through wavelet decompositions.   

The {\it trigonometric system} contains the {\it trigonometric functions} defined by ${\rm e}_{\bsk}: [0,1)^s \rightarrow \CC$ for $\bsk \in \ZZ^s$ by
$$
{\rm e}_{\bsk}(\bsx)=\exp(2 \pi \icomp \bsk \cdot \bsx) \ \ \mbox{ for } \bsx \in [0,1)^s,
$$
where ``$\cdot$'' denotes the usual inner product in $\RR^s$. The trigonometric system is an orthonormal basis of the Hilbert space $L_2\big([0,1]^s\big)$ whose inner product we denote with 
$\langle \cdot , \cdot \rangle$.
One main application of the trigonometric system in QMC is the error analysis of lattice rules.
Lattices and lattice rules are discussed in more detail in Section~\ref{secLPS}.
Here we consider for simplicity just rank-1 lattice rules, which are of the form 
$$ \cP(\bsg,N)=\left\{ \left\{\frac{n}{N} \bsg\right\}\ : \  n=0,1,\ldots,N-1 \right\}$$
for some $N \in \NN$, $N \ge 2$ and some generator $\bsg \in \ZZ^s$, where the fractional part function $\{\cdot\}$ is applied component-wise.

\subsubsection*{Example: Error analysis of rank-1 lattice rules}

Let $f: \RR^s \rightarrow \CC$ be a 1-periodic function (in each variable) with absolutely convergent Fourier series
$$ f = \sum_{\bsk\in\ZZ^s} \widehat{f} (\bsk) {\rm e}_{\bsk} $$
with the Fourier coefficients $\widehat{f}=\langle f ,  {\rm e}_{\bsk} \rangle$. By periodicity, the rank-1 lattice rule with generator $\bsg$ can be written as
$$ \int_{[0,1]^s} f(\bsx) \dint \bsx \approx \frac{1}{N} \sum_{n=0}^{N-1} f \left( \frac{n\bsg}{N} \right). $$ 
Since the integral is just $\widehat{f} (0)$, we get for the error
\begin {eqnarray*}
  \frac{1}{N} \sum_{n=0}^{N-1} f \left( \frac{n\bsg}{N} \right) - \int_{[0,1]^s} f(\bsx) \dint \bsx 
	&=&
	\frac{1}{N} \sum_{n=0}^{N-1} \sum_{\bsk\in\ZZ^s} \widehat{f} (\bsk) {\rm e}_{\bsk} \left( \frac{n\bsg}{N} \right) - \widehat{f} (0) \\
	&=&
	\sum_{\bsk\in\ZZ^s} \widehat{f} (\bsk) \frac{1}{N} \sum_{n=0}^{N-1}  {\rm e}_{\bsk} \left( \frac{n\bsg}{N} \right) - \widehat{f} (0) \\
	&=&
	\sum_{\bsk\in\ZZ^s \setminus \{0\}} \widehat{f} (\bsk) \frac{1}{N} \sum_{n=0}^{N-1} {\rm e}_{\bsk} \left( \frac{n\bsg}{N} \right).
\end{eqnarray*}
Now a character property of the trigonometric functions (see Lemma~\ref{explatchar} in Section~\ref{grchardual}) implies that the inner sum is 1 if $\bsk \cdot \bsg \equiv 0 \pmod{N}$ and 0 otherwise. Hence
$$ \frac{1}{N} \sum_{n=0}^{N-1} f \left( \frac{n\bsg}{N} \right) - \int_{[0,1]^s} f(\bsx) \dint \bsx = \sum_{\bsk}\widehat{f} (\bsk), $$
where the last sum runs only over those $\bsk \neq \boldsymbol{0}$ with $\bsk \cdot \bsg \equiv 0 \pmod{N}$.
This condition defines the dual lattice (cf. Section~\ref{grchardual}), and the error characterization can be extended accordingly to general lattices.
Smoothness conditions on $f$ can be encoded in decay conditions for the Fourier coefficients. So, to get a small error for the integration of smooth functions, the lattice generator should be chosen such that the dual lattice avoids the Fourier coefficients with large $\bsk$. For more information we refer to \cite{niesiam,slojoe}. \\

As the trigonometric system is well adapted to study lattice  rules, Walsh bases can be similarly used for digital constructions, see Section \ref{secDig}.\\

We now turn to the Haar system. We restrict to the base 2 case, applications of Haar bases in base $b\ge 2$ can be found in \cite{Mar2013,Mar2013a,Mar2013b}.
A {\it dyadic interval} of length $2^{-j}, j\in {\mathbb N}_0,$ in $[0,1)$ is an interval of the form 
$$ I=I_{j,m}:=\left[\frac{m}{2^j},\frac{m+1}{2^j}\right) \ \ \mbox{for } \  m=0,1,\ldots,2^j-1.$$ 
The left and right half of $I=I_{j,m}$ are the dyadic intervals $I^+ = I_{j,m}^+ =I_{j+1,2m}$ and $I^- = I_{j,m}^- =I_{j+1,2m+1}$, respectively. The {\it Haar function} $h_I = h_{j,m}$ with support $I$ 
is the function on $[0,1)$ which is  $+1$ on the left half of $I$, $-1$ on the right half of $I$ and 0 outside of $I$. The $L_\infty$-normalized {\it Haar system} consists of
all Haar functions $h_{j,m}$ with $j\in{\mathbb N}_0$ and  $m=0,1,\ldots,2^j-1$ together with the indicator function $h_{-1,0}$ of $[0,1)$.
Normalized in $L_2([0,1))$ we obtain the {\it orthonormal Haar basis} of $L_2([0,1))$. 

Let ${\mathbb N}_{-1}=\{-1,0,1,2,\ldots\}$ and define ${\mathbb D}_j=\{0,1,\ldots,2^j-1\}$ for $j\in{\mathbb N}_0$ and ${\mathbb D}_{-1}=\{0\}$ for $j=-1$.
For $\bsj=(j_1,\dots,j_s)\in{\mathbb N}_{-1}^s$ and $\bsm=(m_1,\dots,m_s)\in {\mathbb D}_{\bsj} :={\mathbb D}_{j_1}\times \ldots \times {\mathbb D}_{j_s}$, 
the {\it Haar function} $h_{\bsj,\bsm}$
is given as the tensor product 
$$h_{\bsj,\bsm} (x) = h_{j_1,m_1}(x_1)\, \cdots \, h_{j_s,m_s}(x_s) \ \ \ \mbox{ for } \bsx=(x_1,\dots,x_s)\in[0,1)^s.$$
The boxes $$I_{\bsj,\bsm} = I_{j_1,m_1} \times \ldots \times I_{j_s,m_s}$$ are called {\it dyadic boxes}.
Two boxes $I_{\bsj_1,\bsm_1}$ and $I_{\bsj_2,\bsm_2}$ have the {\it same shape} if $\bsj_1=\bsj_2$.
A crucial combinatorial property is that for $\bsj=(j_1,\dots,j_s)\in{\mathbb N}_{0}^s$, there are exactly $2^{j_1+\dots+j_s}$ boxes of that shape which are mutually disjoint.
If we fix the level $\ell=j_1+\dots+j_s$, then there are 
$$ \binom{\ell+s-2}{s-1} \approx_s \ell^{s-1} $$
different shapes of boxes with level $\ell$.

The $L_\infty$-normalized tensor {\it Haar system} consists of all Haar functions $h_{\bsj,\bsm}$ with $\bsj\in{\mathbb N}_{-1}^s$ and  
$\bsm \in {\mathbb D}_j$. Normalized in $L_2([0,1)^s)$ we obtain the {\it orthonormal Haar basis} of $L_2([0,1)^s)$.  

\subsubsection*{Example: Error analysis of QMC with Hammersley point sets}

The Haar coefficients can be used directly to compute and estimate the norm of the discrepancy function.
As an example, we compute the $L_2$-discrepancy (see Section~\ref{subsec_KH_ugl}) of the {\it two-dimensional symmetrized Hammersley type point set} given by
$$
 {\cal R}_n = \Big\{ \Big( \frac{t_n}{2}+\frac{t_{n-1}}{2^2}+\cdots + \frac{t_1}{2^n} ,  
                       \frac{s_{1}}{2}+\frac{s_{2}}{2^2}  +\cdots + \frac{s_n}{2^n} \Big) \ : \ 
                       t_1,\ldots,t_n \in \{0,1\}   \Big\} 
$$
where $s_i=t_i$ if $i$ is even and $s_i=1-t_i$ if $i$ is odd. 
The cardinality of this set is $N=2^n$.
It was shown in \cite{HZ1969} that these sets satisfy the $L_2$ discrepancy estimate
$$ D_N({\cal R}_n, L_2) \ll  \frac{\sqrt{\log N}}{N},$$
which is optimal according to Theorem~\ref{thm:roth} in the next section. An exact formula for $D_N({\cal R}_n, L_2)$ and a generalization of the result can be found in \cite{KP06}.

Direct, but in some cases a little tedious computations, for which we refer to \cite{hin2010}, give the Haar coefficients $\mu_{j,m} = \langle  D_N({\cal R}_n, \, \cdot \,), h_{\bsj,\bsm} \rangle$ as follows:
\begin{lemma}
 \label{lem:HCH}
 Let $\bsj=(j_1,j_2)\in \NN_{0}^2$. 
 Then
 \begin{itemize}
  \item[(i)] if $j_1+j_2<n-1$ and $j_1,j_2\ge 0$ then $|\mu_{\bsj,\bsm}| = 2^{-2(n+1)}$.
  \item[(ii)] if $j_1+j_2\ge n-1$ and $0\le j_1,j_2\le n$ then $|\mu_{\bsj,\bsm}| \le 2^{-(n+j_1+j_2+1)}$ and
     $|\mu_{\bsj,\bsm}| = 2^{-2(j_1+j_2+2)}$ for all but at most $2^n$ coefficients $\mu_{\bsj,\bsm}$ with $\bsm\in {\mathbb D}_{\bsj}$.
  \item[(iii)] if $j_1 \ge n$ or $j_2 \ge n$ then $|\mu_{\bsj,\bsm}| = 2^{-2(j_1+j_2+2)}$.
 \end{itemize} 
 
 Now let $\bsj=(-1,k)$ or $\bsj=(k,-1)$ with $k\in \NN_0$. Then
 \begin{itemize}
  \item[(iv)] if $k<n$ then $|\mu_{\bsj,\bsm}| \le 2^{-(n+k)}$.
  \item[(v)] if $k\ge n$  then $|\mu_{\bsj,\bsm}| = 2^{-(2k+3)}$.
 \end{itemize}
 Finally, 
  \begin{itemize}
   \item[(vi)] $|\mu_{(-1,-1),(0,0)}| = a \, 2^{-(n+3)}  + 2^{-2(n+1)}$ with $a=4$ if $n$ is even and $a=3$ if $n$ is odd.
  \end{itemize} 
\end{lemma}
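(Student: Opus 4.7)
My plan is to compute the Haar coefficients $\mu_{\bsj,\bsm} = \langle D_N(\mathcal{R}_n,\cdot), h_{\bsj,\bsm}\rangle$ by splitting the discrepancy function via $D_N(\mathcal{R}_n,\bsy) = \frac{1}{N}\sum_{\bsz\in\mathcal{R}_n} \mathbf{1}_{B_\bsy}(\bsz) - y_1 y_2$ and writing $\mu_{\bsj,\bsm} = \mu^{\mathrm{cnt}}_{\bsj,\bsm} - \mu^{\mathrm{vol}}_{\bsj,\bsm}$. Fubini converts the counting part into $\mu^{\mathrm{cnt}}_{\bsj,\bsm} = \frac{1}{N}\sum_{\bsz} \phi_{j_1,m_1}(z_1)\phi_{j_2,m_2}(z_2)$, where the antiderivative $\phi_{j,m}(z) := \int_z^1 h_{j,m}(y)\,\dint y$ is, for $j\ge 0$, an inverted tent supported on $I_{j,m}$ with minimum $-2^{-j-1}$ at the midpoint and zeros at both endpoints, while $\phi_{-1,0}(z)=1-z$. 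A one-line computation gives $\int_0^1 y\, h_{j,m}(y)\,\dint y = -2^{-2j-2}$, so $\mu^{\mathrm{vol}}_{\bsj,\bsm} = 2^{-2(j_1+j_2+2)}$ for $\bsj\in\NN_0^2$, with the obvious modifications when a coordinate equals $-1$ (involving the factor $1/2 = \int_0^1 y\,\dint y$).

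Cases (iii) and (v) fall out first. Whenever $\max(j_1,j_2)\ge n$, at least one side of $I_{\bsj,\bsm}$ has length at most $2^{-n}$. Since every point of $\mathcal{R}_n$ has coordinates in $2^{-n}\ZZ\cap[0,1)$, a short arithmetic check shows that such a sub-interval can contain at most its own left endpoint, where $\phi_{j,m}$ already vanishes. Hence $\mu^{\mathrm{cnt}}_{\bsj,\bsm}=0$ and $\mu_{\bsj,\bsm} = -\mu^{\mathrm{vol}}_{\bsj,\bsm}$, which matches the stated absolute values exactly.

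For the deep case (i) (and its siblings (ii) and (iv)) I would exploit the explicit digit-level description of $\mathcal{R}_n$. The box $I_{\bsj,\bsm}$ contains exactly $2^{n-j_1-j_2}$ points, and I would decompose it into dyadic sub-boxes of shape $(n-j_2,n-j_1)$, each containing at most one point. The key identity is that the map $(t_1,\dots,t_n) \mapsto (t_1,\dots,t_{i-1}, 1-t_i, t_{i+1},\dots,t_n)$ pairs points into antipodal positions with respect to the features the two Haar factors actually detect: for $x_1$ (the bit-reversal) it swaps halves of a level-$(n-i+1)$ dyadic interval, and for $x_2$ it either swaps or fixes halves depending on the parity of $i$ through the symmetrization $s_i = t_i$ or $1-t_i$. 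Since $\phi_{j,m}$ is antisymmetric about the midpoint of $I_{j,m}$, these pairings cancel the main contribution and add up precisely to $\mu^{\mathrm{vol}}_{\bsj,\bsm}$, leaving a residual of magnitude $2^{-2(n+1)}$ as in (i). Case (iv) is the same argument with one tent replaced by the ramp $1-z$, and the "exceptional at most $2^n$ coefficients" in (ii) correspond to the boxes of shape $j_1+j_2\in\{n-1,n\}$ whose pairings straddle two adjacent sub-boxes at level $n$ and therefore leave an extra $O(2^{-(n+j_1+j_2+1)})$ residual; on all other $\bsm$ the symmetry argument is intact and the coefficient reduces to the generic value $2^{-2(j_1+j_2+2)}$ exactly as in (iii). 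Case (vi) is a direct evaluation of $\int_{[0,1)^2} D_N(\mathcal{R}_n,\bsy)\,\dint\bsy = \frac{1}{N}\sum_{\bsz}(1-z_1)(1-z_2) - \frac{1}{4}$ via the explicit coordinate formulas, where the parity of $n$ enters through the number of odd-index flips and determines whether $a=3$ or $a=4$.

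The main obstacle is the exact residual in cases (i) and (ii): producing a leading-order cancellation is routine, but pinning down the constants $2^{-2(n+1)}$ and $2^{-(n+j_1+j_2+1)}$ — as opposed to, say, $2\cdot 2^{-2(n+1)}$ — requires careful bookkeeping of how the odd/even digit flip interacts with the bit-reversed first coordinate. Because odd and even digit indices play asymmetric roles in the definition of $s_i$, one must track precisely which digit positions are "seen" by each Haar factor at the given levels $(j_1,j_2)$, and verify that the surviving contribution lands on the stated value. This is the calculational core that the paper delegates to \cite{hin2010}; a conceptual shortcut would be to recognize the residual as the Haar coefficient of a simpler template function (the discrepancy of a centered two-point model within each unresolved sub-box), but I expect the final verification of the numerical constants still demands the direct case analysis.
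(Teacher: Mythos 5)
The paper itself does not prove this lemma: it calls the computations ``direct, but in some cases a little tedious'' and refers to \cite{hin2010}, so there is no in-paper argument to measure you against. Your general setup is the standard and correct one: write $\mu_{\bsj,\bsm}=\mu^{\mathrm{cnt}}_{\bsj,\bsm}-\mu^{\mathrm{vol}}_{\bsj,\bsm}$, compute $\int_0^1 y\,h_{j,m}(y)\,\dint y=-2^{-2j-2}$ so that $\mu^{\mathrm{vol}}_{\bsj,\bsm}=2^{-2(j_1+j_2+2)}$ for $\bsj\in\NN_0^2$ (with the factor $1/2$ for a coordinate equal to $-1$), and express the counting part through the antiderivatives $\phi_{j,m}(z)=\int_z^1 h_{j,m}(y)\,\dint y$. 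Your treatment of (iii) and (v) is complete and correct: for $j\ge n$ the only point of $2^{-n}\ZZ$ that $I_{j,m}$ can contain is its left endpoint $m2^{-j}$ (which requires $2^{j-n}\mid m$), and $\phi_{j,m}$ vanishes there, so $\mu^{\mathrm{cnt}}_{\bsj,\bsm}=0$.

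For the cases that carry the actual content of the lemma, however, there is a genuine gap. First, a technical error: $\phi_{j,m}$ is an inverted tent and is \emph{symmetric}, not antisymmetric, about the midpoint of $I_{j,m}$ (it is $h_{j,m}$ itself that is antisymmetric), so pairing points by reflection about that midpoint produces \emph{equal} contributions, not sign cancellation. The cancellation that actually occurs is of a different nature: since $\int_0^1\phi_{j,m}(z)\,\dint z=-2^{-2j-2}$, one has $\mu^{\mathrm{vol}}_{\bsj,\bsm}=\int_{[0,1]^2}\phi_{j_1,m_1}(z_1)\phi_{j_2,m_2}(z_2)\,\dint\bsz$, so $\mu_{\bsj,\bsm}$ is exactly the QMC quadrature error of the tent product $\phi_{j_1,m_1}\otimes\phi_{j_2,m_2}$ under $\mathcal{R}_n$, and the claimed values are the exact quadrature errors. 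Extracting $2^{-2(n+1)}$ in (i), the bound $2^{-(n+j_1+j_2+1)}$ in (ii) (note that the crude estimate $|\mu^{\mathrm{cnt}}|\le 2\cdot 2^{-n}2^{-j_1-j_2-2}$ plus $\mu^{\mathrm{vol}}$ already overshoots this bound, so even the inequality needs the signs and grid positions tracked), and the values in (iv) and (vi) requires the explicit digit bookkeeping: which of $t_1,\dots,t_n$ are frozen by $\bsm$, where the $2^{n-j_1-j_2}$ free points sit relative to the tents' nodes, and how the flips $s_i=1-t_i$ for odd $i$ shift them. You explicitly defer exactly this verification, and since the lemma \emph{is} these constants (they are what yields the sharp $\sqrt{\log N}/N$ bound downstream), cases (i), (ii), (iv) and (vi) remain unproved in your proposal.
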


Then using these Haar coefficients in Parseval's equality 
$$ D_N({\cal R}_n, L_2)^2 = \sum_{\bsj\in \NN_{-1}^2} \sum_{\bsm\in {\mathbb D}_{\bsj}} \frac{\mu_{\bsj,\bsm}^2}{\|h_{\bsj,\bsm}\|_2^2} $$
gives the upper bound
$$ D_N({\cal R}_n, L_2)^2 \ll \frac{n}{2^{2n}} = \frac{\log N}{ N^2}. $$

Using the Littlewood-Paley inequality, which is explained in Section \ref{LPI}, as replacement for Parseval's equality also provides optimality of the symmetrized Hammersley set for the $L_p$-discrepancy for $1<p<\infty$.
Similarly, optimality can be shown in Besov spaces of dominating mixed smoothness for certain parameter values, as these can be characterized by an equivalent norm via Haar coefficients, see \cite{hin2010,tri2010,tri2010a}. 
For generalizations to higher dimensions, see \cite{Mar2013a,Mar2013b}.
Faber bases can be used to derive error bounds in cases where the Haar functions do not work, see e.g. \cite{tri2014,ull2014}.

\subsection{Orthogonal Functions - Lower Bounds}\label{OFLB}

The crucial idea for proving lower bounds of norms of the discrepancy function is that the contribution of dyadic boxes containing no point can be amplified with the help of orthogonality.
This idea, which is due to Roth \cite{R54}, resonates with the idea of constructing ``bump functions'' which is represented in Section~\ref{subsec_bumbf}. 

\subsubsection*{Example: Roth's lower bound for the $L_2$-discrepancy}

\begin{theorem}[Roth]\label{thm:roth}
 The $L_2$-discrepancy of any $N$-element point set ${\cal P}\subseteq [0,1)^s$ satisfies the lower bound
 $$D_N({\cal P}, L_2) \gg_s  \frac{(\log N)^{(s-1)/2}}{N}. $$
\end{theorem}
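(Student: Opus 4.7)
Roth's classical strategy is to exploit the orthogonality of the tensor Haar basis introduced in Section~\ref{OBEB}. The plan is as follows: at a carefully chosen dyadic resolution a large proportion of dyadic boxes must be empty of points of $\cP$; on each such empty box the Haar coefficient of the discrepancy function $D_N(\cP,\cdot)$ has a fixed nonzero magnitude coming purely from the volume term $y_1\cdots y_s$; and Bessel's inequality then turns the sum of these contributions into a lower bound on $\|D_N(\cP,\cdot)\|_2$.

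I would first choose $r\in\NN$ with $2N\le 2^r<4N$, so $r=\log_2 N+O(1)$, and restrict attention to shape vectors $\bsj=(j_1,\ldots,j_s)\in\NN_0^s$ with $|\bsj|=j_1+\cdots+j_s=r$. There are $\binom{r+s-1}{s-1}\asymp_s r^{s-1}$ such shapes. For each fixed shape, the $2^r$ dyadic boxes $I_{\bsj,\bsm}$, $\bsm\in\DD_{\bsj}$, are pairwise disjoint and each has volume $2^{-r}$. Since $|\cP|=N\le 2^{r-1}$, at least $2^{r-1}$ of these boxes are empty of points of $\cP$.

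Next I would compute the Haar coefficient $\mu_{\bsj,\bsm}=\langle D_N(\cP,\cdot),h_{\bsj,\bsm}\rangle$ on an empty box. The Haar function $h_{\bsj,\bsm}$ is a tensor product and the one-dimensional integral $\int_0^1 h_{j_k,m_k}(y_k)\,\mathbf{1}_{z_k<y_k}\,\mathrm{d}y_k$ vanishes unless $z_k\in I_{j_k,m_k}$, because if $z_k$ lies outside $I_{j_k,m_k}$ then $\mathbf{1}_{z_k<y_k}$ is constant on the support of $h_{j_k,m_k}$. Hence only points $\bsz\in I_{\bsj,\bsm}$ contribute to the counting part of $\mu_{\bsj,\bsm}$, and on an empty box that part is zero. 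The remaining contribution from $-y_1\cdots y_s$ factorizes, and a direct one-dimensional computation gives $\int_0^1 y_k\,h_{j_k,m_k}(y_k)\,\mathrm{d}y_k=-2^{-2j_k-2}$, so that
\begin{equation*}
|\mu_{\bsj,\bsm}|=4^{-s}\,2^{-2r}\quad\text{whenever } I_{\bsj,\bsm}\cap\cP=\emptyset.
\end{equation*}
Since $\|h_{\bsj,\bsm}\|_2^2=2^{-r}$, Bessel's inequality for the orthogonal family $\{h_{\bsj,\bsm}\}$ yields
\begin{equation*}
D_N(\cP,L_2)^2\;\ge\;\sum_{|\bsj|=r}\;\sum_{\substack{\bsm\in\DD_{\bsj}\\ I_{\bsj,\bsm}\cap\cP=\emptyset}}\frac{|\mu_{\bsj,\bsm}|^2}{\|h_{\bsj,\bsm}\|_2^2}\;\ge\;\binom{r+s-1}{s-1}\cdot 2^{r-1}\cdot\frac{16^{-s}\,2^{-4r}}{2^{-r}}\;\gg_s\;\frac{r^{s-1}}{4^r}.
\end{equation*}
Taking square roots and using $2^r\asymp N$ and $r\asymp\log N$ delivers the claimed bound.

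The main obstacle is not conceptual depth but careful bookkeeping of the Haar coefficients. One must verify that the tensor-product structure really does confine the counting contribution of a point $\bsz$ to boxes containing $\bsz$ (so that empty boxes isolate the volume term), and that $\int_0^1 y\,h_{j,m}(y)\,\mathrm{d}y$ is a nonzero constant whose magnitude depends only on $j$, not on $m$, so that the estimate on $|\mu_{\bsj,\bsm}|$ is uniform across a shape. Once these two elementary facts are in place, the logarithmic factor emerges purely combinatorially from the number $\binom{r+s-1}{s-1}\asymp_s r^{s-1}$ of dyadic shapes at level $r$, exactly mirroring the amplification principle underlying the bump-function argument of Section~\ref{subsec_bumbf}.
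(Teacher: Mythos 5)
Your proposal is correct and follows essentially the same route as the paper: fix the dyadic level $r\asymp\log N$ so that at least half the boxes of each shape are empty, observe that on empty boxes the counting part contributes nothing to the Haar coefficient while the volume part contributes a fixed magnitude $\asymp_s 2^{-2r}$, and then sum over the $\asymp_s r^{s-1}$ shapes via Bessel's inequality. The two elementary facts you flag as needing verification are exactly the paper's Lemmas~\ref{levolpart} and~\ref{lecountpart}, and your one-dimensional computations establishing them are accurate.
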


\begin{proof}
Roth used, together with orthogonality, also duality and the Cauchy-Schwarz inequality. We present here a version of the proof which just uses Bessel's inequality and Haar functions.
To this end, we need the inner products of the discrepancy function with the Haar functions. The following two lemmas separately deal with the volume part ${\rm vol} (B_{\bsx})$ and the
counting part $\frac{1}{N} \sum_{\bsz\in{\cal P}} {\mathbf 1}_{B_{\bsx}}(\bsz)$ of the discrepancy  function. 
Both are easy calculations which can be reduced to the one-dimensional case using the product structure of the involved functions.

\begin{lemma}[Volume part]\label{levolpart}
 Let $\bsj=(j_1,\dots,j_s)\in{\mathbb N}_{0}^s$ and $\bsm\in {\mathbb D}_{\bsj}$. Then
 $$ \langle x_1 \cdots x_s , h_{\bsj,\bsm}(\bsx) \rangle = 2^{-2j_1-\cdots-2j_s-2}.$$
\end{lemma}

\begin{lemma}[Counting part]\label{lecountpart}
 Let $\bsj=(j_1,\dots,j_s)\in{\mathbb N}_{0}^s$ and $\bsm\in {\mathbb D}_{\bsj}$. Then
 $$ \langle {\mathbf 1}_{B_{\bsx}}(\bsz) , h_{\bsj,\bsm}(\bsx) \rangle = 0$$
 whenever $\bsz$ is not contained in the dyadic box supporting $h_{\bsj,\bsm}$
\end{lemma}

Now we choose a level $\ell$ such that $ 2^{\ell-1} < 2N \le 2^{\ell}$,  so that $\ell \approx \log N$.
Then, for each shape in level $\ell$, at least half of the $2^{\ell}$ dyadic boxes of this shape do not contain any points of ${\cal P}$.
So, in the computation of the corresponding Haar coefficients of the discrepancy function, the counting part does not count.
Let $S$ be the set of all pairs $(\bsj,\bsm)$ such that $I_{\bsj,\bsm}$ does not contain any points of ${\cal P}$ and is of level $\ell$.
We then obtain from Bessel's inequality and Lemma \ref{levolpart} that
$$ 
   D_N({\cal P}, L_2)^2 \ge \sum_{(\bsj,\bsm)\in S} 2^\ell \langle  D_N({\cal P}, \, \cdot \,), h_{\bsj,\bsm} \rangle^2 = 2^{-3\ell-4} \# S \approx_s 
   %2^{-3\ell-2} 2^{\ell-1} \binom{\ell+s-2}{s-1} 
	 2^{-2\ell}\ell^{s-1}
$$
proving the theorem.
\end{proof}

By taking more care of the number of empty boxes the best known lower bounds for the $L_2$-discrepancy are derived  in \cite{HM2011}.

\subsection{Littlewood-Paley Inequality}\label{LPI}

The Littlewood-Paley inequality provides a tool which can be used to replace Parseval's equality and Bessel's inequality for functions in $L_p(\RR)$ with $1<p<\infty$.
%In this case, we consider the Haar basis  $h_{j,m}$ with $j\in\ZZ$ and  $m\in\ZZ$ on $\RR$, where $h_I=h_{j,m}$ is the Haar function supported on the dyadic interval $$ I=I_{j,m}:=\big[ 2^{-j}m,2^{-j}(m+1)\big).$$
%The Haar basis is an unconditional basis of $L_p(\RR)$ for $1<p<\infty$
It involves the {\em square function} $S(f)$ of a function $f\in L_p([0,1))$ which is given as
$$ S(f) = \left( \sum_{j,m} 2^{2j} \, \langle f , h_{j,m} \rangle^2 \, {\mathbf 1}_{I_{j,m}} \right)^{1/2}.$$

\begin{theorem}[Littlewood-Paley inequality]\label{lpi}
 Let $1<p<\infty$ and let $f\in L_p([0,1))$. Then 
 $$ \| S(f) \|_p \approx_p \| f \|_p.$$
\end{theorem}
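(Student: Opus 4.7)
The plan is to recognise $S(f)$ as essentially the quadratic variation of the dyadic martingale of $f$, whereupon the theorem reduces to the classical Burkholder square function inequality. Let $\mathcal{F}_n$ denote the $\sigma$-algebra generated by the dyadic intervals $\{I_{n,m}:0\le m<2^{n}\}$, set $f_n := \mathbb{E}(f\mid\mathcal{F}_n)$ (so $f_n\to f$ in $L_p$), and let $d_n := f_n-f_{n-1}$ for $n\ge 1$ be the martingale differences. The $L_2$-normalised Haar expansion yields $d_n = \sum_m 2^{\,n-1}\langle f, h_{n-1,m}\rangle\,h_{n-1,m}$. Since Haar functions at a fixed level have pairwise disjoint supports and $h_{n-1,m}^{\,2} = \mathbf{1}_{I_{n-1,m}}$, squaring gives
\begin{equation*}
  d_n^{\,2} \;=\; \sum_m 2^{2(n-1)}\,\langle f,h_{n-1,m}\rangle^{2}\,\mathbf{1}_{I_{n-1,m}},
\end{equation*}
and summing over $n$ together with the contribution from $h_{-1,0}=\mathbf{1}_{[0,1)}$ produces $S(f)^{2} = \tfrac{1}{4}(\mathbb{E} f)^{2}+\sum_{n\ge 1} d_n^{\,2}$.

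With this identification the case $p=2$ is immediate from Parseval. For general $p$ I would randomise: let $(\varepsilon_n)$ be independent Rademacher signs on an auxiliary probability space, form the transformed martingale $T_\varepsilon f := \sum_{n\ge 1}\varepsilon_n d_n$, and apply Khintchine's inequality pointwise at each $x\in[0,1)$ to obtain $\mathbb{E}_\varepsilon|T_\varepsilon f(x)|^{p}\approx_p\bigl(\sum_n d_n(x)^{2}\bigr)^{p/2}$. Integrating gives $\mathbb{E}_\varepsilon\|T_\varepsilon f\|_p^{p}\approx_p \bigl\|(\sum_n d_n^{\,2})^{1/2}\bigr\|_p^{p}$; combining this with $\|S(f)\|_p\approx_p |\mathbb{E} f|+\bigl\|(\sum_n d_n^{\,2})^{1/2}\bigr\|_p$ and the trivial bound $|\mathbb{E} f|\le\|f\|_p$, the theorem reduces to the \emph{uniform} two-sided bound $\|T_\varepsilon f\|_p\approx_p \|f-\mathbb{E} f\|_p$ in the choice of signs.

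The uniform $L_p$-boundedness of $T_\varepsilon$ is the crux of the argument. On $L_2$ it is trivial from Haar orthogonality, and I would pass to $p\neq 2$ via a weak-type $(1,1)$ estimate obtained from a Calder\'on--Zygmund decomposition at height $\lambda$: write $f=g+b$ with $\|g\|_\infty\lesssim \lambda$ and $b=\sum_Q b_Q$, where the $b_Q$ are supported on pairwise disjoint dyadic intervals $Q$ and have zero mean. The key cancellation is that $\langle b_Q,h_{j,m}\rangle$ vanishes whenever $I_{j,m}\supsetneq Q$ (by the mean-zero condition) and whenever $I_{j,m}\cap Q=\emptyset$ (by disjoint supports), so the Haar expansion of $b_Q$ uses only Haar functions with $I_{j,m}\subseteq Q$; consequently $T_\varepsilon b$ is supported in $\bigcup Q$, whose total measure is $\lesssim\|f\|_1/\lambda$. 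Combined with a Chebyshev bound for $T_\varepsilon g$ from the $L_2$ estimate, this yields weak-$(1,1)$; Marcinkiewicz interpolation between this and $L_2$ gives strong $(p,p)$ for $1<p\le 2$, and the self-adjointness $T_\varepsilon^{*}=T_\varepsilon$ extends the bound to $2\le p<\infty$ by duality. The main obstacle is precisely this weak-$(1,1)$ estimate; the martingale identification and the Khintchine reduction are essentially formal bookkeeping.
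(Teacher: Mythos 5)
The paper does not actually prove Theorem~\ref{lpi}; it defers to \cite{bur88,ste93,WG91}, and your argument is precisely the proof found there: the dyadic square function is the martingale square function of the Haar filtration, and the theorem is Burkholder's martingale-transform inequality in this special case. Your bookkeeping is correct: $d_n=\sum_m 2^{n-1}\langle f,h_{n-1,m}\rangle h_{n-1,m}$, disjointness of supports within a level gives $S(f)^2=\tfrac14(\mathbb{E}f)^2+\sum_{n\ge1}d_n^2$ (note this requires the sum defining $S(f)$ to include the level $j=-1$; otherwise the equivalence fails already for constants), the pointwise Khintchine step is standard, and the weak $(1,1)$ bound for $T_\varepsilon$ via the Calder\'on--Zygmund decomposition works exactly as you say, since a mean-zero atom supported on a dyadic interval $Q$ expands only in Haar functions with $I_{j,m}\subseteq Q$, so $T_\varepsilon b$ lives on $\bigcup Q$. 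Marcinkiewicz interpolation and the self-adjointness of $T_\varepsilon$ on the Haar basis then give uniform $L_p$-boundedness for all $1<p<\infty$. The one step worth spelling out is how the \emph{two-sided} bound follows from the uniform upper bound alone: since $\varepsilon_n^2=1$ one has $T_\varepsilon T_\varepsilon f=f-\mathbb{E}f$, hence $\|f-\mathbb{E}f\|_p\le C_p\|T_\varepsilon f\|_p$, which is what the lower estimate $\|f\|_p\ll_p\|S(f)\|_p$ requires; with that remark (and the routine density/truncation argument needed to define $T_\varepsilon$ on $L_1$ for the weak-type estimate) your sketch is complete and correct.
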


Proofs of these inequalities and further details also yielding the right asymptotic behavior of the involved constants can be found in \cite{bur88,ste93,WG91}.
This equivalence of norms between the function and its square function can be generalized to arbitrary dimension $s\in\NN$, see \cite{pip86,ste93}.
This leads to a short direct proof of the lower bound of Schmidt \cite{schm1977} for the $L_p$-discrepancy.

\subsubsection*{Example: Schmidt's lower bound for the $L_p$-discrepancy}

\begin{theorem}[Schmidt]\label{thm:schmidt}
 Let $1<p<\infty$. The $L_p$-discrepancy of any $N$-element point set ${\cal P} \subseteq [0,1)^s$ satisfies the lower bound
 $$ 	D_N({\cal P}, L_p) \gg_{s,p}  \frac{(\log N)^{(s-1)/2}}{N}. $$
\end{theorem}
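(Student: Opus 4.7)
The plan is to mirror the proof of Roth's Theorem~\ref{thm:roth}, replacing Bessel's inequality (valid only for $p=2$) by the Littlewood-Paley equivalence of Theorem~\ref{lpi}, applied in its $s$-dimensional tensor-Haar form from \cite{pip86,ste93}. First, I would choose $\ell\in\NN$ so that $2^{\ell-1} < 2N \leq 2^\ell$, whence $\ell \asymp \log N$ and $2^\ell \ge 2N$. For each shape $\bsj\in\NN_0^s$ with $|\bsj|=\ell$ there are $2^\ell$ mutually disjoint dyadic boxes of that shape, so at least half of them (namely at least $2^{\ell-1}$) are disjoint from $\mathcal{P}$. Let $S$ be the set of those $(\bsj,\bsm)$ with $|\bsj|=\ell$ and $I_{\bsj,\bsm}\cap \mathcal{P}=\emptyset$. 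By Lemma~\ref{lecountpart} the counting part of $D_N(\mathcal{P},\,\cdot\,)$ is orthogonal to $h_{\bsj,\bsm}$ for every $(\bsj,\bsm)\in S$, so Lemma~\ref{levolpart} delivers the uniform estimate $|\langle D_N(\mathcal{P},\,\cdot\,), h_{\bsj,\bsm}\rangle| \asymp_s 2^{-2\ell}$ on $S$.

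Next, apply the multidimensional Littlewood-Paley inequality to $D_N(\mathcal{P},\,\cdot\,)\in L_p([0,1)^s)$ to obtain $\|D_N(\mathcal{P},\,\cdot\,)\|_p \gtrsim_p \|S(D_N(\mathcal{P},\,\cdot\,))\|_p$, where the tensor-Haar square function takes the form
\begin{equation*}
 S(f)(\bsx)^2 \;=\; \sum_{\bsj\in\NN_0^s}\sum_{\bsm\in\DD_{\bsj}} 2^{2|\bsj|}\,\langle f, h_{\bsj,\bsm}\rangle^2\,\mathbf{1}_{I_{\bsj,\bsm}}(\bsx).
\end{equation*}
Discarding all summands except those indexed by $S$ and inserting the Haar-coefficient estimate of the previous paragraph yields the pointwise bound
\begin{equation*}
 S(D_N(\mathcal{P},\,\cdot\,))(\bsx)^2 \;\gtrsim_s\; 2^{-2\ell}\,N_\ell(\bsx),
\end{equation*}
where $N_\ell(\bsx)$ counts the shapes $\bsj$ with $|\bsj|=\ell$ such that the unique box of shape $\bsj$ containing $\bsx$ lies in $S$.

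It remains to bound $\|N_\ell^{1/2}\|_p$ from below. A volume count gives $\int_{[0,1)^s} N_\ell(\bsx)\,\dint\bsx \gtrsim_s \ell^{s-1}$, because at least $2^{\ell-1}$ of the $2^\ell$ boxes of each of the $\binom{\ell+s-1}{s-1}\asymp_s \ell^{s-1}$ shapes of level $\ell$ are empty and each such box has volume $2^{-\ell}$. Together with the trivial pointwise upper bound $N_\ell(\bsx)\le \binom{\ell+s-1}{s-1}\lesssim_s \ell^{s-1}$, this handles both ranges of $p$: for $p\ge 2$ monotonicity of $L_p$-norms on the probability space $[0,1)^s$ gives $\|N_\ell^{1/2}\|_p \ge \|N_\ell^{1/2}\|_2 \gtrsim_s \ell^{(s-1)/2}$, while for $1<p<2$ the elementary chain $\|N_\ell^{1/2}\|_2^2 \le \|N_\ell^{1/2}\|_\infty^{\,2-p}\,\|N_\ell^{1/2}\|_p^{\,p}$ combined with the uniform upper bound yields the same order $\ell^{(s-1)/2}$. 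Piecing everything together,
\begin{equation*}
\|D_N(\mathcal{P},\,\cdot\,)\|_p \;\gtrsim_{s,p}\; 2^{-\ell}\,\ell^{(s-1)/2} \;\asymp\; \frac{(\log N)^{(s-1)/2}}{N},
\end{equation*}
which is the desired lower bound.

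The step I expect to require the most care is not any single calculation but the invocation of the $s$-dimensional Littlewood-Paley inequality in precisely the tensor-product form used above: Theorem~\ref{lpi} as stated is one-dimensional, so one has to iterate it along the coordinate directions (or appeal to a vector-valued version as in \cite{pip86,ste93}) to land on the multi-indexed square function. This is also what forces the restriction $1<p<\infty$ and produces the $p$- and $s$-dependent implied constants reflected in $\gg_{s,p}$.
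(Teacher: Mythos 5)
Your proposal is correct and follows essentially the same route as the paper's proof: the same choice of level $\ell$, the same set $S$ of empty boxes, the same Haar-coefficient computation via Lemmas~\ref{levolpart} and~\ref{lecountpart}, and the same appeal to the $s$-dimensional Littlewood--Paley inequality, reducing everything to a lower bound for $\int N_\ell^{p/2}$ where $N_\ell=\sum_{(\bsj,\bsm)\in S}\mathbf{1}_{I_{\bsj,\bsm}}$. The only (cosmetic) difference is the last step: you interpolate $\|N_\ell^{1/2}\|_p$ between the $L_1$- and $L_\infty$-bounds on $N_\ell$, whereas the paper notes that $N_\ell\ge M/4$ on a set of measure at least $\tfrac14$ by a pigeonhole on the $M\asymp_s\ell^{s-1}$ sets of measure at least $\tfrac12$; both yield the same order $\ell^{(s-1)/2}$.
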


\begin{proof}
 Of course, for $p\ge 2$ this follows immediately from Roth's Theorem \ref{thm:roth}.
 For the general case we proceed as in the proof of that theorem but we use the Littlewood-Paley inequality instead of Bessel's inequality and obtain
 $$ 
   D_N({\cal P}, L_p)^p \gg_{s,p} \int_{[0,1)^d} \Big|\sum_{(\bsj,\bsm)\in S}  2^{2\ell} \langle  D_N({\cal P}, \, \cdot \,), h_{\bsj,\bsm} \rangle^2  {\mathbf 1}_{I_{\bsj,\bsm}}(\bsx) \Big|^{p/2} \,\mathrm{d} \bsx.
 $$
 Now the Haar coefficients from Lemma~\ref{levolpart} and Lemma~\ref{lecountpart} show that  
 $$\langle  D_N({\cal P}, \, \cdot \,), h_{\bsj,\bsm} \rangle = 2^{-2\ell-2}$$
 which implies
 $$
   D_N({\cal P}, L_p)^p \gg_{s,p} 2^{-(\ell+2)p} \int_{[0,1)^d} \Big( \sum_{(\bsj,\bsm)\in S} {\mathbf 1}_{I_{\bsj,\bsm}}(\bsx) \Big)^{p/2} \,\mathrm{d} \bsx.
 $$
 Now observe that for each fixed $\bsj$, the sum $\sum_{\bsm:(\bsj,\bsm)\in S} {\mathbf 1}_{I_{\bsj,\bsm}}(\bsx) $ is the indicator function of a set of measure at least $\frac{1}{2}$.
 Hence 
 $$ \sum_{(\bsj,\bsm)\in S} {\mathbf 1}_{I_{\bsj,\bsm}}(\bsx) = \sum_{k=1}^M {\mathbf 1}_{A_k}(\bsx) $$
 where each $A_k$ has measure at least $\frac{1}{2}$ and 
 $M=\binom{\ell+s-2}{s-1} \approx_s \ell^{s-1} $ is the number of different shapes of boxes with level $\ell$.
 But then $ \sum_{k=1}^M {\mathbf 1}_{A_k}(\bsx) \ge \frac{M}{4} $ on a set of measure at least $\frac14$, so that we obtain
 $$ 
  D_N({\cal P}, L_p)^p \gg_{s,p} 2^{-(\ell+2)p} \, \frac{1}{4} \, \left( \frac{M}{4} \right)^{p/2} \gg_{s,p} \left( \frac{(\log N)^{(s-1)/2}}{N} \right)^p
 $$
 proving the theorem.
\end{proof} 

The Littlewood-Paley decomposition lends itself to the analysis of functions in further function spaces in harmonic analysis like $BMO$ and $\exp(L^\alpha)$, see \cite{BLPV},
Hardy spaces $H_p$ for $0<p<1$, see \cite{lac2008},
and spaces of dominating mixed smoothness, see \cite{hin2010,Mar2013,Mar2013a,Mar2013b}. 
A recent survey of Roth's method and its extensions is \cite{bil2011}.

\subsection{Riesz Products}\label{RP}

The Littlewood-Paley approach from the previous section is not directly applicable to the endpoints $p=1,\infty$.
But {\em Riesz products}, another tool from harmonic analysis, can be used to prove sharp lower bounds in the case $p=1$ and $s=2$.
This approach is due to Hal\'asz \cite{hal1981}.

\subsubsection*{Example: Hal\'asz' lower bound for the $L_1$-discrepancy}

\begin{theorem}[Hal\'asz]\label{thm:halasz}
 The $L_1$-discrepancy of any $N$-element point set ${\cal P}\subseteq [0,1)^2$ satisfies the lower bound
 $$ 	D_N({\cal P}, L_1) \gg  \frac{\sqrt{\log N}}{N}. $$
\end{theorem}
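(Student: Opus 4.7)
The plan is to build a bounded test function $F$ on $[0,1)^2$ out of the Haar functions supported on empty dyadic boxes, and then invoke Hölder's duality
\[
 \| D_N({\cal P},\cdot) \|_1 \ \ge\ \frac{\bigl|\int_{[0,1)^2} D_N({\cal P},\bsx)\, F(\bsx)\,\dint\bsx\bigr|}{\|F\|_\infty}.
\]
The test function will be a complex Riesz product, and the two things to keep track of are that $\|F\|_\infty = O(1)$ and that the linear term in the expansion of $\int D_N F$ is $\gg \sqrt{\log N}/N$.

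First I fix the level $\ell$ with $2^{\ell-1} < 2N \le 2^\ell$, so $\ell \asymp \log N$. For each of the $\ell+1$ shapes $\bsj = (j_1,j_2)$ with $j_1+j_2 = \ell$, let $S_\bsj = \{\bsm\in\DD_\bsj:\, I_{\bsj,\bsm}\cap{\cal P} = \emptyset\}$; since each shape has $2^\ell\ge 2N$ dyadic boxes, $|S_\bsj|\ge 2^{\ell-1}$. Set $f_\bsj = \sum_{\bsm\in S_\bsj} h_{\bsj,\bsm}$, which takes values in $\{-1,0,1\}$ because the $h_{\bsj,\bsm}$ with $\bsj$ fixed have disjoint supports. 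A quick algebraic check that I would carry out first, using the nesting of dyadic intervals, shows that a product $h_{\bsj_1,\bsm_1}\cdots h_{\bsj_k,\bsm_k}$ of $k$ tensor Haar functions at \emph{distinct} shapes of level $\ell$ is either zero or a signed single tensor Haar function of the ``corner'' shape $\bsj^\star = (\max_a j_{a,1},\max_a j_{a,2})$, whose level is at least $\ell+k-1$; in particular such a product is mean-zero.

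Next I form the complex Riesz product
\[
 F(\bsx) \ =\ \prod_{\bsj:\,j_1+j_2=\ell}\bigl(1 + \icomp\gamma\,f_\bsj(\bsx)\bigr), \qquad \gamma = \frac{c}{\sqrt{\ell+1}},
\]
for a small absolute constant $c>0$. Since each $f_\bsj$ is real with $|f_\bsj|\le 1$, $|F|^2 = \prod_\bsj(1+\gamma^2 f_\bsj^2)\le (1+\gamma^2)^{\ell+1}\le e^{c^2}$, so $\|F\|_\infty$ is bounded by an absolute constant. Expanding the product and taking imaginary parts,
\[
 \mathrm{Im}\!\int D_N F \ =\ \sum_{k\ge 1\text{ odd}}(-1)^{(k-1)/2}\gamma^k T_k, \qquad T_k = \sum_{|U|=k}\int D_N\prod_{\bsj\in U} f_\bsj.
\]
The leading term is under immediate control: Lemma~\ref{levolpart} and Lemma~\ref{lecountpart} give $\langle D_N, h_{\bsj,\bsm}\rangle = -2^{-2\ell-2}$ for every $(\bsj,\bsm)\in S$, so
\[
 |T_1| \ \ge\ (\ell+1)\cdot 2^{\ell-1}\cdot 2^{-2\ell-2} \ \asymp\ \frac{\log N}{N},
\]
and hence $\gamma|T_1|\asymp \sqrt{\log N}/N$, which is precisely the target order.

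The hard part will be dominating the tail $\sum_{k\ge 3\text{ odd}}\gamma^k|T_k|$ by $\gamma|T_1|$ for $c$ small enough, so that the main term is not destroyed. For $|U|=k$, the product $\prod_{\bsj\in U} f_\bsj$ is a signed sum of Haar functions on corner boxes of level $\ge \ell+k-1$; the volume part of each relevant Haar coefficient of $D_N$ is then only $O(2^{-2(\ell+k-1)})$, while the counting part survives only for the at most $N$ points of ${\cal P}$ and the few corner boxes touching each point. The plan is to carry out a careful combinatorial bookkeeping of the admissible ``nested chains'' of empty level-$\ell$ boxes compatible with each fixed corner box, separating volume and counting contributions. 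The goal is an estimate of the form $|T_k|\le C^k(\ell+1)^{(k-1)/2}\cdot(\log N)/N$, which combined with $\gamma^k = c^k(\ell+1)^{-k/2}$ converts the tail into a geometric series in $c$ that is absorbed by $\gamma|T_1|$ once $c$ is small. The theorem then follows from $\|D_N\|_1\ge |\mathrm{Im}\!\int D_N F|/\|F\|_\infty\gg \sqrt{\log N}/N$.
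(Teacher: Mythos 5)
Your proposal is correct and follows essentially the same route as the paper's (itself only sketched) proof of Hal\'asz: the same choice of level $\ell$, the same functions $f_{\bsj}$ built from Haar functions on empty boxes, the same complex Riesz product with parameter $\icomp c/\sqrt{\ell+1}$, and the same duality step, with your imaginary-part device playing the role of the paper's subtraction of $1$ from the product. The remainder estimate you defer is exactly the part the paper also leaves unproved; your target bound for $|T_k|$ is sufficient (and in fact a stronger, $k$-uniform bound $|T_k|\ll(\ell+1)2^{-\ell}$ holds, the key point being that a corner box arising from empty level-$\ell$ boxes is itself empty, so the counting part never contributes).
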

 
\begin{proof}[Sketch]
 We again start as in the proof of Theorem \ref{thm:roth} and choose a level $\ell$ such that $ 2^{\ell-1} < 2N \le 2^{\ell}$,  so that $\ell \approx \log N$.
 Observe that the shape of a rectangle in level $\ell$ is now fixed by the parameter $j=j_1$ fixing the size in the first coordinate direction. 
 Now for each such $j=0,1,\dots,\ell$ we add up the Haar functions of all dyadic rectangles $I_{(j,\ell-j),\bsm}$ which do not contain points of ${\cal P}$
 and obtain orthogonal functions $f_0,f_1,\dots,f_\ell$ which only take values $\pm 1$ and $0$. Moreover, since we add up at least $2^{\ell-1}$ such Haar functions, 
 we obtain  from Lemma~\ref{levolpart} and Lemma~\ref{lecountpart} that  
 $$ \langle  D_N({\cal P}, \, \cdot \,), f_j \rangle \ge 2^{\ell-1}  2^{-2\ell-2} = 2^{-\ell-3} \approx \frac1N.$$
 These functions are now used to build up the Riesz product
 $$ F := \prod_{j=0}^\ell \left( 1 + \frac{\icomp c }{ \sqrt{\ell+1} } f_j \right) - 1 = \frac{\icomp c }{ \sqrt{\ell+1}} \sum_{j=0}^\ell f_j + R $$
 with some small $c>0$. Here the function $R$ collects all the products of two and more Haar functions involved. 
 It follows that
 $$ \left| \langle  D_N({\cal P}, \, \cdot \,), F \rangle \right| \ge c \frac{\sqrt{\ell+1}}{2^{l+3}} - \left| \langle  D_N({\cal P}, \, \cdot \,), R \rangle \right|. $$
 Now the property that arbitrary products of the Haar functions involved are again Haar functions on a higher level, one can show that 
 $\left| \langle  D_N({\cal P}, \, \cdot \,), R \rangle \right|$ is small compared with $c \ \frac{\sqrt{\ell+1}}{2^{l+3}}$ if $c$ is chosen sufficiently small, but independent of $N$.
 The second crucial property of $F$ is that
 $$ \|F\|_\infty \le \left| 1 + \frac{\icomp c }{ \sqrt{\ell+1} }  \right|^{\ell+1} +1 =  \left( 1 + \frac{c^2 }{ \ell+1 }  \right)^{\frac{\ell+1}{2}} +1 \le \exp\left(\tfrac{c^2}{2}\right) + 1,$$
 which motivates the use of complex numbers. 
 It follows that
 $$ D_N({\cal P}, L_1) \ge \frac{\left| \langle  D_N({\cal P}, \, \cdot \,), F \rangle \right|}{\|F\|_\infty} \gg \frac{\sqrt{\ell+1}}{2^\ell} \gg \frac{\sqrt{\log N}}{N}. $$
\end{proof}

The proof of Hal\'asz provides the sharp lower bound for the $L_1$-discrepancy in dimension $s=2$. The same bound is the best known lower bound
also for higher dimensions. It is one of the main open problems in discrepancy theory to improve this lower bound. 
Also the correct lower bound of Schmidt \cite{schm1972} for the $L_\infty$-discrepancy in dimension $s=2$ can be proved with this method as was demonstrated by Halasz \cite{hal1981}.

\section{Algebra and Number Theory}\label{sec_alg_num}

Algebra and number theory enter the stage of QMC through the various constructions of point sets with good equidistribution properties, which are required as sample nodes for QMC algorithms, and their analysis. Almost all constructions of point sets and sequences relevant for QMC are based on number theoretic or algebraic concepts. 

\subsection{Lattices}\label{secLPS}

%In this section we give an encyclopedic overview on general construction principles for point sets and sequences in QMC.

%The most common constructions of QMC can be assigned to two general principles:
%\begin{itemize}
% \item The iteration of a map $T:[0,1) \rightarrow [0,1)$, or
% \item a digital construction to a base $b$ where a point $x_n$ of a sequence is obtained from a manipulation of the $b$-adic digit expansion of $n \in \NN_0$.
%\end{itemize}

%\subsubsection*{Iteration of a map} 

%For a map $T:[0,1) \rightarrow [0,1)$ one can construct an infinite sequence $(x_n)_{n \ge 0}$ by $x_n=T^n(x)$ for some $x \in [0,1)$, where $T^0(x)=x$ and $T^n(x)=T(T^{n-1}(x))$ for $n \in \NN$. The most common example is the irrational rotation $T(x)=x+\alpha \pmod{1}$ for some $\alpha \in \RR\setminus\QQ$. Then one obtains the sequence $(\{x+n \alpha\})_{n \ge 0}$, where $\{\cdot\}$ denotes the fractional part function. For $x=0$ this gives a so-called one-dimensional Kronecker sequence. Of course this principle can also be extended to the multi-dimensional case.

Lattices are an important concept in number theory, especially in the geometry of numbers which play also an important role in the construction of point sets and QMC rules.
\begin{definition}
A lattice $L$ in $\RR^s$ is a discrete subset of $\RR^s$ which is closed under addition and subtraction. 
\end{definition}
Note that a lattice contains the origin. For every lattice $L$ in $\RR^s$ there exists a lattice basis which is a set $\{\bsw_1,\bsw_2,\ldots,\bsw_s\}$ of linearly independent vectors such that the lattice consists exactly of all integer linear combinations of $\bsw_1,\bsw_2,\ldots,\bsw_s$. The $s \times s$ matrix $W$ with rows $\bsw_1,\bsw_2,\ldots,\bsw_s$ is called the generator matrix of $L$ and the determinant of $L$ denoted by $\det(L)$ is the absolute value of the determinant of the generator matrix $W$. We note that the lattice bases, and therefore also $W$, are not uniquely determined but it can be shown that $\det(L)$ is an invariant for the lattice $L$.  

Information on lattice rules in the context of QMC can be found in \cite{LP,niesiam,slojoe}. In the following we present the two basic examples.

\subsubsection*{Example: General lattice rules}
For $\bsx, \bsy \in \mathbb{R}^s$ we say that the equivalence relation $\bsx \sim \bsy$ holds iff there exists some $\bsz \in \ZZ^s$ such that $\bsx=\bsy+\bsz$. We define the equivalence classes $\bsx + \mathbb{Z}^s = \{\bsx + \bsz \in \mathbb{R}^s: \bsz \in \mathbb{Z}^s\}$. By $\RR^s/\ZZ^s$ we denote the set of all equivalence classes $\bsx+\mathbb{Z}^s$ of $\RR^s$ modulo $\ZZ^s$, equipped with the addition $(\bsx+\ZZ^s)+(\bsy+\ZZ^s):= (\bsx + \bsy)+\ZZ^s$, where $\bsx+\bsy$ denotes the usual addition in $\RR^s$. With these definitions $\RR^s/\ZZ^s$ becomes an abelian group.
 
Let $L/\ZZ^s$ be any finite subgroup of $\RR^s/\ZZ^s$ and let $\bsx_n+\ZZ^s$ with $\bsx_n \in [0,1)^s$ for $n=0,1,\ldots,N-1$ be the distinct residue classes which form the group $L/\ZZ^s$. Then the set $\{\bsx_0,\bsx_1,\ldots,\bsx_{N-1}\}$ is said to be the node set of the lattice rule $L$. If we view $L=\bigcup_{n=0}^{N-1} (\bsx_n+\ZZ^s)$ as a subset of $\RR^s$, then $L$ is an $s$-dimensional lattice. 

\subsubsection*{Example: Rank-1 lattice rules}
For $N \in \NN$, $N \ge 2$, $s \in \NN$ and $\bsg \in \ZZ^s$ an $N$-element {\it rank-1 lattice point set} $\cP(\bsg,N)=\{\bsx_0,\bsx_1,\ldots,\bsx_{N-1}\}$ is defined by 
\begin{equation}\label{LPS}
\bsx_n=\left\{\frac{n}{N} \bsg\right\}\ \ \ \mbox{ for }\ n=0,1,\ldots,N-1,
\end{equation}
where the fractional part function $\{\cdot\}$ is applied component-wise. QMC rules that use rank-1 lattice point sets as underlying nodes are called {\it (rank-1) lattice rules}. The residue classes $\bsx_n+\ZZ^s=(n/N)\bsg+\ZZ^s$ for $n=0,1,\ldots,N-1$ corresponding to a lattice point set as defined in \eqref{LPS} form a finite cyclic subgroup of the additive group $\RR^s/\ZZ^s$ generated by $(1/N)\bsg+\ZZ^s$. Hence (rank-1) lattice rules are a sub-class of general lattice rules.

Rank-1 lattice point sets can also be viewed as finite versions of {\it Kronecker sequences} $\cS_{\bsalpha}=(\bsx_n)_{n \ge 0}$ which are defined as $$\bsx_n=\{n \bsalpha\}\ \ \ \mbox{ for }\ n \in \NN_0,$$ where $\bsalpha \in \RR^s$ and where the fractional part $\{\cdot\}$ is again applied component-wise. See \cite{DT,kuinie} or Section~\ref{secDioph} for more information. The discrepancy of rank-1 lattice point sets will be discussed in Section~\ref{secExpSum} and the one of Kronecker sequences in Section~\ref{secDioph}.

%\paragraph{Kronecker sequences.} For an integer $s \in \NN$ and for $\bsalpha \in \RR^s$ a {\it Kronecker sequence} $(\bsx_n)_{n \ge 0}$ is defined by $\bsx_n=\{n \bsalpha\}$, where the fractional part $\{\cdot\}$ is applied component-wise. A Kronecker sequence with $\bsalpha=(\alpha_1,\ldots,\alpha_s)$ is uniformly distributed in $[0,1)^s$ if and only if $1,\alpha_1,\ldots,\alpha_s$ are linearly independent over $\QQ$. This well known fact follows immediately from Weyl's criterion. 

%\paragraph{Lattice point sets.}  For $N \in \NN$, $N \ge 2$, $s \in \NN$ and $\bsg \in \ZZ^s$ an $N$-element {\it lattice point set} $\{\bsx_0,\bsx_1,\ldots,\bsx_{N-1}\}$ is defined by 
%\begin{equation}\label{LPS}
%\bsx_n=\left\{\frac{n}{N} \bsg\right\}\ \ \ \mbox{ for }\ n=0,1,\ldots,N-1,
%\end{equation}
%where the fractional part function $\{\cdot\}$ is applied component wise. Lattice point sets are finite versions of Kronecker sequences. It is obvious that it suffices to consider $\bsg$ from the set $G_N^s$, where $G_N=\{0,1,\ldots,N-1\}$. Lattice point sets are particularly important for the integration of smooth one-periodic functions. QMC rules that use lattice point sets as underlying nodes are called {\it lattice rules}.

\subsection{Digital Constructions}\label{secDig}

Digit expansions are a basic concept in number theory which also have applications in QMC or, in more detail, in the construction of QMC points and sequences. 

Let $b \ge 2$ be an integer. Every $n \in \NN_0$ can be expanded in its $b$-adic digit expansion $n=n_0+n_1 b +n_2 b^2 +\cdots$ with $b$-adic digits $n_i \in \cZ_b$, where we set $\cZ_b:=\{0,1,\ldots,b-1\}$. A large class of constructions of QMC point sets is based on manipulations of these $b$-adic digit expansions. We remark that such constructions not only exist for $b$-adic expansions but also for more general expansions such as, e.g., Ostrowski expansions, $\beta$-adic expansions, $Q$-adic expansions, etc. However, the $b$-adic expansions are the most important ones in this context. In the following we present some examples. More information on the following examples can be found in \cite{DP10,LP,niesiam} and the references therein.

\subsubsection*{Example: Van der Corput sequences} For an integer $b \ge 2$ the {\it $b$-adic radical inverse function} $\phi_b:\NN_0 \rightarrow [0,1)$ is defined by $$\phi_b(n)=\frac{n_0}{b}+\frac{n_1}{b^2}+\frac{n_2}{b^3}+\cdots$$ whenever $n \in \NN_0$ has $b$-adic digit expansion $n=n_0+n_1 b+n_2 b^2+\cdots$ (which is of course finite) with all digits $n_j \in \cZ_b$. The {\it $b$-adic van der Corput sequence} is the one-dimensional sequence $\cS_b=(x_n)_{n \ge 0}$, where $x_n=\phi_b(n)$. This sequence is {\it the} prototype of many other digital constructions of point sets and sequences. It is well-known that the discrepancy of van der Corput sequences satisfies $D_N(\cS_b)\ll_b (\log N)/N$ (see, e.g, \cite{DP10,kuinie,LP,niesiam}). 
 
\subsubsection*{Example: Halton sequences} For $s \in \NN$, $s \ge 2$, and for integers $b_1,\ldots,b_s \ge 2$ the {\it Halton sequence $\cS_{b_1,\ldots,b_s}=(\bsx_n)_{n \ge 0}$ in bases $b_1,\ldots,b_s$} is defined by $$\bsx_n=(\phi_{b_1}(n),\ldots,\phi_{b_s}(n))\ \ \ \mbox{ for }\ n=0,1,\ldots,$$ where $\phi_b$ is the $b$-adic radical inverse function. A Halton sequence in bases $b_1,\ldots,b_s$ is uniformly distributed in $[0,1)^s$ if and only if $b_1,\ldots,b_s$ are mutually co-prime. In this case the discrepancy of the Halton sequence satisfies $D_N(\cS_{b_1,\ldots,b_s}) \ll_{s,b_1,\ldots,b_s} (\log N)^s/N$ (see, e.g., \cite{DP10,LP,niesiam}).

\subsubsection*{Example: Hammersley point sets} For $s,N \in \NN$, $s \ge 2$ and for pairwise coprime integers $b_1,\ldots,b_{s-1} \ge 2$ the $N$-element {\it Hammersley point set $\cP_{b_1,\ldots,b_{s-1}}=\{\bsx_0,\bsx_1,\ldots,\bsx_{N-1}\}$ in bases $b_1,\ldots,b_{s-1}$} is defined by $$\bsx_n:=\left(\frac{n}{N},\phi_{b_1}(n),\ldots ,\phi_{b_{s-1}}(n)\right) \ \ \mbox{ for } \ \ n=0,1,\ldots, N-1.$$ If $b_1,\ldots,b_{s-1}$ are mutually co-prime, then the discrepancy of the Hammersley point set satisfies $D_N(\cP_{b_1,\ldots,b_{s-1}}) \ll_{s,b_1,\ldots,b_s} (\log N)^{s-1}/N$ (see, e.g., \cite{DP10,LP,niesiam}).

\subsubsection*{Example: Digital nets} The construction of digital nets %, which has been introduced by Niederreiter~\cite{nie87},
is based on finite rings $R_b$ of order $b$. Here we restrict our discussion to the case where $R_b$ is the finite field $\FF_b$ of prime-power order $b$. First one requires a bijection $\varphi:\cZ_b\rightarrow \FF_b$ and $m \times m$ matrices $C_1,\ldots,C_s$ over $\FF_b$ (one per component). A {\it digital net $\{\bsx_0,\bsx_1,\ldots,\bsx_{b^m-1}\}$ over $\FF_b$} with generating matrices $C_1,\ldots,C_s$ is constructed in the following way: for $n=0,1,\ldots,b^m-1$ write $n$ in its base $b$ expansion $n=n_0+n_1 b +\cdots +n_{m-1} b^{m-1}$ with digits $n_j \in \cZ_b$. For $j \in [s]$ compute the matrix vector product $$C_j \left(\begin{array}{c} \varphi(n_0)\\ \varphi(n_1)\\ \vdots \\ \varphi(n_{m-1})\end{array}\right)=:\left(\begin{array}{c} \overline{y}_{n,j,1}\\ \overline{y}_{n,j,2}\\ \vdots \\ \overline{y}_{n,j,m}\end{array}\right),$$ where all arithmetic operations are carried out in $\FF_b$, set $$x_{n,j}:=\frac{\varphi^{-1}(\overline{y}_{n,j,1})}{b}+\frac{\varphi^{-1}(\overline{y}_{n,j,2})}{b^2}+\cdots +\frac{\varphi^{-1}(\overline{y}_{n,j,m})}{b^m}$$ and put $$\bsx_n:=(x_{n,1},\ldots ,x_{n,s}).$$ If the order $b$ of the underlying finite field is a prime number, then one often identifies $\FF_b$ with the set $\cZ_b$ equipped with arithmetic modulo $b$. In this case it is convenient to choose the identity for the bijection $\varphi$ .

Depending on the choice of the generating matrices, digital nets can achieve a discrepancy of order $(\log N)^{s-1}/N$. We refer to \cite{DP10,LP,niesiam} for more information on the discrepancy of digital nets.

\subsubsection*{Example: Digital sequences} The construction of digital sequences over $\FF_b$ is analogous to the one of digital nets over $\FF_b$ with the difference that one requires $C_1,\ldots,C_s$ to be $\NN \times \NN$ matrices over $\FF_b$. For technical reasons the bijection $\varphi$ has to map $0$ to the zero element of $\FF_b$. For every $m \in \NN$ the initial $b^m$ elements of a digital sequence form a digital net with $b^m$ elements.

Depending on the choice of the generating matrices, digital sequences can achieve a discrepancy of order $(\log N)^{s}/N$ for all $N \ge 2$. We refer to \cite{DP10,LP,niesiam} for more information on the discrepancy of digital sequences.

\subsection{Polynomial Arithmetic and Formal Laurent Series}\label{secPafLs}

Polynomial arithmetic and formal Laurent series over a finite field play also an important role in the construction of QMC point sets and sequences. 
 
Let $b$ be a prime power and let $\FF_b$ be the finite field of order $b$. If $b$ is a prime number, then we identify $\FF_b$ with the set $\cZ_b=\{0,1,\ldots,b-1\}$ equipped with arithmetic operations modulo $b$. Let $\FF_b[x]$ be the set of all polynomials over $\FF_b$ and let $\FF_b((x^{-1}))$ be the field of formal Laurent series $$g=\sum_{k=w}^{\infty} a_k x^{-k}\ \ \ \mbox{ with } a_k \in \FF_b \ \mbox{ and } \ w \in \ZZ \ \mbox{ with }\ a_w \not=0.$$ For $g \in \FF_b((x^{-1}))$ and $m \in \NN \cup \{\infty\}$ we define the ``fractional part'' function $\FF_b((x^{-1})) \rightarrow [0,1)$ by $$\{g\}_{b,m}:= \sum_{k=\max(1,w)}^{m} a_k b^{-k}.$$ 

In the following we present some examples of constructions based on the concepts of polynomial arithmetic and formal Laurent series. More information can be found in \cite{DP10,niesiam}.

\subsubsection*{Example: Polynomial lattice point sets} 
Let $m \in \NN$ and let $b$ be a prime number. Given a $p \in \FF_b[x]$ with $\deg(p)=m$ and $\bsq=(q_1,\ldots ,q_s) \in \FF_b[x]^s$ a polynomial lattice point set $\cP(\bsq,p)$ is given by the points $$\bsx_h=\left(\left\{\frac{h q_1}{p}\right\}_{b,m},\ldots ,\left\{\frac{h q_s}{p}\right\}_{b,m}\right),$$ for $h \in \FF_b[x]$ with $\deg(h)<m$. QMC rules that use polynomial lattice point sets as underlying nodes are called {\it polynomial lattice rules}. 

Polynomial lattice point sets have been first introduced by Niederreiter \cite{nie92} and can be viewed as polynomial analogs of lattice point sets (see Section~\ref{secLPS}). They are also special instances of digital nets over $\FF_b$ where the generating matrices $C_1,C_2,\ldots,C_s$ are constructed as follows: choose $p \in \FF_b[x]$ with $\deg(p)=m \ge 1$ and let $\bsq=(q_1,\ldots ,q_s) \in \FF_b[x]^s$. For $j=1,2,\ldots,s$, consider the formal Laurent series expansions $$\frac{q_j(x)}{p(x)}=\sum_{l=w_j}^{\infty} \frac{u_l^{(j)}}{x^l} \in \FF_b((x^{-1}))$$ where $w_j \le 1$, and put $C_j=(c_{i,r}^{(j)})_{i,r=1}^m$ where the elements $c_{i,r}^{(j)}$ of the matrix $C_j$ are given by $c_{i,r}^{(j)}=u_{r+i-1}^{(j)}\in \FF_b$ for $j=1, \ldots,s$ and $i,r=1, \ldots, m$. The latter viewpoint also allows for constructions of ``polynomial lattice point sets'' in the prime-power base case.

For prime $b$ it is known that for any $p\in\FF_b [x]$ with the property $p(x)=x^m$ or $\gcd(p,x)=1$ and $\deg(p)=m$ there exists a generating vector $\bsq \in \FF_b[x]^s$ such that
\[D_N(\cP(\bsq,p))\ll_{s,b} \frac{(\log N)^{s-1} \log \log N}{N}.\] See \cite{KP12,lar93} for more information.

\subsubsection*{Example: Digital Kronecker sequences}

Let $b$ be a prime number. For every $s$-tuple $\bsf=(f_1,\ldots,f_s)$ of elements of $\FF_b((x^{-1}))$ we define the sequence $\cS(\bsf)=(\bsx_n)_{n \ge 0}$  by $$\bsx_n=(\{n f_1\}_b,\ldots,\{n f_s\}_b)\ \ \mbox{ for }\ \ \ n \in \NN_0,$$ where we associate a nonnegative integer $n$ with $b$-adic expansion $n=n_0+n_1b+\cdots+n_r b^r$ with the polynomial $n(x)=n_0+n_1 x+\cdots +n_r x^r$ in $\FF_b[x]$ and vice versa and where $\{g\}_b:=\{g\}_{b,\infty}$. The sequence $\cS(\bsf)$ can be viewed as an analogue of the classical Kronecker sequence and is therefore called a {\it digital Kronecker sequence}.

Digital Kronecker sequences are special examples of digital sequences (see Section~\ref{secDig}). Consider $\bsf=(f_1,\ldots,f_s)$ with $f_j=\frac{f_{j,1}}{x}+\frac{f_{j,2}}{x^2}+\frac{f_{j,3}}{x^3}+\cdots \in \FF_b((x^{-1}))$. Then the digital Kronecker sequence $\cS(\bsf)$ is a digital sequence generated by the $\NN \times \NN$ matrices $C_1,\ldots,C_s$ over $\FF_b$ given by $$C_j=\left(
\begin{array}{llll}
f_{j,1} & f_{j,2} & f_{j,3} & \ldots\\
f_{j,2} & f_{j,3} & f_{j,4} & \ldots\\
f_{j,3} & f_{j,4} & f_{j,5} & \ldots\\ 
\multicolumn{4}{c}\dotfill 
\end{array}\right).$$

\subsubsection*{Example: Generalized Niederreiter sequences.} These are special instances of digital sequences over $\FF_b$ where the generating matrices $C_1,C_2,\ldots,C_s$ are constructed as follows: let $p_1,\ldots, p_s \in \FF_b[x]$ be distinct monic irreducible polynomials over $\FF_b$. For each $i \in \NN$ and $j=1,2,\ldots,s$ choose a set of polynomials $\{y_{j,i,k}(x) :0 \le k < e_j\}$ which has to be linearly independent modulo $p_j(x)$ over $\FF_b$. Consider the expansion
\begin{equation*}
\frac{y_{j,i,k}(x)}{p_j(x)^i} = \sum_{r=1}^\infty \frac{a^{(j)}(i,k,r)}{x^r}
\end{equation*}
over $\FF_b((x^{-1}))$ and define the matrix $C_j= (c^{(j)}_{i,r})_{i,r \in \NN}$ by
\begin{equation*}
c_{i,r}^{(j)} = a^{(j)}(Q+1,k,r) \in \FF_b \quad\mbox{for } \ j \in [s],\ i,r \in \NN,
\end{equation*}
where $i-1 = Q e_j + k$ with integers $Q = Q(j,i)$ and $k = k(j,i)$ satisfying $0 \le k < e_j$. Generalized Niederreiter sequences comprise Sobol'-, Faure- and Niederreiter-sequences as special cases.

\subsection{Groups, Characters and Duality}\label{grchardual}

Let $(G,\circ)$ be a finite abelian group. A {\it character of $G$} is a group homomorphism $\chi:G \rightarrow \CC^{\times}$, that is, for all $x,y\in G$ we have $\chi(x \circ y)=\chi(x) \chi(y)$. This already implies $\chi(1_G)=1$, where $1_G$ is the identity in $G$. Every finite abelian group of order $N$ has exactly $N$ distinct characters denoted by $\chi_0,\chi_1,\ldots,\chi_{N-1}$ where the character $\chi_0\equiv 1$, which is 1 for all $x \in G$, is called the {\it trivial character} or the {\it principal character}. The set $\widehat{G}$ of all characters of $G$ forms an abelian group under the multiplication $(\chi\psi)(x)=\chi(x)\psi(x)$ for all $x \in G$, for $\chi,\psi \in \widehat{G}$.

Characters have the following important property which can be exploited in many applications. 

\begin{lemma}[Character properties]\label{lecharprop}
Let $\chi$ be a character of a finite abelian group $(G,\circ)$. Then we have $$\sum_{x \in G} \chi(x)=\left\{
\begin{array}{ll}
|G| & \mbox{ if $\chi$ is the trivial character},\\
0 & \mbox{ otherwise}. 
\end{array}\right.$$
Let $x \in G$. Then we have $$\sum_{\chi \in \widehat{G}} \chi(x)=\left\{
\begin{array}{ll}
|\widehat{G}| & \mbox{ if $x=1_G$},\\
0 & \mbox{ otherwise}. 
\end{array}\right.$$
\end{lemma}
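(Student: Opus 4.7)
My plan is to prove the two identities in parallel, since they follow from the same ``averaging'' trick once one has the right substitution lemma, but they rely on structural facts of different depths.

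For the first identity, I would begin by noting that if $\chi$ is the trivial character then $\chi(x) = 1$ for every $x \in G$, so the sum trivially equals $|G|$. Assume now that $\chi$ is non-trivial, so there exists some $y \in G$ with $\chi(y) \neq 1$. The key observation is that the map $x \mapsto y \circ x$ is a bijection of $G$, so
\begin{equation*}
\chi(y) \sum_{x \in G} \chi(x) = \sum_{x \in G} \chi(y)\chi(x) = \sum_{x \in G} \chi(y \circ x) = \sum_{z \in G} \chi(z).
\end{equation*}
Thus $(\chi(y) - 1)\sum_{x \in G} \chi(x) = 0$, and since $\chi(y) \neq 1$ we conclude $\sum_{x \in G}\chi(x) = 0$.

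For the second identity I would use exactly the same idea but in the dual group. If $x = 1_G$, then $\chi(1_G) = 1$ for every $\chi \in \widehat{G}$, so the sum equals $|\widehat{G}|$. If $x \neq 1_G$, the argument runs verbatim provided one can exhibit a character $\psi \in \widehat{G}$ with $\psi(x) \neq 1$: then $\chi \mapsto \psi\chi$ is a bijection of $\widehat{G}$, and
\begin{equation*}
\psi(x) \sum_{\chi \in \widehat{G}} \chi(x) = \sum_{\chi \in \widehat{G}} (\psi\chi)(x) = \sum_{\chi' \in \widehat{G}} \chi'(x),
\end{equation*}
forcing the sum to vanish.

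The main obstacle is therefore producing the separating character $\psi$, which is not formal — it uses the structure of finite abelian groups. I would invoke the structure theorem to write $G \cong \ZZ/n_1\ZZ \times \cdots \times \ZZ/n_r\ZZ$ with generators $e_1,\ldots,e_r$ of the cyclic factors, and observe that the maps $\chi_{k_1,\ldots,k_r}(a_1 e_1 + \cdots + a_r e_r) = \prod_{j=1}^{r} \exp(2\pi \icomp k_j a_j/n_j)$ for $0 \le k_j < n_j$ give $|G|$ distinct characters of $G$. In particular, if $x = (a_1,\ldots,a_r) \neq 1_G$ then some $a_j \neq 0 \pmod{n_j}$ and the choice $k_j = 1$, $k_i = 0$ for $i \neq j$ produces a $\psi$ with $\psi(x) = \exp(2\pi \icomp a_j/n_j) \neq 1$. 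This both supplies the missing separating character and establishes $|\widehat{G}| = |G|$, completing the proof.
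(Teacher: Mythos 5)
Your proof is correct and follows essentially the same argument as the paper: for a non-trivial character one picks $a$ with $\chi(a)\neq 1$ and uses the translation identity $\chi(a)\sum_{x\in G}\chi(x)=\sum_{x\in G}\chi(a\circ x)=\sum_{x\in G}\chi(x)$. The paper disposes of the second identity with ``the second one follows by a similar reasoning,'' while you rightly observe that this step is not purely formal --- it needs a separating character $\psi$ with $\psi(x)\neq 1$ for $x\neq 1_G$ --- and you supply it via the structure theorem, which is a worthwhile clarification of a detail the paper leaves implicit.
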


\begin{proof}
We just prove the first identity, the second one follows by a similar reasoning. The result is clear when $\chi$ is the trivial character. Otherwise there exists some $a \in G$ for which we have $\chi(a) \not=1$. Then we have
$$\chi(a) \sum_{x \in G} \chi(x)=\sum_{x \in G} \chi(a \circ x)=\sum_{x \in G}\chi(x),$$ since as $x$ runs through all elements of $G$ so does $a\circ x$. Hence we have $$(\chi(a)-1)\sum_{x \in G}\chi(x)=0$$ and the result follows since $\chi(a)\not=1$. 
\end{proof}

More information on characters of finite abelian groups can be found in \cite[Chapter~5, Section~1]{LidNie}. Many constructions of QMC point sets have an inherent group structure and for these instances the above character property is an important tool for their analysis.  We present the two most important examples.

\subsubsection*{Example: General lattice rules}  

Let $L/\ZZ^s$ be any finite subgroup of $\RR^s/\ZZ^s$ and let $\{\bsx_0,\bsx_1,\ldots,\bsx_{N-1}\}$ be the node set of the lattice rule $L$ (see Section~\ref{secLPS}). Recall the definition of the $\bsk$th trigonometric functions ${\rm e}_{\bsk}: [0,1)^s \to \CC$ from Section~\ref{OBEB} given by  
\begin{equation}\label{deftrigfct}
{\rm e}_{\bsk}(\bsx)=\exp(2 \pi \icomp \bsk \cdot \bsx) \ \ \mbox{ for } \bsx \in [0,1)^s.
\end{equation}
Then $\chi_{\bsk}(\bsx+\ZZ^s)={\rm e}_{\bsk}(\bsx)$ for $\bsx \in L$ is a well-defined character of the additive group $L/\ZZ^s$. This character is trivial if and only if $\bsk \in L^\bot$, where $$L^\bot= \{\bsh \in \ZZ^s \ : \ \bsh \cdot \bsx \in \ZZ \mbox{ for all } \bsx \in L\}.$$ For rank-1 lattice point sets as defined in \eqref{LPS} it is clear that 
\begin{equation}\label{def_duallatt_rank1}
L^\bot= \{\bsh \in \ZZ^s \ : \ \bsh \cdot \bsg \equiv 0 \pmod{N}\}.
\end{equation}
The set $L^\bot$ is again a lattice in $\RR^s$ which is called the {\it dual lattice of $L$}.

Now Lemma~\ref{lecharprop} yields the following important result:
\begin{lemma}\label{explatchar}
Let $\{\bsx_0,\bsx_1,\ldots,\bsx_{N-1}\}$ be the node set of an $N$-element lattice rule $L$. Then for $\bsk \in \ZZ^s$ we have $$\sum_{n=0}^{N-1} {\rm e}_{\bsk}(\bsx_n)= \left\{
\begin{array}{ll}
N & \mbox{ if } \bsk \in L^\bot,\\
0 & \mbox{ if } \bsk \not\in L^\bot. 
\end{array}\right.$$
\end{lemma}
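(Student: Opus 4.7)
The plan is to reduce the claim directly to the character sum identity in Lemma~\ref{lecharprop}. All the structural work has already been done in the paragraph immediately preceding the lemma: the residue classes $\bsx_0 + \ZZ^s, \ldots, \bsx_{N-1} + \ZZ^s$ are precisely the distinct elements of the finite abelian group $L/\ZZ^s$, and the map $\chi_{\bsk}: L/\ZZ^s \to \CC^{\times}$ defined by $\chi_{\bsk}(\bsx+\ZZ^s) = {\rm e}_{\bsk}(\bsx) = \exp(2\pi\icomp \bsk\cdot\bsx)$ is a well-defined character, which is trivial if and only if $\bsk \in L^\bot$.

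First I would rewrite the sum in terms of the character by observing
\begin{equation*}
\sum_{n=0}^{N-1} {\rm e}_{\bsk}(\bsx_n) \;=\; \sum_{n=0}^{N-1} \chi_{\bsk}(\bsx_n + \ZZ^s) \;=\; \sum_{y \in L/\ZZ^s} \chi_{\bsk}(y),
\end{equation*}
where the last equality uses that the $\bsx_n + \ZZ^s$ enumerate $L/\ZZ^s$ without repetition. Then I would apply the first identity in Lemma~\ref{lecharprop} to the finite abelian group $G = L/\ZZ^s$ of order $N$: the sum equals $|L/\ZZ^s| = N$ when $\chi_{\bsk}$ is the trivial character of $L/\ZZ^s$, and equals $0$ otherwise.

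To finish, I would invoke the characterization of the dual lattice: $\chi_{\bsk}$ is trivial on $L/\ZZ^s$ precisely when ${\rm e}_{\bsk}(\bsx) = 1$ for every $\bsx \in L$, i.e., when $\bsk\cdot\bsx \in \ZZ$ for all $\bsx \in L$, which by definition is exactly $\bsk \in L^\bot$. Splitting into the two cases $\bsk \in L^\bot$ and $\bsk \notin L^\bot$ then gives the stated formula.

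There is no real obstacle here; the only point that deserves a sentence of care is the verification that $\chi_{\bsk}$ is well-defined on cosets (i.e., independent of the representative in $\bsx + \ZZ^s$), which holds because ${\rm e}_{\bsk}(\bsx + \bsz) = {\rm e}_{\bsk}(\bsx)$ for $\bsz \in \ZZ^s$. Everything else is a direct bookkeeping application of Lemma~\ref{lecharprop}.
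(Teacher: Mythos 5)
Your proposal is correct and follows exactly the route the paper intends: the paper simply states that Lemma~\ref{lecharprop} applied to the group $L/\ZZ^s$ with the character $\chi_{\bsk}$ (trivial iff $\bsk\in L^\bot$) yields the result, and you have filled in precisely those details, including the well-definedness of $\chi_{\bsk}$ on cosets. Nothing is missing.
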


This basic property is exploited in the analysis of the worst-case error of lattice rules (see Section~\ref{OBEB} and \cite[Chapter~5]{niesiam}) or of discrepancy estimates of the corresponding node sets (see Section~\ref{secExpSum}).

\subsubsection*{Example: Digital nets} Let $b$ be a prime-power and let $\varphi:\cZ_b\rightarrow \FF_b$ be a bijection with $\varphi(0)=\overline{0}$ be fixed. For $x,y \in [0,1)$ let $x=\frac{\xi_{1}}{b}+\frac{\xi_{2}}{b^{2}}+\cdots$ and $y=\frac{\eta_{1}}{b}+\frac{\eta_{2}}{b^{2}}+\cdots$ be their $b$-adic expansions (with $\xi_{i} \not= b-1$ for infinitely many $i$ and $\eta_{j} \not = b-1$ for infinitely many $j$). Then $x \oplus y := \frac{\zeta_{1}}{b}+\frac{\zeta_{2}}{b^{2}}+\cdots$ with \[\zeta_{j} = \varphi^{-1}(\varphi(\xi_{j}) +\varphi(\eta_{j})) \;\;\mbox{ for }\;\; j \in \NN.\] (A case which has to be excluded is, for instance (for prime $b$, $\cZ_b=\FF_b$ and $\varphi={\rm id}$), when $x = (b-1)(b^{-1} + b^{-3} + b^{-5} + \cdots)$ and $y = (b-1) (b^{-2} + b^{-4} + b^{-6} + \cdots)$. In this case $x \oplus y = (b-1) (b^{-1} + b^{-2} + b^{-3} + \cdots) = 1$.) For vectors $\bsx,\bsy \in [0,1)^s$ the $b$-adic addition $\bsx \oplus \bsy$ is defined component-wise. Note that in this way $\oplus$ is defined for almost all $\bsx,\bsy \in [0,1)^s$. 

Let $\Dcal=\{\bsx_0,\bsx_1,\ldots,\bsx_{b^m-1}\}$ be a digital net over $\FF_b$ with $m \times m$ generating matrices $C_1,\ldots ,C_s$ as defined in Section~\ref{secDig}. Any vector $\bfn=(\overline{n}_{0},\overline{n}_1,\ldots, \overline{n}_{m-1})^{\top} \in \FF_b^m$ uniquely represents an integer $n := n_{0}+ n_{1}b + \dots + n_{m-1}b^{m-1}$ from $\{0,
\ldots ,b^{m}-1\}$ via $n_i =\varphi^{-1}(\overline{n}_i)$ for $i=0,1,\ldots, m-1$, and to any such integer belongs an element $\bsx_n$ of $\Dcal$. Then the mapping
\begin{eqnarray*}
\Psi: \FF_b^{m} \rightarrow \Dcal,\ \ \bfn \mapsto  \bsx_n
\end{eqnarray*}
is a group-isomorphism from the additive group of $\FF_b^{m}$ to $\Dcal$. In fact, for $\bfn,\bfl \in \FF_b^m$ the property $\Psi(\bfn +
\bfl)=\Psi(\bfn) \oplus \Psi(\bfl)$ easily follows from the fact that for any $m \times m$ matrix $C$ over $\FF_b$ we have $C(\bfn + \bfl) = C\bfn + C \bfl$. Therefore we have:

\begin{lemma}\label{lem3.42}
Any digital net $(\Dcal,\oplus)$ is a finite abelian group.
\end{lemma}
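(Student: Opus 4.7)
The plan is to verify directly that the map $\Psi \colon (\FF_b^m,+) \to (\Dcal,\oplus)$ defined in the excerpt is a group isomorphism. Since $(\FF_b^m,+)$ is already a finite abelian group, the transport of structure via $\Psi$ immediately gives the conclusion. So the work reduces to two things: bijectivity and the homomorphism identity $\Psi(\bfn+\bfl)=\Psi(\bfn)\oplus\Psi(\bfl)$.

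Bijectivity is essentially by definition of a digital net with $b^m$ points: every $\bfn \in \FF_b^m$ corresponds (through the fixed bijection $\varphi$ applied digit by digit) to a unique integer $n \in \{0,1,\ldots,b^m-1\}$, and the net assigns to each such $n$ exactly one point $\bsx_n$. Hence I would first state that $\Psi$ is well-defined and bijective, noting that there are no ambiguous $b$-adic expansions since every coordinate of $\bsx_n$ is a finite sum with at most $m$ digits, which sidesteps the $(b-1,b-1,\ldots)$ issue raised earlier in the subsection.

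For the homomorphism property I would fix $\bfn,\bfl \in \FF_b^m$ and compare $\Psi(\bfn+\bfl)$ with $\Psi(\bfn)\oplus\Psi(\bfl)$ coordinate by coordinate. For the $j$th coordinate, the digits of $\Psi(\bfn)_j$ are the entries of the vector $C_j \bfn \in \FF_b^m$ (pulled back through $\varphi^{-1}$), and similarly for $\Psi(\bfl)_j$. By definition, the $b$-adic digitwise addition $\Psi(\bfn)_j \oplus \Psi(\bfl)_j$ uses the rule $\zeta_i = \varphi^{-1}(\varphi(\xi_i)+\varphi(\eta_i))$; this is precisely componentwise addition in $\FF_b$ after applying $\varphi$. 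On the other side, by $\FF_b$-linearity of the generating matrix, $C_j(\bfn+\bfl) = C_j\bfn + C_j\bfl$ in $\FF_b^m$. Thus both sides produce the same digit string in $\FF_b^m$, and translating back via $\varphi^{-1}$ gives equality in $[0,1)$. Taking the product over $j=1,\ldots,s$ yields $\Psi(\bfn+\bfl) = \Psi(\bfn) \oplus \Psi(\bfl)$.

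Because $\Psi$ is a bijection and a homomorphism from an abelian group, the operation $\oplus$ restricted to $\Dcal$ is closed (this is the only nontrivial closure statement, since a priori $\oplus$ is only defined pointwise on $[0,1)^s$), associative, commutative, has identity $\Psi(\boldsymbol{0}) = \bsx_0 = \boldsymbol{0}$, and admits inverses $\Psi(-\bfn)$. The main (and only) subtle point is the closure of $\oplus$ on $\Dcal$ and the fact that no exceptional carrying occurs; this is handled by observing that all digit expansions of points in $\Dcal$ terminate after position $m$, so the digitwise definition of $\oplus$ coincides cleanly with addition of digit vectors in $\FF_b^m$. Hence $(\Dcal,\oplus)$ is a finite abelian group, isomorphic to $(\FF_b^m,+)$. \qed
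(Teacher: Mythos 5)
Your proposal is correct and follows essentially the same route as the paper, which establishes the lemma by exhibiting the map $\Psi:\FF_b^m\to\Dcal$, $\bfn\mapsto\bsx_n$, as a group isomorphism whose homomorphism property rests on the linearity $C_j(\bfn+\bfl)=C_j\bfn+C_j\bfl$ over $\FF_b$. Your additional remark that the finite (at most $m$-digit) expansions of net points avoid the exceptional cases in the definition of $\oplus$ is a welcome clarification of a point the paper leaves implicit.
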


For the sake of simplicity let in the following $b$ be a prime number and identify the finite field $\FF_b$ with $\cZ_b$ and choose $\varphi={\rm id}$.

For $k \in \NN_0$ with $b$-adic expansion $k=\kappa_0+\kappa_1 b+\kappa_2 b^2+\cdots$, where $\kappa_i \in \cZ_b$, the {\it $k$th $b$-adic Walsh function} $\walb_k: [0,1) \rightarrow \CC$ is defined as $$\walb_k(x)=\exp(2 \pi \icomp (\kappa_0  \xi_1+\kappa_1 \xi_2+\kappa_2 \xi_3+\cdots)/b),$$ for $x \in[0,1)$ with $b$-adic expansion $x=\xi_1 b^{-1}+\xi_2 b^{-2}+\xi_3 b^{-3}+\cdots$ (unique in the sense that infinitely many of the digits $\xi_i$ must be different from $b-1$). For vectors $\bsk = (k_1, \ldots, k_s) \in \NN_0^s$ and $\bsx = (x_1,\ldots,x_s) \in [0,1)^s$ we write
\[
   \walb_{\bsk}(\bsx) :=\walb_{k_1,\ldots,k_s}(x_1,\ldots,x_s)=\prod_{j=1}^s \walb_{k_j}(x_j).
\]
The system $\{\walb_{\bsk} \, : \, \bsk \in \NN_0^s\}$ is called the {\it $s$-dimensional $b$-adic Walsh function system}.\\
For all $\bsx,\bsy \in [0,1)^s$, for which $\bsx \oplus \bsy$ is defined we have $$\walb_{\bsk}(\bsx) \walb_{\bsk}(\bsy) =\walb_{\bsk}(\bsx \oplus \bsy)\ \ \mbox{ for all } \ \bsk \in \NN_0^s.$$ In particular, $\walb_{\bsk}$ is a character of the finite abelian group $(\Dcal,\oplus)$. For $\bsk=(k_1,\ldots,k_s) \in \NN_0^s$ we have $\walb_{\bsk}(\bsx_n)=1$ for all $n=0,1,\ldots, b^m-1$ if and only if $$\sum_{j=1}^s \bfk_j \cdot \bfx_{n,j}=0 \mbox{ for all } n=0,1,\ldots ,b^m-1,$$ where $\bfk_j$ is the $m$-dimensional column vector of $b$-adic digits of $k_j$ and $\bfx_{n,j}$ denotes the $m$-dimensional column vector of $b$-adic digits of the $j$th component of $\bsx_n$. From the construction of the digital net we find that $\bfx_{n,j}=C_j \bfn$, where $\bfn$ denotes the column vector of $b$-adic digits of $n$, and hence $\walb_{\bsk}(\bsx_n)=1$ for all $n=0,1,\ldots, b^m-1$ if and only if $$\sum_{j=1}^s\bfk_j\cdot  C_j \bfn=0\ \ \mbox{ for all } \ n=0,1,\ldots, b^m-1.$$ This is satisfied if and only if $$ 
C_1^{\top} \bfk_1+\cdots +C_s^{\top} \bfk_s=\bszero.$$ Thus we have shown that $\walb_{\bsk}$ is a trivial character of $\Dcal$ if and only if $\bsk \in \Dcal^\bot$, where $$\Dcal^\bot= \{\bsk \in \{0,\ldots,b^m -1\}^s \,:\, C_1^\top \bfk_1 + \cdots
+ C_s^\top \bfk_s = \bszero\}.$$ The set $\Dcal^\bot$ is called the {\it dual net of the digital net $\Dcal$}. 

Now Lemma~\ref{lecharprop} yields the following important result:
\begin{lemma}\label{charprop}
Let $b$ be a prime number and let $\Dcal$ be a digital net over $\FF_b$. Then for $\bsk \in \{0,\ldots ,b^{m}-1\}^s$ we have 
 $$\sum_{n=0}^{b^m -1} \walb_{\bsk}(\bsx_n)=\left\{
\begin{array}{ll}
b^m & \mbox{ if } \bsk \in \Dcal^\bot,\\
0 & \mbox{ if } \bsk \not\in \Dcal^\bot.
\end{array}\right.$$
\end{lemma}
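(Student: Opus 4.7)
The plan is to reduce this identity to the general character property stated in Lemma~\ref{lecharprop} applied to the finite abelian group $(\Dcal,\oplus)$. All of the required pieces have essentially been assembled in the paragraphs preceding the lemma, so the proof will be a matter of bookkeeping rather than substantive computation.

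First I would invoke Lemma~\ref{lem3.42}, which guarantees that $(\Dcal,\oplus)$ is a finite abelian group; since $\Psi:\FF_b^m \to \Dcal$ is a group isomorphism, the order of this group is $b^m$. Next I would verify that, for each fixed $\bsk \in \{0,\ldots,b^m-1\}^s$, the map $\bsx \mapsto \walb_{\bsk}(\bsx)$ restricted to $\Dcal$ is a character of $(\Dcal,\oplus)$. This uses the Walsh multiplicativity identity $\walb_{\bsk}(\bsx)\walb_{\bsk}(\bsy)=\walb_{\bsk}(\bsx\oplus\bsy)$ recorded just before the lemma, together with the fact (stemming from $\varphi(0)=\overline{0}$) that $\bsx\oplus\bsy$ is well defined on $\Dcal$ so no ambiguity of $b$-adic expansions arises.

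Then I would identify exactly when this character is the trivial character of $\Dcal$. The computation carried out in the paragraph preceding the lemma shows that $\walb_{\bsk}(\bsx_n)=1$ for every $n=0,1,\ldots,b^m-1$ is equivalent to $C_1^\top \bfk_1+\cdots+C_s^\top \bfk_s = \bszero$, which by definition means $\bsk \in \Dcal^\bot$. With this equivalence in hand, Lemma~\ref{lecharprop} applied to the character $\walb_{\bsk}$ on the group $(\Dcal,\oplus)$ of order $b^m$ gives that $\sum_{n=0}^{b^m-1}\walb_{\bsk}(\bsx_n)$ equals $b^m$ if $\bsk\in\Dcal^\bot$ and $0$ otherwise, which is the claim.

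There is no genuine obstacle here; the only mild subtlety is keeping the three layers of identification straight — the bijection between integers $n\in\{0,\ldots,b^m-1\}$ and vectors $\bfn\in\FF_b^m$ of digits, the isomorphism $\Psi$ between $(\FF_b^m,+)$ and $(\Dcal,\oplus)$, and the translation of ``trivial character'' into the linear condition defining $\Dcal^\bot$. Once these identifications are made explicit, the lemma follows immediately from the general orthogonality of characters.
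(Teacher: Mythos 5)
Your proposal is correct and follows essentially the same route as the paper: the text preceding the lemma establishes that $\walb_{\bsk}$ restricted to $(\Dcal,\oplus)$ is a character which is trivial precisely when $C_1^\top\bfk_1+\cdots+C_s^\top\bfk_s=\bszero$, i.e.\ when $\bsk\in\Dcal^\bot$, and the lemma is then obtained by applying the character orthogonality of Lemma~\ref{lecharprop} to the group of order $b^m$ furnished by Lemma~\ref{lem3.42}. Your explicit attention to the three layers of identification (digits, the isomorphism $\Psi$, and the dual-net condition) is exactly the bookkeeping the paper leaves implicit.
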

This basic property is exploited in the analysis of the worst-case error of QMC rules based on digital nets or of discrepancy estimates (see, e.g., \cite{DP05,DP05b,DP10}). This can in turn be compared to the Fourier representation of the error of rank-1 lattice rules as in Section~\ref{OBEB}. For example Lemma~\ref{charprop} leads to a very concise formula for the discrepancy function of digital nets $\Dcal$ of the form $$D_N(\Dcal,\bsy)=\sum_{\bsk \in \Dcal^\bot\setminus \{\boldsymbol{0}\}} \widehat{1}_{[\boldsymbol{0},\boldsymbol{y})}(\bsk),$$ where the sum is over the dual net without zero and where  $\widehat{1}_{[\boldsymbol{0},\boldsymbol{y})}(\bsk)=\int_{[0,1]^s} 1_{[\boldsymbol{0},\boldsymbol{y})}(\bsx) \overline{\walb_{\bsk}(\bsx)}\, {\rm d} \bsx$ is the $\bsk$th Walsh-Fourier coefficient of the box-indicator function $1_{[\boldsymbol{0},\boldsymbol{y})}$. We refer to \cite[Lemma~14.8]{DP10} for a formula for $\widehat{1}_{[\boldsymbol{0},\boldsymbol{y})}(\bsk)$ in terms of Walsh series and \cite[Lemma~3.29]{DP10} for an estimate of $|\widehat{1}_{[\boldsymbol{0},\boldsymbol{y})}(\bsk)|$.

A generalization of Lemma~\ref{charprop} to the case of digital nets over $\FF_b$ with prime-power $b$ can be found in \cite[Lemma~2.5]{PDP}. In this case one requires the more general concept of Walsh functions over the finite field $\FF_b$.

\subsection{Minkowski's Fundamental Theorem}

Methods from the geometry of numbers play an important role in the analysis of lattice point sets. One of the most fundamental theorems in this area is due to Minkowski from 1896. 

\begin{theorem}[Minkowski]\label{thmink}
Let $L$ be a lattice in $\RR^s$. Then any convex set in
$\RR^s$ which is symmetric with respect to the origin and with
volume greater
than $2^s \det(L)$ contains a non-zero lattice point of $L$. 
\end{theorem}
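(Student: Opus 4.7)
The plan is to deduce Minkowski's theorem from an auxiliary measure--theoretic pigeonhole principle, usually attributed to Blichfeldt: if $S \subseteq \RR^s$ is measurable with $\mathrm{vol}(S) > \det(L)$, then there exist two distinct points $\bsx, \bsy \in S$ with $\bsx - \bsy \in L$. The overall proof proposal has two steps: first establish Blichfeldt's lemma, and then reduce Minkowski's theorem to it by a scaling-plus-symmetry-plus-convexity trick.

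For Blichfeldt's lemma, I would pick any fundamental domain $F$ of $L$ (e.g.\ the half-open parallelepiped spanned by a lattice basis), so that $\det(L) = \mathrm{vol}(F)$ and $\RR^s$ is the disjoint union $\bigsqcup_{\bsv \in L}(F + \bsv)$. Writing $S_{\bsv} := (S \cap (F + \bsv)) - \bsv \subseteq F$, one has $\mathrm{vol}(S) = \sum_{\bsv \in L}\mathrm{vol}(S_{\bsv})$. Since this sum exceeds $\mathrm{vol}(F)$, the sets $S_{\bsv}$ cannot be pairwise disjoint (measure-theoretically), so there exist $\bsv_1 \neq \bsv_2$ in $L$ and a point $\bsz \in S_{\bsv_1} \cap S_{\bsv_2}$. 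Then $\bsz + \bsv_1$ and $\bsz + \bsv_2$ both lie in $S$, and their difference $\bsv_1 - \bsv_2$ is a non-zero element of $L$.

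For Minkowski's theorem itself, let $K$ be the convex, origin-symmetric set with $\mathrm{vol}(K) > 2^s \det(L)$, and apply Blichfeldt to the scaled set $S := \tfrac{1}{2} K = \{\bsx/2 : \bsx \in K\}$, which satisfies $\mathrm{vol}(S) = 2^{-s}\mathrm{vol}(K) > \det(L)$. Blichfeldt's lemma produces distinct $\bsx_1, \bsx_2 \in \tfrac{1}{2}K$ with $\bsw := \bsx_1 - \bsx_2 \in L \setminus \{\boldsymbol{0}\}$. Since $2\bsx_1 \in K$ and, by symmetry of $K$, $-2\bsx_2 \in K$, convexity of $K$ gives
\begin{equation*}
\bsw = \bsx_1 - \bsx_2 = \tfrac{1}{2}(2\bsx_1) + \tfrac{1}{2}(-2\bsx_2) \in K,
\end{equation*}
so $\bsw$ is the desired non-zero lattice point of $L$ inside $K$.

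The main technical obstacle, such as it is, lies in the pigeonhole step of Blichfeldt: one must argue that strict inequality $\sum_{\bsv} \mathrm{vol}(S_{\bsv}) > \mathrm{vol}(F)$ forces a genuine (positive-measure) overlap of two of the $S_{\bsv}$, from which one then extracts an actual point $\bsz$ in the intersection. This is immediate if one is willing to work with representatives modulo null sets, but a fully careful write-up should mention measurability of $S$ (automatic for open or Borel sets, which covers the convex-body case) and the fact that the $S_{\bsv}$'s have only countably many indices with positive measure. The rest of the argument---the scaling by $\tfrac{1}{2}$ and the symmetry/convexity combination---is short and mechanical.
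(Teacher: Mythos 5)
Your proof is correct. The paper itself does not prove Theorem~\ref{thmink} (it only cites Cassels), and your two-step argument --- Blichfeldt's pigeonhole lemma via a fundamental domain, followed by the scaling of $K$ by $\tfrac{1}{2}$ and the symmetry-plus-convexity combination $\bsw = \tfrac{1}{2}(2\bsx_1) + \tfrac{1}{2}(-2\bsx_2)$ --- is exactly the classical proof found in that reference, with the measure-theoretic care points correctly identified.
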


See Cassels \cite{cass} for a proof and for more information regarding this theorem. In the following we give an application of Minkowski's result to the enhanced trigonometric degree of lattice rules. In Section~\ref{secDioph} we will apply Minkowski's theorem in the context of Diophantine approximation.

\subsubsection*{Example: The enhanced trigonometric degree of lattice rules} A cubature rule is said to have {\it trigonometric degree} $d$, if it integrates correctly all $s$-dimensional trigonometric polynomials of degree $d$. The {\it enhanced trigonometric degree} is the trigonometric degree increased by one. It is known (see \cite{Lyn2003}) that the enhanced trigonometric degree of a lattice rule generated by $\bsg \in \ZZ^s$ and consisting of $N$ nodes is $$\rho(\bsg,N)=\min_{\bsh \in L^{\bot}\setminus \{\bszero\}}|\bsh|,$$ where $|\bsh|$ is the one-norm of the vector $\bsh \in \ZZ^s$ and where $L^\bot$ is the corresponding dual lattice as defined in Section~\ref{grchardual}.

\begin{theorem}
For all $\bsg \in \ZZ^s$ and integers $N\ge 2$ we have $\rho(\bsg,N)\le (s!N)^{1/s}$. 
\end{theorem}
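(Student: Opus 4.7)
The strategy is to apply Minkowski's fundamental theorem (Theorem \ref{thmink}) to the dual lattice $L^\bot$ together with the symmetric convex body given by an $\ell_1$-ball, since the enhanced trigonometric degree is by hypothesis the $\ell_1$-length of the shortest nonzero vector of $L^\bot$.

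First I would pin down the covolume of the dual lattice. Using formula \eqref{def_duallatt_rank1}, the dual lattice $L^\bot=\{\bsh\in\ZZ^s:\bsh\cdot\bsg\equiv0\pmod N\}$ is the kernel of the group homomorphism $\ZZ^s\to\ZZ/N\ZZ$, $\bsh\mapsto\bsh\cdot\bsg\pmod N$. Hence $L^\bot$ is a sublattice of $\ZZ^s$ of index at most $N$, which implies $\det(L^\bot)\le N$.

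Next I would compute the volume of the closed $\ell_1$-ball $B_r=\{\bsx\in\RR^s:|\bsx|\le r\}$, which is a symmetric convex body (a cross-polytope). A standard computation (induction on $s$, slicing by a fixed coordinate) gives $\mathrm{vol}(B_r)=(2r)^s/s!$. Fix any $\varepsilon>0$ and set $r=(s!\,N)^{1/s}+\varepsilon$. Then
\begin{equation*}
\mathrm{vol}(B_r)=\frac{(2r)^s}{s!}>\frac{2^s\,s!\,N}{s!}=2^s N\ge 2^s\det(L^\bot),
\end{equation*}
so Minkowski's theorem guarantees a nonzero $\bsh\in L^\bot$ with $|\bsh|\le r=(s!\,N)^{1/s}+\varepsilon$. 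Since $L^\bot\subseteq\ZZ^s$ is discrete, the set of values $\{|\bsh|:\bsh\in L^\bot\setminus\{\bszero\}\}$ has a minimum, and letting $\varepsilon\downarrow 0$ yields $\rho(\bsg,N)=\min_{\bsh\in L^\bot\setminus\{\bszero\}}|\bsh|\le(s!\,N)^{1/s}$.

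The only mild obstacle is the volume calculation for the cross-polytope and the bookkeeping for strict versus non-strict inequality in Minkowski's theorem, both of which are handled by the $\varepsilon$ device above. Everything else is a direct application of the tools already stated in the excerpt.
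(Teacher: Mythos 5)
Your proof is correct and follows essentially the same route as the paper: apply Minkowski's fundamental theorem to the dual lattice $L^\bot$ with the cross-polytope $\{\bsx:|x_1|+\cdots+|x_s|\le\rho\}$ of volume $(2\rho)^s/s!$. The only (minor, and welcome) differences are that you derive $\det(L^\bot)\le N$ directly from the index of the kernel of $\bsh\mapsto\bsh\cdot\bsg\pmod N$ rather than citing $\det(L^\bot)=N$, and you handle the strict-inequality hypothesis in Minkowski's theorem with an explicit $\varepsilon$-limiting argument that the paper glosses over.
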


\begin{proof}
Let $L$ be an integration lattice generated by $\bsg \in \ZZ^s$ yielding
an $N$-point lattice rule and let $L^{\bot}$ be the dual lattice.
According to \cite[Theorem~5.30]{niesiam} we have $\det(L^{\bot})=N$.

Now consider the convex region
$$C_{\rho}^s=\{\bsx \in \RR^s \, : \,
|x_1|+\cdots+|x_s| \le \rho\},$$ where
$\rho>0$. Then $C_{\rho}^s$ is symmetric
with respect to the origin and the volume of
$C_{\rho}^s$ is
$${\rm Vol}(C_{\rho}^s)=\frac{2^s \rho^s}{s!}.$$ Hence, by
Minkowski's theorem applied to $L^\bot$, we have that if
$$\frac{2^s \rho^s}{s!} \ge 2^s \det(L^\bot) =2^s N,$$ i.e., if
$\rho \ge (s! N)^{1/s}$, then $C_{\rho}^s$ contains a non-zero
point from $L^\bot$. In other words, $L^\bot$ contains a non-zero lattice point which
belongs to $C_{(s! N)^{1/s}}^s$ and therefore we have
$\rho(\bsg,N) \le (s! N)^{1/s}$. 
\end{proof}

\subsection{Exponential Sums}\label{secExpSum}

{\it Exponential sums} are objects of the form $$S(X,F)=\sum_{x \in X} \exp(2 \pi \icomp F(x))$$ where $X$ is an arbitrary  finite set and $F$ is a real valued function on $X$. They lie at the interface of number theory and harmonic analysis and have important applications in many branches of mathematics. For example, the famous Weyl criterion (see, e.g., \cite{DP10, DT,kuinie}) states that a sequence $\cS=(\bsx_n)_{n \ge 0}$ of points in $[0,1)^s$ is uniformly distributed modulo one if and only if for all $\bsh \in \ZZ^s \setminus \{\bszero\}$ and $F_{\bsh}(\bsx)= \bsh\cdot \bsx$ we have  $$S(\cP_N,F_{\bsh}) =o(N) \ \ \mbox{ for }\ N \rightarrow \infty,$$ where $\cP_N$ is the point set consisting of the first $N$ terms of $\cS$. A quantitative version of this result is the inequality of Erd\H{o}s-Tur\'{a}n-Koksma.

\begin{theorem}[Erd\H{o}s-Tur\'{a}n-Koksma]\label{thETH}
For the discrepancy of every $N$-element point set $\cP_N$ in $[0,1)^s$ we have $$D_N(\cP_N) \ll_s \frac{1}{m}+\sum_{0< |\bsh|_{\infty} \le m}\frac{1}{r(\bsh)} \frac{|S(\cP_N,F_{\bsh})|}{N},$$ where $m \in \NN$ and where $r(\bsh)=\prod_{j=1}^s\max(1,|h_j|)$ and $|\bsh|_{\infty}=\max_{j=1,\ldots,s} |h_j|$ for $\bsh=(h_1,\ldots,h_s)\in \ZZ^s$.
\end{theorem}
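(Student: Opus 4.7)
The plan is to approximate the box indicator $\mathbf{1}_{B_{\bsy}}$ by a trigonometric polynomial of coordinatewise degree at most $m$, convert the resulting approximation of the discrepancy function into a linear combination of the exponential sums $S(\cP_N,F_{\bsh})$ with $0<|\bsh|_\infty\le m$, and then take the supremum over $\bsy\in[0,1)^s$.

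The one-dimensional building block I would use is the classical Vaaler/Selberg extremal construction (built from the Beurling function): for every $\alpha\in[0,1)$ and $m\in\NN$ there exist real trigonometric polynomials $\phi_{m,\alpha}^{\pm}(x)=\sum_{|h|\le m}c_h^{\pm}(\alpha)\exp(2\pi\icomp h x)$ of degree $\le m$ satisfying
\[
\phi_{m,\alpha}^{-}\le \mathbf{1}_{[0,\alpha)}\le \phi_{m,\alpha}^{+},\qquad \int_0^1(\phi_{m,\alpha}^{+}-\phi_{m,\alpha}^{-})\,\mathrm{d} x\le \frac{C}{m+1},
\]
with $|c_h^{\pm}(\alpha)|\le C/\max(1,|h|)$ and $\|\phi_{m,\alpha}^{\pm}\|_\infty$ bounded by an absolute constant. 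This standard result I would cite rather than rederive.

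To pass to $s$ dimensions, fix $\bsy$, write $\phi_j^{\pm}:=\phi_{m,y_j}^{\pm}$, $\mathbf{1}_j:=\mathbf{1}_{[0,y_j)}$, and use the telescoping identity
\[
\prod_{j=1}^{s}\mathbf{1}_j(x_j)-\prod_{j=1}^{s}\phi_j^{+}(x_j)=\sum_{k=1}^{s}\Big(\prod_{j<k}\mathbf{1}_j(x_j)\Big)(\mathbf{1}_k-\phi_k^{+})(x_k)\Big(\prod_{j>k}\phi_j^{+}(x_j)\Big)
\]
together with its lower-bound analogue. Averaging both identities over $\cP_N$, taking absolute values, and using $|\mathbf{1}_k-\phi_k^{\pm}|\le\phi_k^{+}-\phi_k^{-}$ together with the uniform $L^{\infty}$ bound on $\phi_j^{\pm}$, the difference $\frac{1}{N}\sum_n\prod_j\mathbf{1}_j(x_{n,j})-\frac{1}{N}\sum_n\prod_j\phi_j^{\pm}(x_{n,j})$ is controlled by $s$ one-dimensional averages $\frac{1}{N}\sum_n(\phi_k^{+}-\phi_k^{-})(x_{n,k})$. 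Bounding each such average by its integral (which is $O(1/(m+1))$) plus a lower-dimensional discrepancy contribution (handled by an easy induction on $s$) yields a total error of order $O_s(1/m)$. The remaining main term expands as
\[
\frac{1}{N}\sum_{n=0}^{N-1}\prod_{j=1}^{s}\phi_j^{\pm}(x_{n,j})-\prod_{j=1}^{s}\int_0^1\phi_j^{\pm}\,\mathrm{d} x=\sum_{\substack{\bsh\in\ZZ^s\\0<|\bsh|_\infty\le m}}\Big(\prod_{j=1}^{s}c_{h_j}^{\pm}(y_j)\Big)\frac{S(\cP_N,F_{\bsh})}{N},
\]
with the coefficient product bounded in absolute value by $C^s/r(\bsh)$. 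Replacing $\prod_j\int_0^1\phi_j^{\pm}$ by $\prod_j y_j=\mathrm{vol}(B_{\bsy})$ costs another $O_s(1/m)$, and taking the supremum over $\bsy$ then delivers Theorem \ref{thETH}.

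The principal technical obstacle is the one-dimensional extremal construction, which must simultaneously furnish the one-sided sandwich, mean close to $\alpha$, Fourier decay of order $1/|h|$, and bounded sup-norm. A secondary nuisance in passing to $s$ dimensions is that the cross terms in the telescoping contain products $\prod_{j>k}\phi_j^{+}$ that need not be pointwise nonnegative; this is dealt with via the uniform $L^{\infty}$ bound, which folds into the $s$-dependent constant absorbed by $\ll_s$, rather than by any positivity argument.
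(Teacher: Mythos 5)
The paper does not actually prove Theorem~\ref{thETH}; it only points to \cite{DT} (and to \cite{kuinie} for $s=1$), so there is no in-paper argument to compare against. What you have written is, in substance, the standard proof that lives in those references: sandwich each $\mathbf{1}_{[0,y_j)}$ between Selberg--Vaaler extremal trigonometric polynomials of degree $\le m$, telescope the product over coordinates, and read off the main term as a linear combination of the exponential sums $S(\cP_N,F_{\bsh})$, $0<|\bsh|_\infty\le m$, with coefficients of size $O(C^s/r(\bsh))$. All the properties you invoke of the extremal functions are correct and compatible: $\phi^{+}-\phi^{-}$ is a nonnegative trigonometric polynomial of degree $\le m$ with integral $2/(m+1)$, hence with Fourier coefficients bounded by $2/(m+1)$ and sup-norm bounded by $2$; consequently $|c_h^{\pm}(\alpha)|\le 1/(\pi|h|)+1/(m+1)\ll 1/\max(1,|h|)$ for $0<|h|\le m$, and $\|\phi^{\pm}\|_\infty\le 3$. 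The one-sidedness makes $|\mathbf{1}_k-\phi_k^{\pm}|\le\phi_k^{+}-\phi_k^{-}$ legitimate inside the telescoping sum, and your remark that the cross factors $\prod_{j>k}\phi_j^{+}$ are handled by the sup-norm bound rather than positivity is exactly right. I consider the argument correct and essentially gap-free.

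One streamlining is worth recording: the ``lower-dimensional discrepancy contribution handled by induction on $s$'' is unnecessary. The average $\frac{1}{N}\sum_{n}(\phi_k^{+}-\phi_k^{-})(x_{n,k})$ equals its integral, which is $2/(m+1)$, plus $\sum_{0<|h|\le m}\bigl(c_h^{+}-c_h^{-}\bigr)\frac{1}{N}S(\cP_N,F_{h\boldsymbol{e}_k})$, where $\boldsymbol{e}_k$ is the $k$th unit vector; since $|c_h^{+}-c_h^{-}|\le 2/(m+1)\le 2/|h|=2/r(h\boldsymbol{e}_k)$ for $0<|h|\le m$, these terms are already of the form appearing on the right-hand side of the theorem (vectors $\bsh$ supported on a single coordinate satisfy $0<|\bsh|_\infty\le m$ and $r(\bsh)=|h|$ because $r$ uses $\max(1,|h_j|)$). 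So the entire error from the telescoping folds directly into the stated bound, with the multiplicity $2s$ and the factor $C^s$ absorbed into $\ll_s$; no induction and no auxiliary notion of lower-dimensional discrepancy is needed.
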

A proof of this theorem can be found in \cite{DT} (and also in \cite{kuinie}, but there only for the one-dimensional case).

We present two examples which are based on the Erd\H{o}s-Tur\'{a}n-Koksma inequality and which illustrate the power of exponential sums for estimating discrepancy. More information on exponential sums can be found in \cite{LidNie,Shp,Wint}. 

\subsubsection*{Example: The star discrepancy of lattice point sets}

Combining Lemma~\ref{explatchar} and Theorem~\ref{thETH} with $m=N$ we find that the discrepancy of a rank-1 lattice point set $\cP(\bsg,N)$ (cf. Section~\ref{secLPS}) satisfies 
\begin{equation}\label{ETKlps}
D_N(\cP(\bsg,N)) \ll_s \frac{1}{N}+ R(\bsg,N),
\end{equation}
where $$R(\bsg,N):=\sum_{0< |\bsh|_{\infty} \le N \atop \bsh \in L^{\bot}}\frac{1}{r(\bsh)}.$$ For simplicity let $N$ be a prime number. We average $R(\bsg,N)$ over all $\bsg \in G_N^s$, where $G_N:=\{1,\ldots,N-1\}$, and obtain
\begin{align*}
\frac{1}{(N-1)^s} \sum_{\bsg \in G_N^s} R(\bsg,N) = & \frac{1}{(N-1)^s} \sum_{0 < |\bsh|_{\infty} \le m} \frac{1}{r(\bsh)} \sum_{\bsg \in L^{\bot} \cap G_N^s} 1.  
\end{align*}
Now $\bsg \in L^{\bot} \cap G_N^s$ means in particular that $g_1 h_1+\cdots +g_s h_s \equiv 0 \pmod{N}$. If at least one of the $h_i$'s is different from zero, then there are at most $(N-1)^{s-1}$ elements $(g_1,\ldots,g_s) \in G_N^s$ which satisfy this condition. Hence we find that
\begin{align}\label{bdavR}
\frac{1}{(N-1)^s} \sum_{\bsg \in G_N^s} R(\bsg,N) \le & \frac{1}{N-1} \sum_{0 < |\bsh|_{\infty} \le m} \frac{1}{r(\bsh)}\nonumber \\
= & \frac{1}{N-1} \left(-1+\left(\sum_{h=-N}^N \frac{1}{\max(1,|h|)}\right)^s\right)\ll  \frac{(\log N)^s}{N}.
\end{align}
Combining \eqref{ETKlps} and \eqref{bdavR} we obtain the following result.
\begin{theorem}
For every prime number $N$ there exists a lattice point $\bsg \in G_N^s$ such that $$D_N(\cP(\bsg,N)) \ll_s \frac{(\log N)^s}{N}.$$ 
\end{theorem}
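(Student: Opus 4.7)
The plan is to assemble the theorem directly from the two displayed estimates immediately preceding its statement, namely \eqref{ETKlps} and \eqref{bdavR}, and then close with a standard averaging argument. The underlying strategy is the probabilistic method: instead of exhibiting a specific good generator $\bsg$, I bound the average of the discrepancy (or rather of the quantity $R(\bsg,N)$ controlling it) over all $\bsg \in G_N^s$, and then infer the existence of at least one $\bsg_\ast$ for which $R(\bsg_\ast,N)$ does not exceed this average.

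First I would recall that by the Erd\H{o}s--Tur\'an--Koksma inequality (Theorem~\ref{thETH}) applied with $m=N$, combined with the character identity for lattice rules (Lemma~\ref{explatchar}) which collapses the exponential sums $S(\cP(\bsg,N),F_{\bsh})$ to either $N$ (if $\bsh\in L^\bot$) or $0$, we obtain the bound \eqref{ETKlps}:
\begin{equation*}
D_N(\cP(\bsg,N)) \ll_s \frac{1}{N} + R(\bsg,N),\qquad R(\bsg,N)=\sum_{\substack{0<|\bsh|_\infty\le N\\ \bsh\in L^\bot}} \frac{1}{r(\bsh)}.
\end{equation*}

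Next I would execute the averaging. Interchanging the order of summation,
\begin{equation*}
\frac{1}{(N-1)^s}\sum_{\bsg\in G_N^s} R(\bsg,N) = \sum_{0<|\bsh|_\infty\le N}\frac{1}{r(\bsh)}\cdot\frac{\#\{\bsg\in G_N^s:\bsh\cdot\bsg\equiv 0\pmod N\}}{(N-1)^s}.
\end{equation*}
Here the primality of $N$ is the key input: for any nonzero $\bsh\in\ZZ^s$ with $|\bsh|_\infty\le N$, at least one component $h_j$ is not divisible by $N$ (since $|h_j|\le N$ and not all $h_j$ vanish), so the congruence $\bsh\cdot\bsg\equiv 0\pmod N$ pins $g_j$ down uniquely modulo $N$ once the remaining coordinates are fixed, giving at most $(N-1)^{s-1}$ solutions in $G_N^s$. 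This is exactly the step where $N$ being prime is used. Substituting this count and estimating the resulting one-dimensional sum yields \eqref{bdavR}, which is $\ll (\log N)^s/N$.

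Finally, since the average of $R(\bsg,N)$ over $G_N^s$ is $\ll_s (\log N)^s/N$, there must exist at least one $\bsg_\ast\in G_N^s$ with $R(\bsg_\ast,N)\ll_s (\log N)^s/N$. Inserting this into \eqref{ETKlps} and absorbing the $1/N$ term yields the claimed discrepancy bound. There is no genuine obstacle here since all ingredients are in place; the only subtle point worth stating explicitly is the primality-based counting of solutions of $\bsh\cdot\bsg\equiv 0\pmod N$, which is what forces the hypothesis that $N$ is prime (for composite $N$ one has to restrict to $\bsg$ with $\gcd(g_j,N)=1$ and argue slightly more carefully).
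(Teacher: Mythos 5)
Your proof follows the paper's argument exactly: the Erd\H{o}s--Tur\'an--Koksma inequality with $m=N$ combined with the character property gives \eqref{ETKlps}, the primality-based count of at most $(N-1)^{s-1}$ solutions of $\bsh\cdot\bsg\equiv 0 \pmod{N}$ gives \eqref{bdavR}, and the averaging principle finishes. The only (shared with the paper, and harmless) imprecision is that a component $h_j=\pm N$ \emph{is} divisible by $N$, so vectors $\bsh\neq\bszero$ with all entries in $\{-N,0,N\}$ actually admit $(N-1)^s$ solutions; since $r(\bsh)\ge N$ for such $\bsh$ and there are at most $3^s$ of them, their total contribution to the average is $O_s(1/N)$ and the stated bound is unaffected.
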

For a more general and accurate result we refer to the book by Niederreiter~\cite[Chapter~5]{niesiam}. The currently best result for the discrepancy of rank-1 lattice point sets was proved by Larcher~\cite{lar86} for dimension $s=2$ and by Bykovskii~\cite{byko} for arbitrary dimension $s$.
\begin{theorem}[Bykovskii, Larcher]
For every integer $N \ge 3$, there exists a lattice point $\bsg \in \ZZ^s$ such that  
\begin{equation*}
D_N(\cP(\bsg,N)) \ll_s \frac{(\log N)^{s-1}\log \log N}{N}.
\end{equation*}
\end{theorem}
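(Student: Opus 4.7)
The starting point is the Erd\H{o}s-Tur\'an-Koksma inequality (Theorem~\ref{thETH}) applied with $m=N-1$, combined with the character property in Lemma~\ref{explatchar}, which reduces the estimation of $D_N(\cP(\bsg,N))$ to controlling the sum $R(\bsg,N)=\sum_{\bsh\in L^\bot\setminus\{\bszero\},\,|\bsh|_\infty\le N}1/r(\bsh)$. The naive symmetric averaging over $\bsg\in G_N^s$ loses a factor of $\log N$ relative to the desired bound because each coordinate contributes $\log N$ independently; the point of Bykovskii--Larcher is to exploit the fact that the dual lattice constraint is a single linear equation, so that one coordinate of $\bsh$ is determined by the remaining $s-1$, and hence only $(\log N)^{s-1}$ ``free'' logs can appear.

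The plan is the following. By the scaling invariance $\cP(c\bsg,N)=\cP(\bsg,N)$ for $\gcd(c,N)=1$, assume $N$ prime and $g_1=1$, so that the dual lattice condition becomes $h_1\equiv -(h_2g_2+\cdots+h_sg_s)\pmod N$; this exhibits $h_1$ as a determined function of $\bsh'=(h_2,\ldots,h_s)$ and $\bsg'=(g_2,\ldots,g_s)\in G_N^{s-1}$. Splitting off the contribution from $h_1=0$, which is an $(s-1)$-dimensional instance already enjoying the better bound by induction, the remaining sum becomes
\[
\sum_{\bszero\neq\bsh',\,|h_j|\le N}\frac{1}{r(\bsh')}\cdot\frac{1}{\max(1,|h_1(\bsg',\bsh')|)},
\]
where $h_1(\bsg',\bsh')$ denotes the representative of $-\bsh'\cdot\bsg'\pmod N$ in $(-N/2,N/2]$. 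The goal is to choose $\bsg'$ so that $|h_1(\bsg',\bsh')|$ is comparable to its ``expected size'' $N/(\text{number of admissible }\bsh')$ for all relevant $\bsh'$ simultaneously.

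The key step is a dyadic decomposition: partition the $\bsh'$ with $|h_j|\le N$ into blocks $B_k=\{\bsh':2^{k-1}\le r(\bsh')<2^k\}$ for $1\le k\ll s\log N$. A standard count gives $|B_k|\ll 2^k k^{s-2}$. For a single dyadic block, the probabilistic/averaging method applied to $\bsg'$ uniformly distributed on $G_N^{s-1}$ shows that for any prescribed threshold $T_k$, the number of $\bsh'\in B_k$ with $|h_1(\bsg',\bsh')|<T_k$ is on average $O(|B_k|T_k/N)$, and hence the contribution of $B_k$ to $R(\bsg,N)$ is on average $O((k^{s-2}/N)\log(N/T_k))$ plus a bad-event term controlling small $|h_1|$. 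Summing over the $O(\log N)$ scales and choosing $T_k$ uniformly yields the factor $(\log N)^{s-1}$ from the $s-2$ free logs per block times $\log N$ scales, plus a $\log\log N$ coming from a careful balancing of the tail contributions over dyadic levels.

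The main obstacle is precisely this simultaneous control: for a single scale the probabilistic method easily produces a good $\bsg'$, but we need a single vector that is good on every dyadic scale at once. A straightforward union bound over the $\Theta(\log N)$ scales would reintroduce a spurious $\log N$; the improvement to $\log\log N$ relies on the fact that tail probabilities for the number of exceptional $\bsh'$ in $B_k$ decay polynomially in the deviation, so that a moment method (or a carefully weighted average in the spirit of the Bykovskii averaging lemma) allows one to extract a single $\bsg'$ witnessing the bound on all scales up to a loss of $\log\log N$. The two-dimensional case (Larcher) admits a cleaner route via the continued fraction expansion of $g_2/N$ and the three-distance theorem, which simultaneously controls all dual vectors, but in dimension $s\ge 3$ no such one-shot device is available and the dyadic/averaging machinery above is essential.
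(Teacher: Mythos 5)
First, a point of reference: the paper does not prove this theorem at all. It is quoted with citations to Larcher ($s=2$) and Bykovskii (general $s$), so your sketch has to stand entirely on its own, and it does not. The decisive step --- producing a single $\bsg$ that is good on all $\Theta(\log N)$ dyadic scales simultaneously while paying only a factor $\log\log N$ --- is exactly the content of the theorem, and your proposal merely asserts that ``a moment method'' delivers it. The mechanism you invoke cannot: if the tail probabilities for the exceptional count in a block decay only polynomially in the deviation, then a union bound over $\Theta(\log N)$ blocks forces a loss that is a power of $\log N$, not $\log\log N$. Worse, your accounting does not add up even heuristically, because the route through the Erd\H{o}s--Tur\'an--Koksma inequality and $R(\bsg,N)$ is already lossy by a full factor of $\log N$. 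Since $\EE_{\bsg'}\bigl[1/\max(1,|h_1(\bsg',\bsh')|)\bigr]\asymp(\log N)/N$, the expected contribution of $B_k$ is $\asymp k^{s-2}(\log N)/N$; and even in the ideal scenario in which the residues $h_1(\bsg',\bsh')$, $\bsh'\in B_k$, are perfectly spread with spacing $N/|B_k|$, the block contributes $\asymp 2^{-k}\sum_{j\le|B_k|}|B_k|/(jN)\asymp k^{s-1}/N$. Summing either expression over $k\le(s-1)\log_2N$ returns $(\log N)^s/N$, not $(\log N)^{s-1}\log\log N/N$. (This matches the fact that the first moment bound \eqref{bdavR} is tight, and that even for $s=2$ and the best possible $g$ one has $R(g,N)\asymp(\log N)^2/N$ while $D_N\asymp(\log N)/N$.) So the approach fails at the level of strategy, not just of detail: no choice of $\bsg$ can make $R(\bsg,N)$ as small as the right-hand side of the theorem is required to be. Bykovskii's actual argument replaces the $R(\bsg,N)$ bound by a sharper estimate of the discrepancy in terms of the relative minima of the dual lattice, and Larcher's $s=2$ proof likewise works with the discrepancy directly rather than with $R$.

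Two further gaps. The reduction to $N$ prime is not available for the statement ``for every integer $N\ge3$''; the invariance $\cP(c\bsg,N)=\cP(\bsg,N)$ for $\gcd(c,N)=1$ lets you normalize $g_1=1$ only when $N$ is prime, and says nothing about composite $N$. And disposing of the $h_1=0$ terms ``by induction'' hides another simultaneity problem: the same $\bsg'$ must at once be a good $(s-1)$-dimensional generator and satisfy all of the block conditions for $h_1\neq0$, which your averaging does not arrange.
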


\subsubsection*{Example: Gauss sums and linear congruential pseudorandom numbers}

Discrepancy is a measure for the deviation of the distribution of a given point set from perfect uniform distribution.  Hence it is also an important test criterion for pseudorandom numbers which are required for Monte Carlo integration. The following example is taken from \cite{Wint}.

Let $b$ be a prime number. Let $\chi$ be a multiplicative character and $\psi$ be an additive character of $\FF_b$. Then $$G(\chi,\psi)=\sum_{c \in \FF_b^\times} \chi(c) \psi(c)$$ is called a {\it Gauss sum of type I}. Here and in the following  $\FF_b^\times$ denotes the multiplicative group of $\FF_b$.

Let denote by $\psi_0$ and $\chi_0$ the trivial additive and multiplicative 
character of $\FF_b$, respectively, that is $\psi_0(x)=1$ for all $x\in \FF_b$, and 
$\chi_0(x)=1$ for all $x\in \FF_b^\times$.

\begin{lemma}\label{leGI}
We have  $$G(\chi,\psi)=\left\{
\begin{array}{rl}
-1 & \mbox{ if } \chi=\chi_0 \mbox{ and } \psi\not=\psi_0,\\
0 & \mbox{ if } \chi\not=\chi_0 \mbox{ and } \psi=\psi_0,\\
q-1 & \mbox{ if } \chi=\chi_0 \mbox{ and } \psi=\psi_0. 
\end{array}\right.$$
If $\chi$ and $\psi$ are both nontrivial, then $|G(\chi,\psi)| =\sqrt{b}$.
\end{lemma}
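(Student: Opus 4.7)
The plan is to dispose of the three degenerate cases by a direct appeal to Lemma~\ref{lecharprop} and then to establish the modulus identity $|G(\chi,\psi)|=\sqrt{b}$ by computing $|G(\chi,\psi)|^2$ and reducing it, via a multiplicative change of variable, to a single character sum that is handled by the same lemma.

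For the first three cases I would argue as follows. When $\chi=\chi_0$ and $\psi\neq\psi_0$, write
\[
G(\chi_0,\psi)=\sum_{c\in\FF_b^\times}\psi(c)=\Big(\sum_{c\in\FF_b}\psi(c)\Big)-\psi(0)=0-1=-1,
\]
where the vanishing of the full sum is Lemma~\ref{lecharprop} applied to the additive group $(\FF_b,+)$. When $\chi\neq\chi_0$ and $\psi=\psi_0$, the sum $G(\chi,\psi_0)=\sum_{c\in\FF_b^\times}\chi(c)$ vanishes by the same lemma applied to the multiplicative group $\FF_b^\times$. The case $\chi=\chi_0$, $\psi=\psi_0$ is just $|\FF_b^\times|=b-1$.

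The main step is to show $|G(\chi,\psi)|^2=b$ when both characters are nontrivial. I would expand
\[
|G(\chi,\psi)|^2=\sum_{c,d\in\FF_b^\times}\chi(c)\overline{\chi(d)}\psi(c)\overline{\psi(d)}
=\sum_{c,d\in\FF_b^\times}\chi(cd^{-1})\psi(c-d),
\]
using that $\overline{\chi(d)}=\chi(d^{-1})$ and $\overline{\psi(d)}=\psi(-d)$. Substituting $c=dt$ with $t\in\FF_b^\times$ decouples the variables:
\[
|G(\chi,\psi)|^2=\sum_{t\in\FF_b^\times}\chi(t)\sum_{d\in\FF_b^\times}\psi(d(t-1)).
\]
The inner sum equals $b-1$ when $t=1$, and for $t\neq1$ the map $d\mapsto d(t-1)$ is a bijection of $\FF_b^\times$, so the inner sum becomes $\sum_{e\in\FF_b^\times}\psi(e)=-1$ by the first part. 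Hence
\[
|G(\chi,\psi)|^2=(b-1)\chi(1)-\sum_{t\in\FF_b^\times,\,t\neq1}\chi(t)=(b-1)-\big(0-\chi(1)\big)=b,
\]
where I used nontriviality of $\chi$ to make $\sum_{t\in\FF_b^\times}\chi(t)=0$.

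The only genuinely delicate point is the bookkeeping in the change of variables and the careful separation of the $t=1$ term from the rest; everything else is a mechanical application of Lemma~\ref{lecharprop}. I do not expect any real obstacle, provided one is careful to use nontriviality of $\psi$ to evaluate the additive sum and nontriviality of $\chi$ to evaluate the multiplicative sum in the final step.
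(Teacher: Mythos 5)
Your proof is correct and follows essentially the same route as the paper: the paper also computes $|G(\chi,\psi)|^2=\sum_{c,d}\chi(cd^{-1})\psi(c-d)$, decouples the variables by the substitution $e=cd^{-1}$, evaluates the inner additive sum as $b-1$ or $-1$, and finishes with the vanishing of the nontrivial multiplicative character sum. The only difference is that you also write out the three degenerate cases, which the paper dismisses as routine consequences of Lemma~\ref{lecharprop}.
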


\begin{proof}
We only show the case where $\chi$ and $\psi$ are both nontrivial. Then we have
\begin{align*}
|G(\chi,\psi)|^2 = G(\chi,\psi) \overline{G(\chi,\psi)} = \sum_{c,d\in \FF_b^\times} \chi(c d^{-1}) \psi(c-d) = \sum_{e \in \FF_b^\times} \chi(e) \sum_{c \in \FF_b^\times} \psi(c(1-e^{-1})).
\end{align*}
From Lemma~\ref{lecharprop} we obtain $$\sum_{c \in \FF_b^\times} \psi(c(1-e^{-1}))=\left\{
\begin{array}{rl}
b-1 & \mbox{ if } e=1,\\
-1 & \mbox{ if } e \not=1, 
\end{array}\right.$$
and hence $$|G(\chi,\psi)|^2=  b-1-\sum_{e \in \FF_b^\times\setminus\{1\}} \chi(e)=b,$$ again according to Lemma~\ref{lecharprop}. Hence $|G(\chi,\psi)| =\sqrt{b}$.
\end{proof}
(We remark that Lemma~\ref{leGI} holds even if $b$ is a prime-power.) More information on Gauss sums of type I can be found in \cite[Chapter~5, Section~2]{LidNie}.

For $n \in \NN$ a sum of the form $$S_n(\psi) =\sum_{c \in \FF_b^{\times}} \psi(c^n)$$ is called a {\it Gauss sum of type II}. Since $S_n=S_{\gcd(n,b-1)}$ we may restrict ourselves to divisors $n$ of $b-1$.

\begin{lemma}\label{leGII}
Let $n|(b-1)$. If $\psi\not=\psi_0$ then we have  $|S_n(\psi)| \le (n-1) \sqrt{b} +1.$
\end{lemma}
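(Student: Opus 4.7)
The plan is to rewrite $S_n(\psi)$ as a sum of Gauss sums of type I and then apply Lemma~\ref{leGI} together with the triangle inequality. The key algebraic fact is that because $n \mid (b-1)$, the $n$-th power map on $\FF_b^\times$ is $n$-to-$1$ onto the subgroup of $n$-th powers, and there are exactly $n$ multiplicative characters $\chi$ of $\FF_b$ satisfying $\chi^n = \chi_0$ (the characters of the quotient group $\FF_b^\times/(\FF_b^\times)^n$). By a standard orthogonality relation, for any $y \in \FF_b^\times$ the number of solutions $c$ to $c^n = y$ equals $\sum_{\chi^n = \chi_0} \chi(y)$.

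Using this, I would first re-parametrise:
\begin{equation*}
S_n(\psi) = \sum_{c \in \FF_b^\times} \psi(c^n) = \sum_{y \in \FF_b^\times} \#\{c \in \FF_b^\times : c^n = y\}\, \psi(y) = \sum_{\chi : \chi^n = \chi_0} \sum_{y \in \FF_b^\times} \chi(y) \psi(y) = \sum_{\chi : \chi^n = \chi_0} G(\chi, \psi).
\end{equation*}

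Next I would split off the trivial multiplicative character $\chi_0$ from the other $n-1$ characters with $\chi^n = \chi_0$. Since $\psi \neq \psi_0$, Lemma~\ref{leGI} gives $G(\chi_0, \psi) = -1$, while for each of the remaining $n-1$ nontrivial characters $\chi$ the same lemma yields $|G(\chi, \psi)| = \sqrt{b}$. The triangle inequality then produces
\begin{equation*}
|S_n(\psi)| \le |G(\chi_0, \psi)| + \sum_{\chi \neq \chi_0, \chi^n = \chi_0} |G(\chi, \psi)| \le 1 + (n-1)\sqrt{b},
\end{equation*}
which is the claimed bound.

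The only nonroutine step is justifying the orthogonality identity counting $n$-th roots, which is essentially the dual of Lemma~\ref{lecharprop} applied to the cyclic group $\FF_b^\times$ and its subgroup of $n$-th powers; this is the main obstacle but it is classical. Everything else is bookkeeping plus the already-established evaluations of type-I Gauss sums.
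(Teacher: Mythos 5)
Your proposal is correct and follows essentially the same route as the paper: the paper fixes a multiplicative character $\chi$ of order $n$ and writes $S_n(\psi)=\sum_{j=0}^{n-1}G(\chi^j,\psi)$ via the orthogonality relation $\sum_{j=0}^{n-1}\chi^j(x)=n$ or $0$ according to whether $x$ is an $n$-th power, which is exactly your sum over the $n$ characters with $\chi^n=\chi_0$. The final estimate, splitting off $G(\chi_0,\psi)=-1$ and bounding the remaining $n-1$ type-I Gauss sums by $\sqrt{b}$ each, is identical.
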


\begin{proof}
Let $\chi$ be a multiplicative character of $\FF_b$ of order $n$, i.e., $n$ is the least positive integer such that $\chi^n(x)=1$ for all $x \in \FF_b^\times$. Then for $x \in \FF_b^\times$ we have $$\sum_{j=0}^{n-1} \chi^j(x)=\left\{
\begin{array}{ll}
n & \mbox{ if } \chi(x)=1,\\
0 & \mbox{ otherwise}. 
\end{array}\right.$$
Since the order of $\chi$ is $n$ it follows that $\chi(x)=1$ if and only if $x=c^n$ for some $c \in \FF_b^\times$. Note that for given $x \in \FF_b^\times$ the equation $x=c^n$ has zero or exactly $\gcd(n,b-1)=n$ solutions $c \in \FF_b^\times$, since $n|(b-1)$. Then we have
\begin{align*}
|S_n(\psi)| & = \left|\sum_{c \in \FF_b^\times} \psi(c^n) \right|= \left|n \sum_{x \in \FF_b^\times \atop \chi(x)=1} \psi(x)\right| \\ & = \left| \sum_{x \in \FF_b^\times} \sum_{j=0}^{n-1} \chi^j(x) \psi(x)\right|= \left|\sum_{j=0}^{n-1}G(\chi^j,\psi)\right|\le (n-1) \sqrt{b}+1,
\end{align*}
where we used Lemma~\ref{leGI}.
\end{proof}

Now we apply Gauss sums of type II to linear congruential pseudorandom numbers.

\begin{definition}\rm
A sequence given by the recursion $$x_{n+1}=a x_n +c\ \ \ \mbox{ for }\ n \in \NN_0,$$  where $x_0,a,c \in \FF_b$ with $a \not\in \{0,1\}$ and all algebraic operations carried out in $\FF_b$ is called a {\it linear congruential pseudorandom number generator}. 
\end{definition}
Since $a\not=1$, the elements $x_n$ are given explicitly by the formula 
\begin{equation}\label{LCG_expl}
x_n=a^n x_0+\frac{a^n-1}{a-1} c \ \ \ \mbox{ for }\ n \in \NN_0.
\end{equation}
If $c \not=(1-a) x_0$, then the sequence $(x_n)_{n \ge 0}$ is $T$-periodic, where $T$ is the order of $a \pmod{b}$.

Consider now the $T$-element point set $\cP_T=\{x_0/b,x_1/b,\ldots,x_{T-1}/b\}$ in $[0,1)$ derived from a linear congruental pseudorandom number generator. Here $\mathbb{F}_b$ is identified with the integers $\{0,1,\ldots,b-1\}$. For simplicity we assume that $c=0$. Then it follows from \eqref{LCG_expl} that $$|S(\cP_T,F_h)|=\left|\sum_{n=0}^{T-1} \exp(2 \pi \icomp x_0 h a^n/b)\right|.$$ Let $w$ be a primitive root modulo $b$ and let $a=w^i$. Then we have $$T={\rm ord}_{\FF_b^\times}(a)={\rm ord}_{\FF_b^\times}(w^i)=\frac{b-1}{\gcd(b-1,i)}$$ and hence $\frac{b-1}{T}|i$. For fixed $n \in \{0,1,\ldots,T-1\}$ we have $a^n=w^{in}=w^{k \frac{b-1}{T}}$ if and only if $k \frac{b-1}{T} \equiv i n \pmod{b-1}$. Since $\frac{b-1}{T}|i$, the last congruence has exactly $\frac{b-1}{T}$ incongruent solutions $k$ modulo $b-1$. This shows that for fixed $n$ there are exactly $\frac{b-1}{T}$ different $x \in \FF_b^\times$ such that $a^n=x^{\frac{b-1}{T}}$. Therefore $$|S(\cP_T,F_h)|=\frac{T}{b-1}\left|\sum_{x \in \FF_b^\times} \exp(2 \pi \icomp x_0 h x^{\frac{b-1}{T}}/b)\right|.$$ The last exponential sum is a Gauss sum of type II and hence we can apply Lemma~\ref{leGII} 
and obtain 
\begin{equation}\label{bd_GII_cong}
|S(\cP_T,F_h)| \le \sqrt{b}
\end{equation}
whenever $h \not\equiv 0 \pmod{b}$. We remark that the bound on the Gauss sum of type II is only nontrivial if $T>\sqrt{b}$. 
However, there are several nontrivial estimates known for smaller $T$. In particular in \cite{BGK} the authors proved nontrivial bounds for any $T\ge b^\delta$ and $\delta>0$. 

Inserting the estimate \eqref{bd_GII_cong} into the Erd\H{o}s-Tur\'{a}n-Koksma inequality (Theorem~\ref{thETH}) finally we obtain $$D_T(\cP_T) \ll \sqrt{b} \frac{\log T}{T}.$$

Dealing with incomplete Gauss sums Niederreiter~\cite[Theorem~1]{nie74} showed a more general result which considers also parts of the  period.

\begin{theorem}[Niederreiter]
For the sequence $\cS=\{x_n/b\ : \ n=0,1,\ldots,N-1\}$ where $x_n$ are linear congruental pseudorandom numbers, $N <T$ and $T$ is the order of $a$, we have $D_N(\cS) \ll \sqrt{b}(\log b)^2/N$. 
\end{theorem}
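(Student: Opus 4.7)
The plan is to combine the Erd\H{o}s-Tur\'{a}n-Koksma inequality (Theorem \ref{thETH}) with a ``completion'' argument that reduces the incomplete exponential sum of length $N$ to the complete Gauss sum of type II over one full period $T$. The complete sum was already bounded by $\sqrt{b}$ in \eqref{bd_GII_cong}; completion will cost one factor of $\log T$, and a second logarithmic factor will come from summing $1/|h|$ in Erd\H{o}s-Tur\'{a}n-Koksma, which together produce the claimed $(\log b)^2$. First I would apply Theorem \ref{thETH} with $m = N$. Since $N < T \le b-1 < b$, every nonzero $h$ in the range $0 < |h| \le N$ automatically satisfies $h \not\equiv 0 \pmod{b}$, so only nondegenerate sums $S_N(h) := \sum_{n=0}^{N-1} e^{2\pi\icomp h x_n/b}$ appear. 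By the explicit formula \eqref{LCG_expl} I may assume $c=0$ (so $x_n = a^n x_0$ with $x_0 \neq 0$); the general case differs only by a global phase, which is invisible to absolute values.

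For the completion step I would expand the indicator of $\{0,\ldots,N-1\}$ in its discrete Fourier series modulo $T$,
\[
\mathbf{1}_{[0,N)}(n) \;=\; \frac{1}{T}\sum_{k=0}^{T-1} \alpha_k\, e^{2\pi\icomp kn/T}, \qquad \alpha_k \;=\; \sum_{m=0}^{N-1} e^{-2\pi\icomp km/T},
\]
and use the standard bound $\sum_{k=0}^{T-1}|\alpha_k| \ll T\log T$, which follows from $|\alpha_0| = N$ together with $|\alpha_k| \le T/(2\min(k,T-k))$ for $k \neq 0$. This rewrites $S_N(h) = \frac{1}{T}\sum_k \alpha_k U_k$ with
\[
U_k \;:=\; \sum_{n=0}^{T-1} e^{2\pi\icomp\left( h x_0 a^n/b\,+\,kn/T\right)}.
\]
The main obstacle, and the heart of the proof, is to show $|U_k| \le \sqrt{b}$ uniformly in $k$. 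My strategy is to read the twist $e^{2\pi\icomp kn/T}$ multiplicatively: since $n \mapsto a^n$ is a bijection from $\{0,\ldots,T-1\}$ onto the subgroup $H = \langle a \rangle$ of $\FF_b^{\times}$, the assignment $\chi_k(a^n) = e^{2\pi\icomp kn/T}$ defines a multiplicative character of $H$, which I would extend to a character $\widetilde{\chi}_k$ of $\FF_b^{\times}$. Plugging in the projector $\mathbf{1}_H(y) = \frac{T}{b-1}\sum_{\psi|_H = 1}\psi(y)$ then expresses $U_k$ as $\frac{T}{b-1}$ times a sum of $(b-1)/T$ Gauss sums of type I, each with nontrivial additive character (because $x_0 \neq 0$ and $h \not\equiv 0\pmod{b}$ force $hx_0 \not\equiv 0\pmod{b}$). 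Lemma \ref{leGI} then delivers $|U_k| \le \sqrt{b}$.

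Assembling the pieces yields $|S_N(h)| \le \frac{\sqrt{b}}{T}\sum_k |\alpha_k| \ll \sqrt{b}\log T \le \sqrt{b}\log b$ for every admissible $h$. Substituting into Theorem \ref{thETH} and using $\sum_{0<|h|\le N} 1/|h| \ll \log N \le \log b$ then completes the argument with
\[
D_N(\cS) \;\ll\; \frac{1}{N} + \frac{\sqrt{b}\,(\log b)(\log N)}{N} \;\ll\; \frac{\sqrt{b}(\log b)^2}{N},
\]
as claimed.
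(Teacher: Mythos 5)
Your proof is correct and complete. Note that the paper does not actually prove this theorem: it only treats the full-period case $N=T$ (via the type-II Gauss sum bound \eqref{bd_GII_cong}) and then cites Niederreiter's paper for the incomplete case, remarking only that it "deals with incomplete Gauss sums." Your argument supplies exactly the missing step, and it is the standard completion technique that the citation alludes to: expanding $\mathbf{1}_{[0,N)}$ in its discrete Fourier series modulo $T$ costs a factor $\sum_k|\alpha_k|/T\ll\log T$, and the resulting twisted complete sums $U_k$ are handled by reading $e^{2\pi\icomp kn/T}$ as a multiplicative character of $H=\langle a\rangle$, extending it to $\FF_b^{\times}$, and inserting the projector $\mathbf{1}_H=\frac{T}{b-1}\sum_{\rho|_H=1}\rho$ so that Lemma~\ref{leGI} gives $|U_k|\le\frac{T}{b-1}\cdot\frac{b-1}{T}\cdot\sqrt{b}=\sqrt{b}$ (the term with trivial multiplicative character contributes $|-1|=1\le\sqrt{b}$, so the uniform bound survives). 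Two small points are worth making explicit. First, the reduction to $c=0$ replaces $x_0$ by $y_0=x_0+c/(a-1)$, and $y_0\neq 0$ is precisely the nondegeneracy condition $c\neq(1-a)x_0$ under which the paper asserts the sequence has period $T$; this hypothesis is needed for the additive character in the Gauss sums to be nontrivial. Second, your bound $\sum_k|\alpha_k|\ll T\log T$ should be read as $\ll T(1+\log T)$ to cover very small $T$, which changes nothing since $T\ge 2$. With these observations, the assembly via Theorem~\ref{thETH} with $m=N<T<b$ (so that every relevant $h$ satisfies $h\not\equiv 0\pmod{b}$) gives $D_N(\cS)\ll N^{-1}\sqrt{b}(\log b)(\log N)\ll\sqrt{b}(\log b)^2/N$ as claimed.
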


\subsection{$b$-adic Numbers}

Within this section let $b \ge 2$ be a prime number. The set of $b$-adic numbers is defined as the set of formal sums
\begin{equation*}
\mathbb{Z}_b = \left\{z = \sum_{r=0}^\infty z_r b^r\, : \, z_r \in \{0,\ldots,b-1\} \mbox{ for all } r \in \NN_0\right\}.
\end{equation*}
The set $\mathbb{N}_0$ of nonnegative integers is a subset of $\mathbb{Z}_b$. For two nonnegative integers $y, z \in \mathbb{Z}_b$, the sum $y+z \in \mathbb{Z}_b$ is defined as the usual sum of integers. The addition can be extended to all $b$-adic numbers with the addition carried out in the usual manner. For instance, the inverse of $1 \in \mathbb{Z}_b$ is given by the formal sum
\begin{equation*}
(b-1) + (b-1) b + (b-1) b^2 + \cdots.
\end{equation*}
Then we have
\begin{align*}
1 + [(b-1) + (b-1) b + (b-1) b^2 + \cdots ]  = & 0 + (1 + (b-1)) b + (b-1) b^2 + \cdots \\  = & 0 + 0 b + (1 + (b-1)) b^2 + \cdots \\ = & 0 b + 0 b^2 + \cdots = 0.
\end{align*}
The set $\mathbb{Z}_b$ with this addition then forms an abelian group.

The set of $b$-adic numbers has various applications to QMC theory. In the following we present one example in the context of lattice point sets. Other examples are to be found, for example, in \cite{hel2009,hel2010,pill2013}.

\subsubsection*{Example: extensible lattice point sets}

One disadvantage of rank-1 lattice point sets is their dependence on the
cardinality $N$ of the resulting point set. If one constructs a generating vector of a
 lattice rule of cardinality $N$ with good quality, it does not mean that
the same vector can be used to generate a lattice point set of good quality
which uses $N' \not= N$ points. 

Extensible lattice rules have the property that the number $N$ of points in the node set may be increased while retaining the existing points. Their definition is based on $b$-adic numbers. Let $\bsa \in \ZZ_b^s$ and define the infinite sequence $\cS_{\bsa}=(\bsx_n)_{n \ge 0}$ by $\bsx_n=\{\bsa \phi_b(n)\}$, where $\phi_b$ is the $b$-adic radical inverse function as defined in Section~\ref{secDig} and where the fractional part function $\{\cdot\}$ is applied component-wise. 

The so constructed sequence has the property that any initial segment with $N=b^m$ points is a rank-1 lattice point set. Indeed, for $m \in  \NN$ and $\bsa_m:= \bsa \pmod{b^m}$ (applied component-wise) we have $$\{ \{\bsa \phi_b(n)\} \ : \ n=0,1,\ldots,b^m-1\} = \left\{ \left\{\frac{\ell}{b^m} \bsa_m\right\}\ : \ \ell =0,1,\ldots,b^m -1\right\}=\cP(\bsa_m,b^m).$$ Furthermore, for $\overline{m} \ge m$ we have $\cP(\bsa_m,b^m) \subseteq \cP(\bsa_{\overline{m}},b^{\overline{m}}).$

It has been shown by Hickernell and Niederreiter \cite{HN2003} that there exist $\bsa \in \ZZ_b^s$ such that for all $\varepsilon>0$ $$D_N^\ast(\cS_{\bsa}) \ll_{s,\varepsilon} \frac{(\log N)^{s+1} (\log \log N)^{1+\varepsilon}}{N} \ \ \mbox{ for all }\ N=b,b^2,b^3,\ldots.$$  More results on extensible lattice point sets can be found in \cite{CKN06,DPW06,HHLL,NP2009}.

\subsection{Diophantine Approximation}\label{secDioph}

Diophantine approximation deals with the problem of approximating real numbers by rational numbers, or, in the multivariate case, of approximating real vectors by rational vectors. In dimension one the theory of continued fractions plays an utmost important role in this field. But also in the multivariate case there are many important theorems in this area such as Dirichlet's approximation theorem or Minkowski's theorem on linear forms which is a corollary to Minkowski's fundamental theorem (Theorem~\ref{thmink}); see, for example, \cite{baker,cass,HW}:
\begin{theorem}[Dirichlet]\label{thDir}
Let $\alpha_1,\ldots,\alpha_s \in \RR$. Then there exists a vector $(p_1,\ldots,p_s,q) \in \ZZ^s\times \NN$, such that $$|q \alpha_j -p_j| \le q^{1/s}\ \ \mbox{ for all }\ j=1,2,\ldots,s.$$ Moreover, if at least one $\alpha_j$ is irrational, then there are infinitely many tuples $(p_1,\ldots,p_s,q) \in \ZZ^s \times \NN$ with this property.
\end{theorem}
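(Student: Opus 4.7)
The plan is to deduce Dirichlet's simultaneous approximation theorem from Minkowski's fundamental theorem (Theorem~\ref{thmink}) applied to the standard lattice $L=\ZZ^{s+1}$ in $\RR^{s+1}$, for which $\det(L)=1$. I read the asserted bound in its standard form $|q\alpha_j-p_j|\le q^{-1/s}$; the exponent $q^{1/s}$ appearing in the statement looks like a typographical slip, since the inequality is only informative when the right-hand side tends to $0$ with $q$. The strategy is first to produce one nonzero integer tuple realising the inequality, then to iterate in the irrational case to obtain infinitely many.

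For fixed $Q\in\NN$ and a small $\varepsilon>0$, I would consider the convex, origin-symmetric set
\begin{equation*}
C_{Q,\varepsilon}=\{(x_0,x_1,\ldots,x_s)\in\RR^{s+1} \ : \ |x_0|\le Q^s,\ |\alpha_j x_0-x_j|\le \tfrac{1}{Q}+\varepsilon \text{ for } j=1,\ldots,s\}.
\end{equation*}
The unimodular change of variables $y_0=x_0$, $y_j=\alpha_j x_0-x_j$ (Jacobian of absolute value $1$) sends $C_{Q,\varepsilon}$ to the axis-aligned box $[-Q^s,Q^s]\times[-\tfrac{1}{Q}-\varepsilon,\tfrac{1}{Q}+\varepsilon]^s$, of volume $2^{s+1}(1+\varepsilon Q)^s>2^{s+1}\det(L)$. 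Theorem~\ref{thmink} therefore delivers a nonzero integer vector $(q,p_1,\ldots,p_s)\in C_{Q,\varepsilon}\cap\ZZ^{s+1}$. If $q=0$ then $|p_j|\le\tfrac{1}{Q}+\varepsilon<1$ (choosing $Q\ge 2$ and $\varepsilon<\tfrac{1}{2}$), forcing every $p_j=0$, a contradiction; after a sign change I may therefore assume $1\le q\le Q^s$, hence $Q\ge q^{1/s}$. Since the compact set $C_{Q,1}$ contains only finitely many lattice points, I can let $\varepsilon\downarrow 0$ along a subsequence that stabilises on one of them, yielding an integer tuple with $|q\alpha_j-p_j|\le\tfrac{1}{Q}\le q^{-1/s}$.

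For the infinitude clause, assume some $\alpha_{j_0}$ is irrational but that only finitely many admissible tuples $(p^{(1)},q^{(1)}),\ldots,(p^{(k)},q^{(k)})$ exist. Irrationality forbids $q^{(i)}\alpha_{j_0}=p^{(i)}_{j_0}$, so $\delta:=\min_{1\le i\le k}|q^{(i)}\alpha_{j_0}-p^{(i)}_{j_0}|>0$. Choosing $Q$ with $\tfrac{1}{Q}<\delta$ and rerunning the Minkowski argument produces a new solution with $|q\alpha_{j_0}-p_{j_0}|\le\tfrac{1}{Q}<\delta$, which cannot coincide with any tuple in the finite list, contradicting the hypothesis. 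The main obstacle I expect in this plan is the borderline case of Minkowski's theorem: the natural candidate parallelepiped has volume exactly $2^{s+1}=2^{s+1}\det(L)$, so some $\varepsilon$-inflation together with a compactness limit is indispensable to extract an honest lattice point; the remaining steps are routine bookkeeping.
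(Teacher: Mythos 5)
Your proof is correct, and it is worth noting at the outset that the paper itself offers no proof of this theorem: it merely states it alongside Minkowski's linear forms theorem and refers the reader to the references \cite{baker,cass,HW}. So there is no in-paper argument to compare against; what you have supplied is a self-contained derivation from the paper's Theorem~\ref{thmink}, which fits naturally with the paper's own remark that the linear forms theorem is a corollary of Minkowski's fundamental theorem. Your reading of the exponent as a typographical error for $q^{-1/s}$ is vindicated by the paper itself, which invokes Theorem~\ref{thDir} in the proof of the subsequent Proposition precisely in the form $|q\alpha_i - p_i|\le q^{-1/s}$ (and, as you implicitly observe, the statement with $q^{1/s}$ would be trivially satisfied by $q=1$ and $p_j$ the nearest integer to $\alpha_j$). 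The details all check out: the linear change of variables is unimodular, so the volume of $C_{Q,\varepsilon}$ is $2^{s+1}(1+\varepsilon Q)^s > 2^{s+1}\det(\ZZ^{s+1})$; the exclusion of $q=0$ and the sign normalisation are handled; and your $\varepsilon$-inflation followed by a stabilising subsequence among the finitely many lattice points of the compact set $C_{Q,1}$ is exactly the right device to reach the closed box of volume exactly $2^{s+1}$, since the paper's version of Minkowski demands strict inequality of volumes. The infinitude argument via $\delta:=\min_i|q^{(i)}\alpha_{j_0}-p^{(i)}_{j_0}|>0$ and a fresh application with $1/Q<\delta$ is the standard bootstrap and is complete. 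The only alternative route worth mentioning is the classical pigeonhole proof (partitioning $[0,1)^s$ into $Q^s$ congruent subcubes and considering the fractional parts $\{n\bsalpha\}$ for $n=0,1,\ldots,Q^s$), which avoids the geometry of numbers entirely and in particular the boundary-case fuss; your route has the virtue of using only machinery already present in the paper.
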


\begin{theorem}[Minkowski]\label{thMinLin}
Let $A=(a_{i,j})_{i,j=1}^n$ be a real matrix and let $c_1,\ldots,c_n \in \RR^+$. Consider the $n$ linear forms $$L_i(x_1,\ldots,x_n)=\sum_{j=1}^n a_{i,j} x_j\ \ \ \mbox{ for }\ i=1,2,\ldots,n.$$ Then the following holds: if $c_1 \cdots c_n \ge |{\rm det}(A)|$, then there exists a vector $(h_1,\ldots,h_n)\in \ZZ^n \setminus\{\bszero\}$ such that $|L_1(h_1,\ldots,h_n)| \le c_1$ and $|L_i(h_1,\ldots,h_n)| < c_i$ for all $i=2,\ldots,s$.
\end{theorem}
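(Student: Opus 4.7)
The plan is to apply Minkowski's fundamental theorem (Theorem~\ref{thmink}) to the lattice $L=\ZZ^n$, of determinant $1$, using a convex symmetric body obtained by pulling back an axis-aligned box in $\RR^n$ through the linear map $A$.

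First I would assume that $A$ is invertible. For each $\varepsilon>0$, consider the bounded open set
\[
U_\varepsilon = \{\bsh\in\RR^n : |L_1(\bsh)| < c_1+\varepsilon \text{ and } |L_i(\bsh)| < c_i \text{ for } i=2,\ldots,n\},
\]
which is convex and symmetric about the origin since it equals $A^{-1}(B_\varepsilon)$ for the box $B_\varepsilon := (-c_1-\varepsilon,\,c_1+\varepsilon)\times\prod_{i=2}^n(-c_i,c_i)$. The change-of-variables formula gives
\[
{\rm vol}(U_\varepsilon) = \frac{2^n (c_1+\varepsilon)\, c_2 \cdots c_n}{|\det(A)|} > \frac{2^n c_1 c_2 \cdots c_n}{|\det(A)|} \ge 2^n,
\]
so Theorem~\ref{thmink} produces a nonzero lattice point $\bsh^{(\varepsilon)}\in U_\varepsilon\cap\ZZ^n$.

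The critical step is to pass to the limit $\varepsilon \to 0^+$ while preserving the strict inequalities. Because $A$ is invertible, the preimage $A^{-1}(B_1)$ is bounded and contains every $\bsh^{(\varepsilon)}$ with $\varepsilon\in(0,1]$, so only finitely many integer vectors arise as $\varepsilon$ varies. I would therefore select a sequence $\varepsilon_k\downarrow 0$ along which $\bsh^{(\varepsilon_k)}$ equals a fixed $\bsh^\ast\in\ZZ^n\setminus\{\bszero\}$. The bounds $|L_i(\bsh^\ast)|<c_i$ for $i\ge 2$ are inherited directly from membership in each $U_{\varepsilon_k}$, while $|L_1(\bsh^\ast)| < c_1+\varepsilon_k$ for all $k$ yields $|L_1(\bsh^\ast)| \le c_1$ in the limit, giving the required vector.

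For the remaining degenerate case $\det(A)=0$, if $\ker(A)\cap\ZZ^n$ contains a nonzero $\bsh$ then $L_i(\bsh)=0<c_i$ for all $i$ and we are done; otherwise $A$ is injective on $\ZZ^n$, so $A(\ZZ^n)$ is a free abelian subgroup of rank $n$ lying in the column space of $A$, whose dimension $r$ is strictly less than $n$, but a discrete subgroup of $\RR^r$ has rank at most $r$, so $A(\ZZ^n)$ cannot be discrete and must accumulate at the origin, producing nonzero $\bsh_k\in\ZZ^n$ with $A\bsh_k\to\bszero$, and every sufficiently large $k$ satisfies all $n$ inequalities strictly. The main obstacle in the argument is the mixed nature of the inequalities (non-strict in coordinate $1$, strict in the remaining coordinates), which blocks a direct one-shot application of Theorem~\ref{thmink} and forces both the $\varepsilon$-enlargement of $c_1$ and the subsequent compactness extraction above.
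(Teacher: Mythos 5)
Your proof is correct and follows exactly the route the paper indicates: the paper gives no proof of Theorem~\ref{thMinLin}, merely noting that it is a corollary of Minkowski's fundamental theorem (Theorem~\ref{thmink}) and citing standard references, and your $\varepsilon$-enlargement of the box, volume computation via change of variables, and pigeonhole extraction of a fixed integer vector as $\varepsilon\downarrow 0$ is the standard way to fill that in. Your separate treatment of the degenerate case $\det(A)=0$ (via the kernel or the non-discreteness of a rank-$n$ subgroup inside a lower-dimensional subspace) is a careful addition that many textbook versions simply assume away.
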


The applications of Diophantine approximation to QMC, in particular to discrepancy theory, are various and numerous and  cannot all be cited here. We just mention some examples such as \cite{beck,DT,kuinie,nie72,nie73,nie78,niesiam}. Furthermore, applications of Diophantine approximation to QMC are not only restricted to the archimedean case. Many results have non-archimedean analogs, for example in the context of approximations of Laurent series over finite fields by rational functions, which can also be applied to problems in QMC. This plays a major role, e.g., in the analysis of digital nets and sequences such as polynomial lattice point sets or digital Kronecker sequences. See \cite{LN93,LP13,LP14,niesiam,nie95} for examples.

Here we present one classical application which is taken from \cite{nie72} and which deals with the discrepancy of Kronecker sequences (see Section \ref{secLPS}).

\subsubsection*{Example: Discrepancy of Kronecker sequences}

One main problem in the theory of Diophantine approximation is to find bounds for $\|\bsh \cdot \bsalpha\|$, where $\| \cdot\|$ denotes the distance to the nearest integer function, i.e., $\|x\|=\min(\{x\},1-\{x\})$ for $x \in \RR$ and where $\bsalpha=(\alpha_1,\ldots,\alpha_s)$ and $\bsh \in \ZZ^s$. This problem is directly linked to the discrepancy of Kronecker sequences $(\{n \bsalpha\})_{n \ge 0}$. It is well known (and can easily be deduced from Weyl's criterion) that a Kronecker sequence is uniformly distributed if and only if $1,\alpha_1,\ldots,\alpha_s$ are linearly independent over the rationals.

\begin{definition}\rm
For a real number $\eta$, an $s$-tuple $\bsalpha \in (\RR \setminus \QQ)^s$ is said to be of {\it type} $\eta$, if $\eta$ is the infimum of all numbers $\sigma$ for which there exists a positive constant $c=c(\sigma,\bsalpha)$ such that $$r(\bsh)^{\sigma} \| \bsh \cdot \bsalpha\| \ge c\ \ \ \mbox{ for all }\ \bsh \in \ZZ^s \setminus \{\bszero\},$$ where  $r(\bsh) =\prod_{j=1}^s \max(1,|h_j|)$ for $\bsh=(h_1,\ldots,h_s)\in \RR^s$.
\end{definition}

The following result follows easily from the above two theorems:

\begin{proposition}
The type $\eta$ of an irrational vector $\bsalpha$ is at least one.
\end{proposition}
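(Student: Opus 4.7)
The plan is to show that no $\sigma<1$ belongs to the set whose infimum defines $\eta$, which amounts to proving that for every $\sigma<1$ one has $\inf_{\bsh\in\ZZ^s\setminus\{\bszero\}} r(\bsh)^{\sigma}\|\bsh\cdot\bsalpha\|=0$. This will be done by exhibiting, for arbitrarily large parameters $Q$, integer vectors $\bsh\neq\bszero$ for which $r(\bsh)^{\sigma}\|\bsh\cdot\bsalpha\|$ is arbitrarily small, with no use of the irrationality assumption beyond ensuring nontriviality.

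The production of such $\bsh$ is a direct application of Minkowski's theorem on linear forms (Theorem~\ref{thMinLin}), used with $n=s+1$ and variables $(p,h_1,\ldots,h_s)\in\ZZ^{s+1}$. The relevant forms are
\begin{equation*}
L_1(p,h_1,\ldots,h_s)=h_1\alpha_1+\cdots+h_s\alpha_s-p,\qquad L_{j+1}(p,h_1,\ldots,h_s)=h_j\quad(j=1,\ldots,s),
\end{equation*}
whose coefficient matrix has determinant of absolute value $1$. Fix $Q>1$ and choose bounds $c_1=Q^{-s}$ and $c_2=\cdots=c_{s+1}=Q$; their product equals $1$, so Minkowski's theorem yields a nonzero integer vector with $|L_1|\le Q^{-s}$ and $|h_j|<Q$ for all $j$.

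I would next verify that the resulting $\bsh=(h_1,\ldots,h_s)$ is itself nonzero: if all $h_j$ vanished, then $|p|=|L_1|\le Q^{-s}<1$ forces $p=0$, contradicting the nonzero conclusion. Hence $\bsh\neq\bszero$, and since $p\in\ZZ$ one has $\|\bsh\cdot\bsalpha\|\le|L_1|\le Q^{-s}$, while $r(\bsh)\le Q^{s}$. Therefore
\begin{equation*}
r(\bsh)^{\sigma}\|\bsh\cdot\bsalpha\|\le Q^{s\sigma}\cdot Q^{-s}=Q^{s(\sigma-1)},
\end{equation*}
which tends to $0$ as $Q\to\infty$ whenever $\sigma<1$. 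Consequently no positive constant $c$ can satisfy $r(\bsh)^{\sigma}\|\bsh\cdot\bsalpha\|\ge c$ uniformly in $\bsh$, so every $\sigma<1$ is excluded and $\eta\ge 1$.

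There is really no deep obstacle here; the only bookkeeping is choosing the linear forms and the constants $c_i$ so that the determinant condition in Theorem~\ref{thMinLin} is met with equality and the bound $Q^{s(\sigma-1)}$ emerges automatically. The irrationality of the components of $\bsalpha$ is not needed for $\eta\ge 1$ itself, only for the Kronecker sequence interpretation; in particular if some $\bsh^{\ast}\cdot\bsalpha\in\ZZ$ with $\bsh^{\ast}\neq\bszero$ the conclusion is even stronger, as the product vanishes outright.
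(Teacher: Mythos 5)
Your argument is correct, but it reaches the conclusion by a genuinely different route than the paper. The paper first invokes Dirichlet's simultaneous approximation theorem to produce rational approximations $(p_1,\ldots,p_s,q)$ with $|q\alpha_j-p_j|\le q^{-1/s}$, and only then applies Minkowski's linear forms theorem to the auxiliary forms $L_i=x_i$ and $L_{s+1}=p_1x_1+\cdots+p_sx_s-qx$ (a transference step) to manufacture an $\bsh$ with $r(\bsh)^{\sigma}\|\bsh\cdot\bsalpha\|\le s\,q^{\sigma-1}$. You instead apply Minkowski's theorem once, directly to the forms $h_1\alpha_1+\cdots+h_s\alpha_s-p$ and $h_1,\ldots,h_s$ with $c_1=Q^{-s}$, $c_2=\cdots=c_{s+1}=Q$; this is in effect the standard Minkowski proof of the \emph{linear-form} version of Dirichlet's theorem, and it yields $r(\bsh)^{\sigma}\|\bsh\cdot\bsalpha\|\le Q^{s(\sigma-1)}$ immediately. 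Your route is shorter and avoids the intermediate approximation vector entirely; the paper's route has the virtue of displaying the transference between simultaneous approximation and small linear forms, which is a theme the surrounding section wants to illustrate. Your bookkeeping is careful where it needs to be: you verify the determinant condition, you check explicitly that $\bsh\neq\bszero$ (all $h_j=0$ would force $p=0$ since $|p|\le Q^{-s}<1$, contradicting nonvanishing of the full vector — a point the paper leaves implicit), and you correctly observe that the set of admissible $\sigma$ is upward closed since $r(\bsh)\ge 1$, so excluding every $\sigma<1$ does give $\eta\ge 1$. Your closing remark that irrationality is not actually needed (the case $\bsh^{\ast}\cdot\bsalpha\in\ZZ$ makes the admissible set empty and the infimum $+\infty$) is also accurate.
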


\begin{proof}
Assume that the type $\eta$ of $\bsalpha \in (\RR\setminus \QQ)^s$ is less then one. According to Theorem~\ref{thDir} there exist infinitely many $(p_1,\ldots,p_s,q) \in \ZZ^s\times \NN$ such that $|q \alpha_i-p_i| \le q^{-1/s}$ for all $i=1,2,\ldots,s$. Now consider the linear forms $L_i(x_1,\ldots,x_s,x)=x_i$ for $i=1,2,\ldots,s$ and $L_{s+1}(x_1,\ldots,x_s,x)=p_1 x_1+\cdots+p_s x_s-q x$ with $q$ as absolute value of the corresponding determinant. According to Theorem~\ref{thMinLin} there exists a vector $(h_1,\ldots,h_s,h)\in \ZZ^{s+1}\setminus\{\bszero\}$ such that $$|h_j| \le q^{1/s} \ \ \mbox{ for all } \ j=1,2,\ldots,s \ \mbox{ and } \ |h_1p_1+\cdots+h_s p_s-qh|<1.$$ Since $h_1p_1+\cdots+h_s p_s-qh \in \ZZ$ we obtain that $qh=h_1p_1+\cdots+h_s p_s$. 

Now for any $\sigma \in (\eta,1)$ we have (recall that $q\ge 1$)
\begin{align*}
|h_1\alpha_1+\cdots +h_s \alpha_s -h| r(\bsh)^{\sigma} & \le  |h_1 q \alpha_1+\cdots +h_s q \alpha_s -q h| \frac{1}{q} \prod_{j=1}^s \max(1,q^{1/s})^{\sigma}\\
& = | h_1(q \alpha_1-p_1)+\cdots +h_s (q \alpha_s-p_s)| \frac{1}{q^{1-\sigma}} \le \frac{s}{q^{1-\sigma}}
\end{align*}
and hence $$\inf_{\bsh \not=\bszero} r(\bsh)^{\sigma} \|\bsh \cdot \bsalpha\| \le \frac{s}{q^{1-\sigma}}$$ for infinitely many $q \in \NN$. Thus the infimum is zero and the result follows.
\end{proof}

On the other hand it has been shown by Schmidt \cite{schm1970} that $\bsalpha=(\alpha_1,\ldots,\alpha_s)$, with real algebraic components for which $1,\alpha_1,\ldots,\alpha_s$ are linearly independent over $\QQ$, is of type $\eta=1$. In particular, $({\rm e}^{r_1},\ldots,{\rm e}^{r_s})$ with distinct nonzero rationals $r_1,\ldots,r_s$ or $(\sqrt{p_1},\ldots,\sqrt{p_s})$ with distinct prime numbers $p_1,\ldots,p_s$ are of type $\eta=1$.

\begin{theorem}[Niederreiter]
Let $\bsalpha$ be an $s$-tuple of irrationals of type $\eta=1$. Then the discrepancy of the Kronecker sequence $\cS_{\bsalpha}=(\{n \bsalpha\})_{n \ge 0}$ satisfies for all $\varepsilon>0$ $$D_N(\cS_{\bsalpha}) \ll_{s,\varepsilon}\frac{1}{N^{1-\varepsilon}}.$$
\end{theorem}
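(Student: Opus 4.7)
The plan is to apply the Erd\H{o}s--Tur\'{a}n--Koksma inequality (Theorem~\ref{thETH}), translate the exponential-sum bound for the Kronecker sequence into a Diophantine sum over $\bsh$, and then exploit the type $\eta=1$ hypothesis. First I would observe that since $\bsalpha$ is of finite type, the numbers $1,\alpha_1,\ldots,\alpha_s$ are necessarily linearly independent over $\QQ$ (otherwise some $\bsh\neq\bszero$ would force $\|\bsh\cdot\bsalpha\|=0$, contradicting the type hypothesis), so $\bsh\cdot\bsalpha\notin\ZZ$ for every $\bsh\in\ZZ^s\setminus\{\bszero\}$. The geometric series formula then gives
\[
|S(\cP_N,F_{\bsh})| \;=\; \Big|\sum_{n=0}^{N-1}\exp(2\pi\icomp\, n\,\bsh\cdot\bsalpha)\Big| \;\le\; \frac{1}{|1-\exp(2\pi\icomp\,\bsh\cdot\bsalpha)|} \;\le\; \frac{1}{2\,\|\bsh\cdot\bsalpha\|},
\]
and inserting this into Theorem~\ref{thETH} with $m=N$ yields
\[
D_N(\cS_{\bsalpha}) \;\ll_s\; \frac{1}{N} \;+\; \frac{1}{2N}\,\Sigma(N), \qquad \Sigma(N) := \sum_{0<|\bsh|_{\infty}\le N}\frac{1}{r(\bsh)\,\|\bsh\cdot\bsalpha\|}.
\]

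Next I would invoke the type $\eta=1$ hypothesis: for every $\delta>0$ there is a constant $c=c(\delta,\bsalpha)>0$ such that $\|\bsh\cdot\bsalpha\|\ge c\,r(\bsh)^{-(1+\delta)}$ for all $\bsh\neq\bszero$. The task then reduces to the Diophantine estimate $\Sigma(N)\ll_{s,\delta} N^{\delta'}$ for some $\delta'=\delta'(\delta,s)$ which can be made arbitrarily small by taking $\delta$ small; inserting this into the display above immediately produces $D_N(\cS_{\bsalpha})\ll_{s,\delta} N^{-(1-\delta')}$, which, given the arbitrariness of $\delta$, is the claimed bound $N^{-(1-\varepsilon)}$.

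The hard part is obtaining a sufficiently sharp estimate for $\Sigma(N)$. The naive pointwise substitution $1/\|\bsh\cdot\bsalpha\|\le r(\bsh)^{1+\delta}/c$ yields only $\Sigma(N)\ll \sum_{|\bsh|_{\infty}\le N} r(\bsh)^{\delta}\ll N^{s(1+\delta)}$; re-optimizing the truncation parameter $m$ in Theorem~\ref{thETH} with this crude bound delivers merely $D_N\ll N^{-1/(s+1)+\varepsilon}$, which is far from the target. The refinement exploits the fact that $\|\bsh\cdot\bsalpha\|$ is genuinely small only on a very sparse set of $\bsh$. I would perform a dyadic decomposition of $\Sigma(N)$ according to the size of $\|\bsh\cdot\bsalpha\|$ and, at each level, bound the count of admissible $\bsh$ by combining the type $\eta=1$ lower bound on $r(\bsh)$ with the divisor-type estimate $\#\{\bsh\in\ZZ^s:r(\bsh)\le R\}\ll_s R(\log R)^{s-1}$ and a geometry-of-numbers argument controlling the number of $\bsh$ with prescribed small $\|\bsh\cdot\bsalpha\|$ (which in the one-dimensional case $s=1$ reduces to classical continued fraction theory). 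Summing the resulting geometric contributions over the dyadic levels then produces the required $N^{\delta'}$ bound and completes the proof.
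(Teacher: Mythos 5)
Your setup is correct and matches the paper's: the geometric-series bound $|S(\cP_N,F_{\bsh})|\le 1/(2\|\bsh\cdot\bsalpha\|)$, insertion into Erd\H{o}s--Tur\'{a}n--Koksma, the observation that finite type forces $\bsh\cdot\bsalpha\notin\ZZ$, and the correct diagnosis that the pointwise substitution $\|\bsh\cdot\bsalpha\|^{-1}\le c^{-1}r(\bsh)^{1+\delta}$ only yields $D_N\ll N^{-1/(s+1)+\varepsilon}$. But the entire difficulty of the theorem is concentrated in the estimate $\Sigma(N)\ll N^{\delta'}$, and there you only gesture at ``a geometry-of-numbers argument controlling the number of $\bsh$ with prescribed small $\|\bsh\cdot\bsalpha\|$'' without supplying it. That counting lemma is not an off-the-shelf fact and is the one genuinely new idea in the proof; as written, your argument has a hole exactly where the theorem lives. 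Moreover, the divisor-type estimate $\#\{\bsh: r(\bsh)\le R\}\ll R(\log R)^{s-1}$ that you propose to combine with it does not help here: the quantity you must control is the distribution of the \emph{values} $\|\bsh\cdot\bsalpha\|$, not the number of lattice points under a hyperbola.

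The missing ingredient is that the type condition, applied not to individual $\bsh$ but to \emph{differences and sums}, directly delivers the required sparsity --- no separate geometry-of-numbers input is needed. Fix a box $\{|h_j|\le n_j\ \forall j\}$. For $\bsh\neq\pm\bsh'$ in this box, $\bsh\pm\bsh'$ lies in the doubled box, so the type-one hypothesis gives $\|\bsh\cdot\bsalpha\pm\bsh'\cdot\bsalpha\|\ge c\,r(2\bsn)^{-1-\varepsilon}=:d$, hence $\big|\,\|\bsh\cdot\bsalpha\|-\|\bsh'\cdot\bsalpha\|\,\big|\ge d$. Thus each interval $[kd,(k+1)d)$ contains at most two of the values $\|\bsh\cdot\bsalpha\|$ and the interval $[0,d)$ contains none, so $\sum_{\bsh}\|\bsh\cdot\bsalpha\|^{-1}\le 2\sum_{k\le 1/(2d)}(kd)^{-1}\ll d^{-1}\log(1/d)\ll_{s,\varepsilon} r(\bsn)^{1+2\varepsilon}$. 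The remaining issue is that $\Sigma(N)$ carries the hyperbolic weight $1/r(\bsh)$ rather than ranging over a single box; the paper resolves this with an Abel-summation identity writing $\Sigma(N)=\sum_{n_1,\dots,n_s=1}^{N}\prod_j g_N(n_j)\sum_{|h_j|\le n_j}\|\bsh\cdot\bsalpha\|^{-1}$ with $g_N(n)=1/(n(n+1))$, after which the box estimate gives $\Sigma(N)\ll N^{2s\varepsilon}$. Your dyadic decomposition of the coordinates of $\bsh$ would serve the same purpose and can be made to work, but only once the separation lemma above is in place; without it the proof is incomplete.
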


\begin{proof}
The proof is according to \cite{nie72}. Using the formula for a geometric sum we obtain 
\begin{align*}
\left|\sum_{n=0}^{N-1} \exp(2 \pi \icomp \bsh \cdot \bsx_n) \right| & = \left|\sum_{n=0}^{N-1} \exp(2 \pi \icomp \bsh \cdot \bsalpha)^n \right|\\
& \le  \frac{2}{|\exp(2 \pi \icomp \bsh \cdot \bsalpha)-1|}= \frac{1}{|\sin(2 \pi \bsh \cdot \bsalpha)|} \le \frac{1}{2 \| \bsh \cdot \bsalpha\|}.
\end{align*}
Inserting this into the Erd\H{o}s-Tur\'{a}n-Koksma inequality (Theorem~\ref{thETH}) we obtain for all $m \in \NN$
$$D_N(\cS_{\alpha}) \ll_s \frac{1}{m}+\frac{1}{N} \sum_{0 < |\bsh|_{\infty} \le m} \frac{1}{r(\bsh)} \frac{1}{\| \bsh \cdot \bsalpha\|}.$$ Now we use the identity
$$\sum_{0 < |\bsh|_{\infty} \le m} \frac{1}{r(\bsh)} \frac{1}{\| \bsh \cdot \bsalpha\|} = \sum_{n_1,\ldots,n_s=1}^m f(n_1,\ldots,n_s) \sum_{\bsh \in \ZZ^s\setminus\{\bszero\}\atop |h_j| \le n_j \ \forall j} \frac{1}{\| \bsh \cdot \bsalpha\|},$$ 
where $f(n_1,\ldots,n_s)=\prod_{j=1}^s g_m(n_j)$ and where $g_m(n)=1/(n(n+1))$ if $n\in \{1,\ldots,m-1\}$ and $g_m(m)=1/m$. This can be shown by computing the total coefficient of $1/\|\bsh \cdot \bsalpha\|$ on the right-hand side of the equation (see \cite[p.~222]{nie72} for details). 

In a first step we estimate the inner sum of the above double sum. Since $\bsalpha$ is of type one we obtain for all $\bsh,\bsh' \in \ZZ^s\setminus \{\bszero\}$ satisfying $|h_j|, |h_j'| \le n_j$ for all $j=1,\ldots,s$, and $\bsh \not=\pm \bsh'$ that $$\| \bsh \cdot \bsalpha \pm \bsh' \cdot \bsalpha\| =\|(\bsh \pm \bsh')\cdot\bsalpha\|\ge c r(\bsh+\bsh')^{-1-\varepsilon} \ge c r(2 \bsn)^{-1-\varepsilon}=:d$$ for all $\varepsilon >0$, where $c=c(\varepsilon,\alpha)$ and where $\bsn=(n_1,\ldots,n_s)$. Since $\|x \pm y\| \le | \|x\|-\|y\||$ we obtain $$|\ \|\bsh \cdot \bsalpha\| - \| \bsh' \cdot \bsalpha\| \ | \ge d.$$ Hence in each of the intervals $[k d, (k+1)d)$ for $k=0,1,\ldots, \lfloor 1/(2d)\rfloor$, there can lie at most two numbers of the form $\|\bsh \cdot \bsalpha\|$, with no such number in the interval $[0,d)$, since we also have $\|\bsh \cdot \bsalpha\| \ge d$. Therefore
$$\sum_{\bsh \in \ZZ^s\setminus\{\bszero\}\atop |h_j| \le n_j \ \forall j} \frac{1}{\| \bsh \cdot \bsalpha\|} \le 2 \sum_{k=1}^{\lfloor 1/(2d)\rfloor} \frac{1}{kd} \le \frac{2}{d}(1+\log \lfloor 1/(2d)\rfloor) \ll_{s,\varepsilon} r(\bsn)^{1+2 \varepsilon}.$$

Now we obtain
\begin{align*}
\sum_{0 < |\bsh|_{\infty} \le m} \frac{1}{r(\bsh)} \frac{1}{\| \bsh \cdot \bsalpha\|} & \ll_{s,\varepsilon}  \sum_{n_1,\ldots,n_s=1}^m f(n_1,\ldots,n_s) (n_1n_2\cdots n_s)^{1+2 \varepsilon}\\
& = \left(\sum_{n=1}^m  g_m(n) n^{1+2 \varepsilon}\right)^s \ll m^{2 s \varepsilon},
\end{align*}
where the last estimate easily follows from the definition of $g_m$. Finally we obtain 
$$D_N(\cS_{\alpha}) \ll_s \frac{1}{m}+\frac{m^{2 s \varepsilon}}{N}$$ and the result follows by choosing $m=N$.
\end{proof}

% Created 8 October 2013; JD
% Version 26 March; JD

\section{Probability Theory}\label{sec_prob}

The probabilistic method in general is used to show the existence of mathematical objects with certain properties by considering a probability measure on a class of objects and proving that the probability that a random object has the desired properties is positive or even close to 1. 
This concept is crucially used in many existence proofs in QMC.  

\subsection{Hoeffding's Inequality}\label{HI}

Often, one wants to construct an object satisfying many constraints. 
Using the probabilistic method, the simplest way to achieve this is to show that the probability that one constraint is not satisfied is extremely small and then applying a union bound over all constraints. 
Extremely small probabilities can be obtained for the deviation from the mean for sums of independent random variables. 
A general and useful tool in the case of bounded random variables is Hoeffding's inequality \cite{hoe1963}.

\begin{theorem}[Hoeffding]\label{thm:hoein}
 Let $X_1,\dots,X_N$ be independent real valued random variables such that $a_i \le X_i - \EE(X_i) \le b_i$ for $i=1,\dots,N$ almost surely.
 Then for all $t>0$
 $$
   \mathrm{Prob} \left( \left|\sum_{i=1}^N (X_i-\EE(X_i)) \right| > t \right) \le 2 \exp \left( -\frac{2t^2}{\sum_{i=1}^N (b_i-a_i)^2}\right).
 $$
 In particular, if $\EE(X_i) =0$ and $|X_i| \le 1$ almost surely for $i=1,\dots,N$, then
 $$
   \mathrm{Prob} \left( \left|\sum_{i=1}^N X_i \right| > t \right) \le 2 \exp \left( - \frac{t^2}{2N}\right).
 $$
\end{theorem}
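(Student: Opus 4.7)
The plan is to use the exponential moment (Chernoff) method. First set $Y_i = X_i - \EE(X_i)$, so each $Y_i$ is centered with $a_i \le Y_i \le b_i$ almost surely, and put $S_N = \sum_{i=1}^N Y_i$. For any $\lambda > 0$, Markov's inequality applied to $\eeul^{\lambda S_N}$ together with independence of the $Y_i$ yields
\begin{equation*}
\mathrm{Prob}(S_N > t) \le \eeul^{-\lambda t}\,\EE\bigl(\eeul^{\lambda S_N}\bigr) = \eeul^{-\lambda t}\prod_{i=1}^N \EE\bigl(\eeul^{\lambda Y_i}\bigr).
\end{equation*}
The reduction to the symmetric two-sided bound is then handled by applying the same argument to $-S_N$ (note that $-b_i \le -Y_i \le -a_i$, so the quantity $(b_i-a_i)^2$ is unchanged) and taking a union bound, which is responsible for the factor $2$ in the statement.

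The key step, and the main technical obstacle, is Hoeffding's lemma: for a centered random variable $Y$ with $a \le Y \le b$ almost surely and any $\lambda \in \RR$,
\begin{equation*}
\EE\bigl(\eeul^{\lambda Y}\bigr) \le \exp\!\bigl(\lambda^2 (b-a)^2/8\bigr).
\end{equation*}
I would prove this by writing the convex combination $Y = \tfrac{b-Y}{b-a}\, a + \tfrac{Y-a}{b-a}\, b$ and invoking convexity of $u \mapsto \eeul^{\lambda u}$ to obtain $\eeul^{\lambda Y} \le \tfrac{b-Y}{b-a}\eeul^{\lambda a} + \tfrac{Y-a}{b-a}\eeul^{\lambda b}$. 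Taking expectations and using $\EE(Y)=0$ gives $\EE(\eeul^{\lambda Y}) \le \tfrac{b}{b-a}\eeul^{\lambda a} - \tfrac{a}{b-a}\eeul^{\lambda b}$. Setting $\varphi(\lambda)$ equal to the logarithm of the right-hand side, a direct computation shows $\varphi(0)=0$, $\varphi'(0)=0$, and, after introducing the tilted probability $q(\lambda) = (1-p)\eeul^{\lambda b}/(p\eeul^{\lambda a}+(1-p)\eeul^{\lambda b})$ with $p=b/(b-a)$, the identity $\varphi''(\lambda) = (b-a)^2 q(\lambda)(1-q(\lambda)) \le (b-a)^2/4$. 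Taylor's theorem then gives $\varphi(\lambda) \le \lambda^2 (b-a)^2/8$, which is the claim.

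Combining Hoeffding's lemma with the Chernoff estimate yields
\begin{equation*}
\mathrm{Prob}(S_N > t) \le \exp\!\left(-\lambda t + \frac{\lambda^2}{8}\sum_{i=1}^N (b_i-a_i)^2\right).
\end{equation*}
Optimizing over $\lambda > 0$ by choosing $\lambda^\ast = 4t/\sum_{i=1}^N (b_i-a_i)^2$ produces the one-sided bound $\exp\!\bigl(-2t^2/\sum_{i=1}^N (b_i-a_i)^2\bigr)$, and combining with the symmetric bound for $-S_N$ gives the first inequality in the theorem. The second, special-case inequality is then immediate: with $\EE(X_i)=0$ and $|X_i|\le 1$ we may take $a_i=-1$, $b_i=1$, so $\sum_{i=1}^N (b_i-a_i)^2 = 4N$, and the exponent becomes $-2t^2/(4N) = -t^2/(2N)$, as required.
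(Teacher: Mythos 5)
Your argument is correct: the Chernoff exponential-moment bound combined with Hoeffding's lemma (proved via the convexity bound and the second-derivative estimate $\varphi''(\lambda)=(b-a)^2q(\lambda)(1-q(\lambda))\le (b-a)^2/4$), the optimization $\lambda^\ast = 4t/\sum_{i=1}^N(b_i-a_i)^2$, and the union bound for the two-sided statement all check out, as does the specialization to $a_i=-1$, $b_i=1$. Note that the paper itself gives no proof of this theorem — it is quoted from Hoeffding's original article \cite{hoe1963} — so there is nothing to compare against; your proof is the standard one (the only cosmetic remarks being that the degenerate case $b_i=a_i$ should be set aside as trivial before dividing by $b-a$, and that $a\le 0\le b$, needed for $p=b/(b-a)\in[0,1]$, follows from $\EE(Y)=0$).
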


\subsubsection*{Example: Discrepancy of random points}

This approach was used in \cite{HNWW2001} to give an explicit bound for the star discrepancy showing polynomial tractability of the star discrepancy. For different notions of tractability and their extensive studies we refer to \cite{NW08,NW10,NW12}.

\begin{theorem}[Heinrich, Novak, Wasilkowski, Wo\'zniakowski]\label{thm:dischoeff}
 For $N,s \in \NN$, there exists an $N$-element point set ${\cal P}$ in $[0,1)^s$ satisfying the discrepancy bound
 $$ D_N(\cP) \ll \left(\frac{s}{N}\right)^{1/2} ( \log s+\log N)^{1/2} .$$
\end{theorem}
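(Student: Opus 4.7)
The plan is a classical probabilistic existence proof: I will take $\bsx_1,\ldots,\bsx_N$ to be independent uniformly distributed random points in $[0,1)^s$, show that with positive probability the star discrepancy $D_N(\cP)$ is below the claimed bound, and conclude that at least one realisation achieves it. The main obstacle is that $D_N(\cP) = \sup_{\bsy\in[0,1]^s} |D_N(\cP,\bsy)|$ is a supremum over uncountably many anchors $\bsy$, so Hoeffding applied pointwise together with a union bound is not directly sufficient; a discretisation argument is required.

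\textbf{Step 1: Pointwise concentration.} Fix $\bsy\in[0,1]^s$. The random variables $X_i := \mathbf{1}_{B_\bsy}(\bsx_i) - \mathrm{vol}(B_\bsy)$ are i.i.d.\ with $\EE(X_i)=0$ and $|X_i|\le 1$, and $N\,D_N(\cP,\bsy) = \sum_{i=1}^N X_i$. Theorem~\ref{thm:hoein} yields
\begin{equation*}
\mathrm{Prob}\bigl(|D_N(\cP,\bsy)| > u\bigr) \le 2\exp\!\left(-\tfrac{N u^2}{2}\right)
\end{equation*}
for every $u>0$.

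\textbf{Step 2: Grid discretisation.} I will introduce the grid $\Gamma_M := \{0,1/M,\ldots,1\}^s$ for an integer $M$ to be chosen. For any $\bsy\in[0,1]^s$ I pick $\bsy^-,\bsy^+\in\Gamma_M$ with $\bsy^-\le\bsy\le\bsy^+$ componentwise and $y_j^+-y_j^-\le 1/M$. Since $B_{\bsy^-}\subseteq B_{\bsy}\subseteq B_{\bsy^+}$, the monotonicity of both the count and the volume gives
\begin{equation*}
D_N(\cP,\bsy^-) - \bigl(\mathrm{vol}(B_{\bsy^+})-\mathrm{vol}(B_{\bsy^-})\bigr) \le D_N(\cP,\bsy) \le D_N(\cP,\bsy^+) + \bigl(\mathrm{vol}(B_{\bsy^+})-\mathrm{vol}(B_{\bsy^-})\bigr).
\end{equation*}
Using the elementary inequality $|\prod_j a_j - \prod_j b_j|\le \sum_j |a_j-b_j|$ on $[0,1]^s$, the volume gap is at most $s/M$. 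Hence
\begin{equation*}
D_N(\cP) \le \max_{\bsy\in\Gamma_M} |D_N(\cP,\bsy)| + \frac{s}{M}.
\end{equation*}

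\textbf{Step 3: Union bound and choice of parameters.} By Step~1 and a union bound over the $(M+1)^s$ points of $\Gamma_M$,
\begin{equation*}
\mathrm{Prob}\Bigl(\max_{\bsy\in\Gamma_M} |D_N(\cP,\bsy)| > u\Bigr) \le 2(M+1)^s \exp\!\left(-\tfrac{N u^2}{2}\right).
\end{equation*}
I will choose $M$ of order $N$ (so that the grid error $s/M$ is dominated by the target bound) and then pick $u$ of the form $u = c\,\bigl(s(\log s+\log N)/N\bigr)^{1/2}$ with a sufficiently large absolute constant $c$. For such $c$ the right-hand side becomes strictly less than $1$, so with positive probability both $\max_{\bsy\in\Gamma_M}|D_N(\cP,\bsy)|\le u$ and the grid-error estimate hold simultaneously. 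Combining with Step~2 yields $D_N(\cP)\ll \bigl(s(\log s+\log N)/N\bigr)^{1/2}$ for at least one realisation, which is the claimed bound.

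\textbf{Expected difficulty.} The conceptual step is routine; the only slightly delicate bookkeeping is the bracketing argument in Step~2, where one must use both the $\bsy^-$-lower and $\bsy^+$-upper approximations and control the volume gap by $s/M$ independently of $\bsy$. Once that is in place, balancing $M$ and $u$ against the union bound is straightforward and gives the asserted dependence on $s$ and $N$.
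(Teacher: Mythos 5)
Your proposal is correct and follows essentially the same route as the paper's proof: independent uniform random points, pointwise Hoeffding, bracketing of an arbitrary anchor between the two nearest grid points (controlling the volume gap by $s/M$), and a union bound over the grid. The only difference is the choice of mesh ($M$ of order $N$ rather than the paper's $m=\lceil s/\varepsilon\rceil$), which affects nothing beyond minor bookkeeping.
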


\begin{proof}[Sketch]
 Let ${\cal P}=\{\bst_1,\dots,\bst_N\}$ where $\bst_1,\dots,\bst_N$ are independent and uniformly distributed in $[0,1)^s$.
 We want to show that
 $$ \mathrm{Prob} \left( D_N(\cP) \le 2 \varepsilon \right) > 0 $$
 where $2 \varepsilon$ is the right hand side in Theorem~\ref{thm:dischoeff}.
 That amounts to the task to show that the event
 $$ D_N(\cP,\bsx) > 2 \varepsilon \ \ \mbox{at least for one } \bsx\in[0,1)^s$$
 has a probability smaller than 1. 
 These are infinitely many constraints, but it can be shown that 
 $ D_N(\cP,\bsx) > 2 \varepsilon$ implies  $ D_N(\cP,\bsy) > \varepsilon$ for one of the points   
 in a rectangular equidistant grid $\Gamma_{m,s}$ of mesh size $\frac{1}{m}$ with $m=\lceil s/\varepsilon \rceil$.
 Actually, this holds either for the grid point directly below left or up right from $\bsx$.
 Since the grid $\Gamma_{m,s}$ has cardinality $(m+1)^s$, a union bound shows that it is enough to prove
 $$  \mathrm{Prob} \left( D_N(\cP,\bsx) >  \varepsilon \right) < (m+1)^{-s} $$
 for every $\bsx \in \Gamma_{m,s}$.
 But now  
 $$ N D_N(\cP,\bsx) = \sum_{i=1}^N \left( {\mathbf 1}_{B_{\bsx}}(\bst_i) -  \, {\rm vol} (B_{\bsx}) \right) $$
 is the sum of the $N$ random variables $X_i={\mathbf 1}_{B_{\bsx}}(\bst_i) -  \, {\rm vol} (B_{\bsx})$, which have mean 0 and
 obviously satisfy $|X_i|\le 1$.
 So we can apply Hoeffding's inequality and obtain
 $$ \mathrm{Prob} \left( D_N(\cP,\bsx) >  \varepsilon \right)  = \mathrm{Prob} \left( \left|\sum_{i=1}^N X_i \right| > N\varepsilon \right) \le 2 \exp \left( \frac{-N \varepsilon^2}{2}\right) < (m+1)^{-s}, $$
 where the last inequality is satisfied for the chosen values of the parameters.
\end{proof}

It should be mentioned that this approach can be easily improved and used to construct low-discrepancy points algorithmically. For more information we refer to the survey article \cite{Gne12} and the references therein.

\subsection{Vapnik-\v{C}ervonenkis Classes and Empirical Processes}\label{VCEP}

The behavior of the discrepancy function $D_N(\cP, \,\cdot\,)$ for a point set ${\cal P}=\{\bst_1,\dots,\bst_N\}$ with independent and uniformly distributed $\bst_1,\dots,\bst_n$ as already considered in the previous section is intimately connected with the theory of empirical processes. In particular, this yields an essential improvement of Theorem~\ref{thm:dischoeff} in \cite{HNWW2001}. Very general notions of the discrepancy function are related to empirical processes. Average discrepancies are then expectations of certain norms of such empirical processes as we explain below.

Let us first explain what an empirical process is. For a fixed integer $N$, let $X_1,\dots, X_N$ be independent and identically distributed random variables defined on the same probability space with
values in some measurable space $M$. Assume that we are given a sufficiently small class ${\cal F}$ of measurable real functions on $M$. The {\em empirical process} indexed by ${\cal F}$ is given by
$$ \alpha_N(f) = \frac{1}{\sqrt{N}} \sum_{i=1}^N \big( f(X_i) - {\mathbb E}(f(X_i)) \big) \ \ \ \mbox{ for }\ f\in {\cal F}.$$

Now let $X_i=\bst_i$  and let ${\cal F}$ be the class of functions ${\mathbf 1}_{B(x)}$ with $x\in [0,1)^s$.
Then 
$$ \alpha_N \big({\mathbf 1}_{B_{\bsx}} \big) = \sqrt{N} D_N(\cP, \bsx) $$
for $\bsx\in[0,1)^s$, so $\sqrt{N} D_N(\cP, \,\cdot\,)$ is an empirical process. The expectation of the star discrepancy is related to the expectation of the supremum of this empirical process via
$$  \sqrt{N} \, \EE(D_N(\cP)) = \EE\left(\sup_{\bsx} \left| \alpha_N \big({\mathbf 1}_{B_{\bsx}} \big) \right|\right). $$

Now Donsker's Theorem \cite{DON} from empirical process theory tells us that for any fixed $s\in\NN$ we have
$$  \sqrt{N} \, \EE(D_N(\cP)) \to \EE\left(\sup_{\bst\in [0,1]^s} | \widetilde{B}_s(\bst) |\right) $$
for $N\to\infty$. Here $\widetilde{B}_s$ refers to the $s$-dimensional pinned Brownian sheet.
It seems to be open what the value on the right hand side is for $s>1$, so also the exact determination of $\EE(D_N(\cP))$ is probably difficult.

But estimates for the supremum of empirical processes are important and available for certain classes of index sets.
One example are Vapnik-\v{C}ervonenkis classes which we introduce now.
Let $(X,\mathcal{F},\mathrm{Prob})$ be a probability space. A countable family $\mathcal{C}$ of
measurable subsets of $X$ is called a {\it Vapnik-\v{C}ervonenkis class}  (for short VC-class) if there exists a nonnegative integer $s$ such
that
$$ \# \{ A \cap C \,:\, C\in \mathcal{C} \} < 2^{s+1} $$
for any subset $A\subset X$ with $|A|= s+1$. The smallest such $s$ is called VC-dimension of $ \mathcal{C}$.

Also the discrepancy function can be generalized to this setting as follows.
The discrepancy of an $N$-element set $\cP = \{\bst_1,\ldots,\bst_N\} \subseteq X$ with respect to
$C\in \mathcal{C}$ is given as
$$ D_N(\cP,C) =  \frac{1}{N} \sum_{i=1}^N {\mathbf 1}_C(\bst_i) - \mathrm{Prob}(C).$$
Furthermore, let $$ D_N(\cP) = \sup_{C\in \mathcal{C}} \big| D_N(\cP,C) \big|.$$
If we choose for $\mathcal{C}$ the class of boxes $B_{\bsx}$ with $\bsx \in  [0,1]^s$, then we obtain 
the classical notion of the star discrepancy. Moreover, this class is a VC-class of dimension $s$, see \cite{Dud1984}.
Choosing $\bst_i=X_i$ as independent random variables identically distributed according to $\mathrm{Prob}$, we can again treat $D_N(\cP)$ as the supremum of an empirical process indexed by the VC-class $\mathcal{C}$.

The following theorem is a crucial large deviation inequality for empirical processes on VC-classes due to Talagrand \cite{tal1994} and Haussler \cite{Hau1995}.

\begin{theorem}[Talagrand, Haussler]\label{thmTH}
  There is a positive number $K$ such that for all VC-classes of dimension $s$, probabilities $\mathrm{Prob}$,  $c \ge K s^{1/2}$ and $N\in \NN$ 
	$$ \mathrm{Prob} \left( D_N(\cP) \ge c N^{-1/2} \right) \le \frac{1}{c} \left( \frac{Kc^2}{s}\right)^s \exp(-2s^2).$$
\end{theorem}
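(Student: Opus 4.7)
The plan is to combine three classical ingredients of empirical process theory: symmetrization, a combinatorial bound on how richly a VC-class can behave on a finite sample, and a sub-Gaussian concentration inequality applied conditionally, followed by a chaining or union-bound argument. The starting observation is that $D_N(\cP)=N^{-1/2}\sup_{C\in\mathcal{C}}|\alpha_N(\mathbf{1}_C)|$ is the uniform norm of an empirical process, so the tools of that theory apply.

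First I would symmetrize. Let $\bst_1',\dots,\bst_N'$ be an independent ghost sample with the same law as $\bst_1,\dots,\bst_N$, and let $\varepsilon_1,\dots,\varepsilon_N$ be independent Rademacher signs, independent of everything else. The standard Gin\'e--Zinn desymmetrization lemma gives, once $c\ge K\sqrt{s}$ is large enough that $4/(Nc^2)\le 1/2$,
$$
\mathrm{Prob}\bigl(D_N(\cP)\ge c N^{-1/2}\bigr)\le 2\,\mathrm{Prob}\!\Bigl(\sup_{C\in\mathcal{C}}\Bigl|\tfrac{1}{N}\sum_{i=1}^{N}\varepsilon_i\bigl(\mathbf{1}_C(\bst_i)-\mathbf{1}_C(\bst_i')\bigr)\Bigr|\ge \tfrac{c}{2}N^{-1/2}\Bigr),
$$
replacing the deterministic centering $-\mathrm{Prob}(C)$ by a symmetric mean-zero object whose tail can be studied conditionally on the pooled sample.

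Next I would condition on $(\bst_i,\bst_i')_{i\le N}$. By the Sauer--Shelah lemma, $\mathcal{C}$ realises at most $\sum_{k\le s}\binom{2N}{k}\ll (2eN/s)^s$ distinct traces on the pooled $2N$-point sample, because $\mathcal{C}$ has VC-dimension $s$; so the supremum above is really a maximum over that many real numbers. For each fixed trace, the sum in question is a Rademacher average of $N$ numbers of modulus at most $1/N$, so Hoeffding's inequality (Theorem~\ref{thm:hoein}) yields a conditional tail bound of order $2\exp(-c^2/8)$. A union bound followed by integration over the samples already produces a qualitative version of the theorem of the shape $(C_1N/s)^{s}\exp(-c^2/8)$.

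The main obstacle is to sharpen this qualitative estimate to the stated form, with a polynomial factor $(Kc^2/s)^s$ depending on $c$ rather than on $N$, and with the precise constant in the exponent and the $c^{-1}$ prefactor. The decisive replacement is Haussler's packing theorem in place of Sauer--Shelah: the $\varepsilon$-packing number of $\mathcal{C}$ in the pseudometric $d(C,C')=\mathrm{Prob}(C\triangle C')^{1/2}$ is bounded by $(K/\varepsilon)^{2s}$, independently of $N$. Inserting this packing bound into Dudley's chaining entropy integral, and controlling each chaining increment by a sub-Gaussian tail via Hoeffding once more, converts the crude polynomial dependence on $N$ into the correct dependence on $c^2/s$. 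Combining this with the sharp constant $2$ in the Hoeffding exponent and optimising over the chaining levels produces the claimed bound; extracting the $c^{-1}$ prefactor is the most delicate step, and is exactly where the full Talagrand--Haussler chaining argument is needed rather than a naive union bound.
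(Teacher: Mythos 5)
The paper does not prove this theorem; it is quoted as a black box from Talagrand \cite{tal1994} and Haussler \cite{Hau1995}, so your proposal has to stand on its own. As a map of the relevant machinery it is accurate: symmetrization, Sauer--Shelah, Haussler's packing bound and Dudley-type chaining are indeed the standard ingredients, and they do yield a uniform deviation bound of the qualitative shape $(Kc^2/s)^s\exp(-\gamma c^2)$ for some absolute $\gamma>0$. But there is a genuine gap, and it is not just the bookkeeping you defer in your last sentence: the route you set up cannot produce the sharp constant in the exponent. Symmetrization forces you to work at the halved threshold $cN^{-1/2}/2$, and your own conditional Hoeffding computation then gives $2\exp(-c^2/8)$ per trace; no amount of chaining over Haussler's packing repairs the exponent, since chaining only sharpens the polynomial prefactor from $(C_1N/s)^s$ to $(Kc^2/s)^s$. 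The constant the theorem asserts is $2$ --- note that the $\exp(-2s^2)$ printed in the statement must be a typo for $\exp(-2c^2)$, since otherwise the right-hand side grows like $c^{2s-1}$ in $c$ and the claim is vacuous for large $c$; this is also the form used in \cite{HNWW2001}. That constant matches the sharp one-dimensional Dvoretzky--Kiefer--Wolfowitz/Massart bound, which one gets by applying Hoeffding \emph{directly} to $\sum_i(\mathbf{1}_C(\bst_i)-\mathrm{Prob}(C))$ for a single fixed $C$; Talagrand's contribution is to preserve that single-set constant uniformly over the class while paying only the polynomial factor $c^{-1}(Kc^2/s)^s$. His argument does not run through symmetrization at the top level but through a concentration-of-measure analysis in which the chained Rademacher process controls only a lower-order correction. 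So your plan names the right lemmas but assembles them in an order that provably loses a constant factor of $16$ in the exponent; the step you call ``the most delicate'' is in fact a structurally different argument, not a refinement of the one you describe.
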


Using this estimate instead of Hoeffding's inequality as in the previous section, one arrives at the following sharpening of Theorem \ref{thm:dischoeff}
also proved in \cite{HNWW2001}.

\begin{theorem}[Heinrich, Novak, Wasilkowski, Wo\'zniakowski]\label{thm:discVC}
 For $N,s \in \NN$, there exists an $N$-element point set ${\cal P}$ in $[0,1)^s$ satisfying the discrepancy bound
 $$ D_N(\cP) \ll \left(\frac{s}{N}\right)^{1/2}.$$
\end{theorem}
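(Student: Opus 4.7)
The plan is to follow the same probabilistic template as the proof of Theorem~\ref{thm:dischoeff}, but to replace the Hoeffding/union-bound step over a grid by a single application of the Talagrand--Haussler inequality (Theorem~\ref{thmTH}) to the full class of axis-aligned boxes.

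First I would take $\cP = \{\bst_1,\ldots,\bst_N\}$, where $\bst_1,\ldots,\bst_N$ are independent and uniformly distributed in $[0,1)^s$. As already noted in the excerpt, Dudley's result identifies the class $\mathcal{C} = \{B_{\bsx} : \bsx \in [0,1]^s\}$ as a VC-class of dimension $s$, and the star discrepancy $D_N(\cP)$ coincides with $\sup_{C\in\mathcal{C}}|D_N(\cP,C)|$ as defined in Section~\ref{VCEP}. Thus the hypotheses of Theorem~\ref{thmTH} are met with the probability measure $\mathrm{Prob}$ equal to Lebesgue measure on $[0,1)^s$.

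Next, I would plug $c = c_0\sqrt{s}$ into Theorem~\ref{thmTH}, where $c_0$ is an absolute constant to be fixed later with $c_0 \ge K$. The theorem then gives
$$\mathrm{Prob}\bigl(D_N(\cP) \ge c_0\sqrt{s/N}\bigr) \;\le\; \frac{1}{c_0\sqrt{s}}\bigl(K c_0^2\bigr)^{s} \exp(-2s^2) \;=\; \frac{1}{c_0\sqrt{s}}\exp\bigl(s\log(Kc_0^2) - 2s^2\bigr).$$
Because the quadratic term $-2s^2$ dominates the linear term $s\log(Kc_0^2)$ once $s$ exceeds some absolute threshold $s_0 = s_0(c_0,K)$, the right-hand side is strictly less than $1$ for all $s \ge s_0$. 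Consequently, for every such $s$ and every $N$ there exists a deterministic realization $\cP \subset [0,1)^s$ with $D_N(\cP) < c_0\sqrt{s/N}$.

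The finitely many remaining dimensions $s < s_0$ can be absorbed into the implied constant, since in that regime the claim $D_N(\cP) \ll \sqrt{s/N}$ reduces to $D_N(\cP) \ll 1/\sqrt{N}$, which is easily achieved (e.g.\ via a tensor product of a one-dimensional low-discrepancy sequence, or by reusing Theorem~\ref{thm:dischoeff} for bounded $s$). I expect the main obstacle to be the bookkeeping of absolute constants: one has to check that the cubic gap between $2s^2$ and $s\log(Kc_0^2)$ can be opened up by a choice of $c_0$ that simultaneously satisfies $c_0 \ge K$, so that Theorem~\ref{thmTH} is actually applicable. This is precisely the point where the VC-argument beats the Hoeffding argument of Theorem~\ref{thm:dischoeff}: the VC-bound encodes the combinatorial complexity of $\mathcal{C}$ intrinsically and thereby avoids the grid-induced factor $(m+1)^s$ that was responsible for the spurious $\sqrt{\log s + \log N}$ in Theorem~\ref{thm:dischoeff}.
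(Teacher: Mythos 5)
Your proposal is correct and follows exactly the route the paper intends: the text only remarks that one replaces Hoeffding's inequality by the Talagrand--Haussler bound (Theorem~\ref{thmTH}), and your computation with $c=c_0\sqrt{s}$, together with the separate treatment of the finitely many dimensions $s<s_0$ where the exponent $s\log(Kc_0^2)-2s^2$ need not be negative, is a faithful fleshing-out of that remark. One small caveat: for the bounded range $s<s_0$ you should fall back on explicit low-discrepancy constructions (which give $D_N(\cP)\ll_s (\log N)^{s-1}/N\ll 1/\sqrt{N}$, with the implied constant absorbed over the finitely many $s$) rather than on Theorem~\ref{thm:dischoeff}, whose bound retains a $\sqrt{\log N}$ factor and is therefore not $\ll\sqrt{s/N}$ uniformly in $N$.
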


For a version with an explicit constant in this inequality we refer to \cite{ais2011}, for a lower bound for arbitrary sets to \cite{hin2004}, and for
a corresponding lower bound of the expectation of the star discrepancy of a random point set to \cite{doe2014}. A standard reference for empirical processes
is \cite{vdVW1996}. 

Donsker's Theorem also yields that for any fixed $s\in\NN$ we have
$$  \sqrt{N} \, \left( \EE(D_N(\cP,L_p)^p) \right)^{1/p} \to \left( \EE\left(\int_{[0,1]^s} | \widetilde{B}_s(\bst) |^p \right) \dint \bst \right)^{1/p}$$ 
for $N\to\infty$ and $0<p<\infty$. The speed of convergence of average $L_p$-discrepancies on the left hand side and quantitative estimates for fixed $N$  
are studied in \cite{HW2012,ste2010}.

\vspace{1cm}

\noindent {\bf Acknowledgments.} The authors would like 
to thank three anonymous referees for suggestions and comments.

\vspace{1cm}
\noindent{\bf Author's Addresses:}\\

\noindent Josef Dick, School of Mathematics and Statistics, The University of New South Wales, Sydney, NSW 2052, Australia.  Email: josef.dick(at)unsw.edu.au \\

\noindent Aicke Hinrichs, Institut f\"ur Mathematik, Universit\"at Rostock , Ulmenstra{\ss}e 69, D-18051 Rostock, Germany. Email: aicke.hinrichs(at)uni-rostock.de\\

\noindent Friedrich Pillichshammer, Institut f\"{u}r Finanzmathematik, Johannes Kepler Universit\"{a}t Linz, Altenbergerstra{\ss}e 69, A-4040 Linz, Austria. Email: friedrich.pillichshammer(at)jku.at

\label{lastpage}

\end{document}